\DeclareMathSymbol{\sm}{\mathbin}{AMSa}{"39}
\newcommand{\sbl}{[}
\newcommand{\sbr}{]}
\newcommand{\n}{\delta_{\pi, \mu}}
\newtheorem{thm}{Theorem}
\newtheorem{lem}{Lemma}
\newtheorem{prop}{Proposition}
\newtheorem{cor}{Corollary}
\theoremstyle{definition}
\newtheorem{defn}{Definition}
\newtheorem{exam}{Example}
\theoremstyle{remark}
\newtheorem{rem}{Remark}
\title{A Poisson bracket on the space of Poisson structures}
\author{Thomas Machon}
\address{H.H.~Wills Physics Laboratory, Tyndall Avenue, Bristol BS8 1TL, UK}
\email{t.machon@bristol.ac.uk}
\begin{document}

\begin{abstract}
Let $M$ be a smooth, closed, orientable manifold and $\mathcal{P}(M)$ the space of Poisson structures on $M$. We construct a Poisson bracket for a class of admissible functions on $\mathcal{P}(M)$, depending on a choice of volume form for $M$. The Hamiltonian flow of the bracket acts on $\mathcal{P}(M)$ by volume-preserving diffeomorphisms of $M$, corresponding to exact gauge transformations. Fixed points of the flow equation define a sub-algebra of Poisson vector fields, which are computed for Poisson structures on 2 and 3-manifolds. On the space of symplectic manifolds with a symplectic volume form (up to scaling) we define a further, related Poisson bracket and show that the behaviour of the induced flow on symplectic structures is described naturally in terms of the $d d^\Lambda$ and $d+ d^\Lambda$ symplectic cohomology groups defined by Tseng and Yau\cite{tseng2012cohomology}.
\end{abstract}
\maketitle

\section{Introduction and summary of the construction}

This paper is concerned with the collection of all distinct Poisson brackets that can be defined on a given smooth, closed, orientable manifold $M$.  We show that the space of all Poisson brackets (or Poisson structures) on $M$, denoted $\mathcal{P}(M)$, has, itself, a family of Poisson brackets, $\{\cdot, \cdot\}_\mu$, depending on a choice of volume form $\mu$ for $M$. The bracket $\{\cdot, \cdot\}_\mu$ is defined on a particular class of admissible functions on $\mathcal{P}(M)$. It is non-linear, depending cubically on the Poisson tensor.

Along with a choice of admissible function on $\mathcal{P}(M)$ acting as a Hamiltonian, the bracket $\{\cdot, \cdot \}_\mu$ induces a Hamiltonian flow on the space of Poisson structures, which acts to deform any given Poisson structure, $\pi$, on $M$. This flow is formulated as a PDE on $M$, and acts by volume-preserving diffeomorphisms. The vector fields generating these diffeomorphisms lie tangent to the (singular) foliation $\mathcal{F}_\pi$ defined by $\pi$, and so equivalently the flow on $\mathcal{P}(M)$ acts by exact gauge transformations. While $\mathcal{F}_\pi$ is invariant under the flow of the bracket, its global properties nevertheless play a role. For example, the additional structure carried by a Poisson manifold with a volume form $\mu$ is the modular vector field~\cite{koszul1985crochet, weinstein1997modular, kosmann2008poisson}, denoted $\phi_\mu$. We show that Poisson structure on a 3-manifold appears as a non-trivial steady state in the flow equation if and only if it is unimodular ($\phi_\mu$ is Hamiltonian), equivalent to the condition that $\mathcal{F}_\pi$ can be defined by a closed 1-form.

More generally, a non-trivial steady solution of the flow equation is given by a Poisson vector field, and we show that the set of all such steady solutions forms a subalgebra of Poisson vector fields with respect to the Lie bracket. This subalgebra is an invariant of the Poisson structure; it is the space of Poisson vector fields that arise from Hamiltonian flow of the bracket $\{\cdot, \cdot\}_\mu$, and we compute it for two and three-dimensional Poisson manifolds.

The set of unimodular Poisson structures for which the modular vector field $\phi_\mu$ vanishes is a Poisson subspace of the bracket $\{\cdot, \cdot \}_\mu$. This allows us to define a further, related Poisson bracket on the space of symplectic structures on $M$ whose symplectic volume form is equal to $\mu$ (up to a constant factor). Deformations of these symplectic structures under the flow of this bracket are naturally described in terms of the $d+d^\Lambda$ and $dd^\Lambda$ symplectic cohomology groups defined by Tseng and Yau~\cite{tseng2012cohomology}.

The motivation for this work came from studying the Lie-Poisson bracket of ideal fluid hydrodynamics~\cite{morrison1998hamiltonian,arnold1999topological} on 1-forms modulo exact forms, which is associated to the group of volume-preserving diffeomorphisms of the manifold $M$. This bracket restricts naturally to the space of {\em integrable} 1-forms modulo exact forms~\cite{machon2020godbillon}. An integrable 1-form and volume form on a 3-manifold together define a Poisson structure, hence one finds a Poisson bracket on Poisson structures for 3-manifolds. The present work is the result of trying to extend the idea of a `Poisson bracket on the space of Poisson structures' to arbitrary Poisson manifolds.

I would like to acknowledge contributions of the two anonymous referees, whose comments and corrections greatly improved the manuscript.

\subsection{Summary of the construction}
Let $\pi \in A^2(M)$ be an integrable 2-vector field defining a Poisson structure on a closed manifold $M$, and let $\mathcal{P}(M)$ denote the space of Poisson structures on $M$. Define the differential on forms $\n: \Omega^p(M) \to \Omega^{p-1}(M)$ as the adjoint of the Lichnerowicz differential~\cite{lichnerowicz1977varietes} in Poisson cohomology $X \mapsto [\pi, X]$ with respect to the natural pairing between $p$-forms $\beta$ and $p$-vectors defined by a choice of volume form $\mu$,
$$( \beta, [\pi, X])_\mu = \int_M (\iota_{[\pi, X]} \beta) \mu = \int_M (\iota_X \n \beta) \mu = (\n \beta, X)_\mu.$$
Now let $F: A^2(M) \to \mathbb{R}$ be a function on 2-vector fields that can be written as an integral
\begin{equation} \label{eq:f1}
F(\pi) = \int_M f( j_r \pi) \mu
\end{equation}
of some smooth function $f: J_r (\Lambda^2 TM ) \to \mathbb{R}$, for arbitrary finite $r$. The derivative of $F$ with respect to a deformation of the Poisson structure will be a 2-form $\beta_{F, \pi}$, so that
$$ \left .\frac{d F}{dt} \right |_{t=0} =  ( \beta_{F, \pi} ,  \dot{\pi} )_\mu, $$
where $ \dot{\pi} = (d\pi/dt)|_{t=0}$. Then for two such functions $F$ and $G$ we define the bracket
\begin{equation} \label{eq:opo}
\{F, G \}_\mu = (\n \beta_{F, \pi} \wedge \n \beta_{G, \pi}, \pi )_\mu,
\end{equation}
which is once again a function of the form \eqref{eq:f1}. If $F$ is replaced with another function which agrees with $F$ when restricted to $\mathcal{P}(M)$, the value of the bracket is unchanged, becoming a well-defined operation on $\mathcal{P}(M)$. In fact, \eqref{eq:opo} defines a Poisson bracket on the space of admissible functions on $\mathcal{P}(M)$ (defined precisely in Section~\ref{sec:admiss}).

Given an admissible function $H$, playing the role of Hamiltonian, the bracket \eqref{eq:opo} generates a flow on the space of Poisson structures, which acts by gauge transformations by the exact 2-form $\gamma_t$ with $d\gamma_t / dt = d \n \beta_{H, \pi}$. Considered as a diffeomorphism, the deformation of $\pi$ is given by flow along the vector field $V_{H,\pi} = \pi^\sharp(\n \beta_{H, \pi})$, which preserves the the volume form $\mu$. Of interest are the Poisson vector fields arising this way {\it i.e}.\ functions $F$ for which $[V_{F,\pi}, \pi]=0$. These form a subalgebra, for two functionals $F$ and $G$ giving Poisson vector fields, $V_{\{F, G\}_\mu}$ is also Poisson, and we have
 $$ [V_{F,\pi}, V_{G,\pi}] = V_{\{F,G\}_\mu }, $$
which can be compared with the relation $[H_f, H_g]  = H_{{\{f,g\}}_\pi}$ for Hamiltonian vector fields of functions $f$ and $g$. We compute this set of Poisson vector fields for Poisson 2-manifolds, where it is characterized by functions $f$ satisfying $\{f, \iota_\pi \mu\}_\pi = 0$, where $\{\cdot, \cdot\}_\pi$ is the Poisson bracket on $M$ defined by $\pi$. On regular Poisson 3-manifolds we show that the set of Poisson vector fields arising from the bracket \eqref{eq:opo} is non-trivial if and only if the Poisson structure is unimodular. 

Several subsets of Poisson structures arise as `Poisson subspaces' of the bracket \eqref{eq:opo}. We consider in detail the subspace $\mathcal{S}_\mu(M)$ of symplectic Poisson structures for which $d \iota_\pi \mu = 0$. Because the deformation of symplectic Poisson structures is unobstructed in Poisson cohomology (see Ref.~\cite{da1999geometric} Section 18.6) we can give a more complete characterization of the bracket in terms of operations on appropriately defined tangent and cotangent spaces to $\mathcal{S}_\mu(M)$. In this case the flow equation of the bracket becomes
$$ \partial_t \omega = d d^\Lambda \beta_{H, \pi},$$
where $\omega$ is the symplectic form defined by $\pi$. $d^\Lambda = [d,\Lambda]$, with $\Lambda$ the dual Lefschetz operator, is the symplectic derivative (see Ref.~\cite{merkulov1998formality} Section 1). In this context the finite-dimensional $d d^\Lambda$ and $d+d^\Lambda$ symplectic cohomology groups defined by Tseng and Yau
~\cite{tseng2012cohomology} arise naturally. $H^2_{d+d^\Lambda}$ characterizes deformations of symplectic structures in $\mathcal{S}_\mu(M)$ that preserve the symplectic volume form, modulo those arising from the flow of the bracket on $\mathcal{S}_\mu (M)$. In the dual picture the group $H^{2}_{d d^\Lambda}$ characterizes distinct elements of the cotangent space of $\mathcal{S}_\mu (M)$ that yield symplectic vector fields.

\section{Preliminaries}

Throughout, $M$ will be a smooth, closed, orientable manifold of dimension $n$, and we will use `manifold' as such. Let $A(M)$ be the space of multivector fields, and $\Omega(M)$ the space of differential forms on $M$. A Poisson structure on $M$ is specified by a 2-vector $\pi \in A^2(M)$ satisfying the integrability condition
\begin{equation} \label{eq:intcond}
[ \pi, \pi ] = 0,
\end{equation}
where $[\cdot, \cdot]$ is the Schouten-Nijenhuis bracket (see Refs.\cite{laurent2012poisson,vaisman2012lectures} for more details on Poisson structures). The Poisson bracket determined by $\pi$ is given by $\{f,g\}_\pi = \pi(df , dg)$. A Poisson structure $\pi$ defines a (possibly singular) foliation $\mathcal{F}_\pi$ of $M$, whose leaves carry a symplectic form. $\pi$ also defines the cotangent Lie algebroid $(M,[\cdot, \cdot]_\pi, \pi^\sharp)$ on $T^\ast(M)$ (see Ref.~\cite{da1999geometric}, Section 17) with the corresponding Lie bracket $[\cdot, \cdot]_\pi$ on 1-forms
$$ [\alpha, \beta ]_\pi = \mathcal{L}_{\pi^\sharp \alpha} \beta -  \mathcal{L}_{\pi^\sharp \beta} \alpha - d \pi(\alpha , \beta),$$
where the anchor map $\pi^\sharp: T^\ast M \to TM$ is given by $\alpha \mapsto \pi(\alpha, \cdot)$. The bracket $[\cdot, \cdot]_\pi$ can be rewritten as the Koszul bracket (Ref.~\cite{kosmann2008poisson}  Section 1.4, Ref.~\cite{koszul1985crochet} Section 3)
\begin{equation} \label{eq:kos}
[\alpha, \beta ]_\pi = (\delta_{KB} \alpha) \beta -  (\delta_{KB} \beta) \alpha - \delta_{KB}(\alpha \wedge \beta)
\end{equation}
where $\delta_{KB} = [\iota_\pi, d]$ is the Koszul-Brylinski differential and $[\cdot, \cdot ]$ is the graded commutator of linear endomorphisms on $\Omega (M)$. The interior product $\iota_X \alpha : A^p(M) \times \Omega^q(M) \to \Omega^{q-p}(M)$ will act on the right, so that for vector fields $X$ and $Y$, $\iota_Y \iota_X \alpha = \iota_{X \wedge Y} \alpha$.
\begin{prop}
The Koszul-Brylinski differential can be `twisted' by the addition of a Poisson vector field $V$ (so $[V, \pi]=0$) as $\delta_{KB}+ \iota_V$.
\end{prop}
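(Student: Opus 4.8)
The plan is to verify that the proposed twisting is a genuine differential, namely that $(\delta_{KB} + \iota_V)^2 = 0$, and to see that the Poisson condition $[V,\pi]=0$ is precisely what is needed. Both $\delta_{KB} = [\iota_\pi, d]$ and $\iota_V$ are odd operators on $\Omega(M)$ (each of degree $-1$), so I would first expand the square in terms of the graded commutator,
\begin{equation*}
(\delta_{KB} + \iota_V)^2 = \delta_{KB}^2 + [\delta_{KB}, \iota_V] + \iota_V^2,
\end{equation*}
reducing the claim to a control of the three terms separately.

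The two pure terms vanish for elementary reasons. We have $\delta_{KB}^2 = 0$ by the standard fact that the Koszul-Brylinski operator squares to zero exactly because $\pi$ is Poisson, $[\pi,\pi]=0$; and $\iota_V^2 = \iota_{V\wedge V} = 0$, since $V \wedge V = 0$ for a vector field. All of the obstruction therefore sits in the cross term $[\delta_{KB}, \iota_V]$, and this is where the real work lies.

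To handle the cross term I would invoke the generalized Cartan identity of the multivector calculus, which expresses the Schouten-Nijenhuis bracket through the graded commutators of interior products with $d$: for multivector fields $P,Q$ one has $[[\iota_P, d], \iota_Q] = \pm\, \iota_{[P,Q]}$. This can be cited directly or proved by a short induction on the degrees of $P$ and $Q$, the inductive step resting on the Leibniz rule for the Schouten bracket together with the ordinary Cartan relations for $d$ and $\iota$. Taking $P=\pi$ and $Q=V$ yields $[\delta_{KB}, \iota_V] = \pm\,\iota_{[\pi,V]}$, which is zero precisely when $V$ is a Poisson vector field. Combining the three terms gives $(\delta_{KB}+\iota_V)^2 = 0$, so $\delta_{KB}+\iota_V$ is indeed a differential, as claimed.

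The only real obstacle is fixing the sign and conventions in this Cartan identity; since, however, it multiplies $\iota_{[\pi,V]}=0$, the conclusion is insensitive to any such choice. As a closing remark justifying the term ``twisting,'' I would observe that $\iota_V$ is a derivation of the wedge product, so its contribution to the bracket formula \eqref{eq:kos} cancels identically; hence $\delta_{KB}+\iota_V$ generates the same Koszul bracket $[\cdot,\cdot]_\pi$ on $1$-forms while genuinely deforming the differential, and thus its cohomology.
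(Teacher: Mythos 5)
Your proof is correct and takes essentially the same route as the paper: both reduce $(\delta_{KB}+\iota_V)^2=0$ to the Cartan formula $\iota_{[P,Q]}=[[\iota_P,d],\iota_Q]$ applied with $P=\pi$, $Q=V$, so that the cross term is $\iota_{[\pi,V]}$, vanishing exactly when $V$ is Poisson. Your write-up merely makes explicit the two vanishing terms $\delta_{KB}^2=0$ (from $[\pi,\pi]=0$) and $\iota_V^2=0$, which the paper leaves implicit.
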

\begin{proof}
We require $(\delta_{KB}+ \iota_V)^2=0$.
Using Cartan's formula  (see Ref.~\cite{laurent2012poisson} Proposition 3.6)
\begin{equation} \label{eq:cartan}
\iota_{\sbl P, Q \sbr} = [[ \iota_P, d], \iota_Q],
\end{equation}
shows that this is the case if and only if $V$ is a Poisson vector field.
\end{proof}
The differential $\n$ we define in Section~\ref{sec:diff} is such a twisted Koszul-Brylinski differential. Since the interior product acts as a derivation on $\Omega(M)$, $\delta_{KB}$ may be replaced in \eqref{eq:kos} with any twisted differential. We can then define a natural subset of $\Omega^1(M)$ closed under $[\cdot, \cdot]_\pi$.
\begin{lem} \label{prop:kos}
For a  Koszul-Brylinski differential $\delta = \delta_{KB} + \iota_V$, twisted by the Poisson vector field $V$, the 1-forms $\delta \Omega^2(M) \subset \Omega^1(M)$ are closed under the Lie algebroid bracket $[\cdot, \cdot]_\pi$ with explicit formula 
$$[ \delta \alpha, \delta \beta]_\pi = - \delta (\delta \alpha \wedge \delta \beta).$$
\end{lem}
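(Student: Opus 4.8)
The plan is to prove the formula $[\delta\alpha, \delta\beta]_\pi = -\delta(\delta\alpha \wedge \delta\beta)$ by substituting the two $\delta$-exact 1-forms directly into the Koszul formula \eqref{eq:kos} and exploiting the fact that $\delta$ is a differential, i.e.\ $\delta^2 = 0$. Writing $\eta = \delta\alpha$ and $\zeta = \delta\beta$, the Koszul bracket reads
$$[\eta, \zeta]_\pi = (\delta\eta)\,\zeta - (\delta\zeta)\,\eta - \delta(\eta \wedge \zeta).$$
Since $\eta = \delta\alpha$ and $\zeta = \delta\beta$ are themselves in the image of $\delta$, the first two terms involve $\delta\eta = \delta^2\alpha$ and $\delta\zeta = \delta^2\beta$.

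\medskip

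The key step is therefore to argue that $\delta\eta = \delta\zeta = 0$. By the Proposition immediately preceding this lemma, the twisted Koszul--Brylinski differential $\delta = \delta_{KB} + \iota_V$ squares to zero precisely because $V$ is a Poisson vector field; that is exactly the hypothesis under which we are working. Hence $\delta\eta = \delta^2\alpha = 0$ and likewise $\delta\zeta = \delta^2\beta = 0$, so the first two terms of the Koszul bracket vanish identically. What survives is $[\eta, \zeta]_\pi = -\delta(\eta \wedge \zeta) = -\delta(\delta\alpha \wedge \delta\beta)$, which is exactly the claimed identity. This simultaneously establishes that $\delta\Omega^2(M)$ is closed under $[\cdot,\cdot]_\pi$, since the right-hand side $-\delta(\delta\alpha\wedge\delta\beta)$ is manifestly again in the image of $\delta$.

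\medskip

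Two preliminary checks justify the computation. First, one should confirm that the Koszul formula \eqref{eq:kos} remains valid when $\delta_{KB}$ is replaced throughout by the twisted differential $\delta$; the excerpt already flags this, noting that because the interior product $\iota_V$ acts as a derivation on $\Omega(M)$, the derivation identity underlying \eqref{eq:kos} is unaffected by the twist, so $\delta$ may be substituted for $\delta_{KB}$ in the formula. Second, one should verify the degree bookkeeping: $\alpha, \beta \in \Omega^2(M)$ give $\eta, \zeta \in \Omega^1(M)$, so $\eta \wedge \zeta \in \Omega^2(M)$ and $\delta(\eta\wedge\zeta) \in \Omega^1(M)$, matching the expected degree of a bracket of 1-forms.

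\medskip

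I do not expect a substantial obstacle here: once the substitution into the Koszul formula is made and the nilpotency $\delta^2 = 0$ (supplied by the preceding Proposition) is invoked, the result is essentially immediate. The only point demanding any care is ensuring that the derivation property genuinely licenses replacing $\delta_{KB}$ by $\delta$ in \eqref{eq:kos}, since that formula is the defining expression for the bracket in Koszul form; verifying that the added derivation $\iota_V$ does not disturb the identity is the one place where a line of justification, rather than pure substitution, is needed.
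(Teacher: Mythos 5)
Your proof is correct and follows essentially the same route as the paper, which likewise substitutes the $\delta$-exact 1-forms into \eqref{eq:kos} with $\delta$ in place of $\delta_{KB}$ (justified by $\iota_V$ being a derivation) and kills the first two terms via $\delta^2=0$ from the preceding Proposition. Your write-up simply spells out the degree bookkeeping and the replacement argument that the paper leaves implicit.
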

\begin{proof}
This follows by replacing $\delta_{KB}$ with $\delta$ in \eqref{eq:kos} and using the fact that $\delta^2=0$ since $V$ is Poisson.
\end{proof}
Finally, a frequently occurring object in the construction is the modular vector field~\cite{weinstein1997modular} $\phi_\mu$, defined by the equation
$$d \iota_\pi \mu = \iota_{\phi_\mu} \mu.$$
The modular vector field $\phi_\mu$ preserves both the volume form and the Poisson structure, {i.e.} $\mathcal{L}_{\phi_\mu} \pi = [ \phi_\mu , \pi ] = 0$ and ${\rm div} \phi_\mu = 0$. A Poisson structure is unimodular if there is a volume form making the modular vector field vanish. If the modular vector field vanishes for the volume form $\mu$, we say the Poisson structure is $\mu$-unimodular. 

\section{The differential $\n$} \label{sec:diff}
The Poisson bracket we construct relies heavily on the differential $\n$. To define $\n$ we first note that the volume form $\mu$ defines a pairing between any $p$-form $\beta$ and $p$-vector field $X$ given by
\begin{equation} \label{eq:pairing1}
( \beta, X)_\mu = \int_M (\iota_X \beta) \mu.
\end{equation} 
\begin{defn}
$\n: \Omega^\ast(M) \to \Omega^{\ast -1}(M)$ is the adjoint of the Lichnerowicz differential in Poisson cohomology~\cite{lichnerowicz1977varietes}, $X \mapsto [\pi, X]$, on multivectors with respect to the pairing \eqref{eq:pairing1}.
\end{defn}
An explicit formula for $\n$ can be given in terms of the Koszul-Brylinski differential $\delta_{KB} = [\iota_\pi, d]$.
\begin{lem}
The differential $\n$ is given by
\begin{equation} \label{eq:diff}
 \n = (-1)^p(\delta_{KB} - \iota_{\phi_\mu}),
 \end{equation}
\end{lem}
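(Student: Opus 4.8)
The plan is to verify the adjoint property that \emph{defines} $\n$ and read off \eqref{eq:diff} from it. Concretely, writing $X$ for a $(p-1)$-vector and $\beta$ for a $p$-form (so that $[\pi,X]$ is a $p$-vector pairing with $\beta$), I must show
$$(\beta, [\pi, X])_\mu = \left( (-1)^p(\delta_{KB} - \iota_{\phi_\mu})\beta,\, X\right)_\mu .$$
The engine is Cartan's formula \eqref{eq:cartan} with $P=\pi$, $Q=X$, which gives $\iota_{[\pi,X]} = [\delta_{KB}, \iota_X]$. First I would expand this graded commutator, using that $\delta_{KB}$ has degree $-1$ and $\iota_X$ degree $-(p-1)$, to obtain
$$\iota_{[\pi,X]}\beta = \delta_{KB}(\iota_X \beta) - (-1)^{p-1}\iota_X(\delta_{KB}\beta),$$
and then integrate against $\mu$. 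The second term is already in the desired shape: it contributes $-(-1)^{p-1}(\delta_{KB}\beta, X)_\mu$, i.e.\ the $\delta_{KB}$ part of the answer.

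The substantive step is the first term $\int_M \delta_{KB}(\iota_X\beta)\,\mu$. Here $\sigma := \iota_X\beta$ is a $1$-form, so $\iota_\pi\sigma=0$ and $\delta_{KB}\sigma = \iota_\pi d\sigma$ collapses to a single piece. I would then invoke the elementary identity $(\iota_\pi\omega)\,\mu = \omega\wedge\iota_\pi\mu$, valid for any $2$-form $\omega$ and the bivector $\pi$ (checked in a local coframe), to rewrite $\int_M \iota_\pi(d\sigma)\,\mu = \int_M d\sigma\wedge\iota_\pi\mu$. Since $M$ is closed, Stokes' theorem applied to $d(\sigma\wedge\iota_\pi\mu)$ transfers the derivative onto the volume factor, giving $\int_M \sigma\wedge d\iota_\pi\mu$, and the defining equation $d\iota_\pi\mu = \iota_{\phi_\mu}\mu$ of the modular vector field converts this into $\int_M \sigma\wedge\iota_{\phi_\mu}\mu$. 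This is precisely the point at which $\phi_\mu$ enters the formula.

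To finish, I would use $\sigma\wedge\iota_{\phi_\mu}\mu = (\iota_{\phi_\mu}\sigma)\,\mu$ (a consequence of $\iota_{\phi_\mu}(\sigma\wedge\mu)=0$), and then commute $\iota_{\phi_\mu}$ past $\iota_X$ via the right-action convention, $\iota_{\phi_\mu}\iota_X\beta = \iota_{X\wedge\phi_\mu}\beta = (-1)^{p-1}\iota_X(\iota_{\phi_\mu}\beta)$. Reassembling the two contributions yields
$$(\beta,[\pi,X])_\mu = (-1)^{p-1}\int_M \iota_X\!\left[(\iota_{\phi_\mu} - \delta_{KB})\beta\right]\mu = (-1)^p\left((\delta_{KB} - \iota_{\phi_\mu})\beta,\, X\right)_\mu,$$
which is exactly \eqref{eq:diff}. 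The main obstacle is bookkeeping rather than ideas: the two independent sign sources — the Koszul grading in the commutator and the right-action convention for $\iota$ — must be tracked consistently, and one must keep straight that the $p$ in \eqref{eq:diff} is the \emph{form degree} of the argument (so that $X$ is a $(p-1)$-vector); making these signs cancel to produce the stated overall factor is the only delicate point.
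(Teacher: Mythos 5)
Your proof is correct and is essentially the paper's own argument: both run on Cartan's formula \eqref{eq:cartan}, Stokes' theorem on the closed manifold, and the defining relation $d\iota_\pi\mu = \iota_{\phi_\mu}\mu$, with identical sign bookkeeping for the right-action convention on interior products. The only difference is where the commutator $[\delta_{KB},\iota_X]$ is expanded --- the paper applies it to $\mu$ (so $\phi_\mu$ enters via $\delta_{KB}\mu = -\iota_{\phi_\mu}\mu$) and then integrates by parts to move the derivatives onto $\beta$, whereas you apply it to $\beta$ and integrate the single leftover term $\int_M \delta_{KB}(\iota_X\beta)\,\mu$ by parts --- a purely organizational variation that reaches the same signs and the same formula.
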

\begin{proof}
$$\left ( \beta, \sbl \pi, X \sbr  \right )_\mu  = \int_M \iota_{\sbl \pi, X \sbr} \beta \mu = \int_M \beta\wedge \iota_{\sbl \pi, X \sbr}  \mu .$$
Using Cartan's formula \eqref{eq:cartan}, this is given by
$$ \int_M \beta \wedge \iota_{\pi} d \iota_X \mu - \beta \wedge d \iota_{\pi} \iota_X \mu + (-1)^{p-1} \beta \wedge \iota_X \iota_{\phi_\mu} \mu  .$$
Now for $\alpha \in \Omega^p(M)$, $\beta \in \Omega^{n-p+q}(M)$, $X \in A^q(M)$, $q \leq p$, we have
$$ \iota_X \alpha \wedge \beta = (-1)^{(p+1)q} \alpha \wedge \iota_X \beta. $$
Using Stokes' theorem and the above formula on each of the three terms yields 
$$ \int_M  (-1)^{p+1}\iota_X d \iota_{\pi}\beta \wedge \mu  -(-1)^{p+1}\iota_X \iota_\pi d \beta \mu + (-1)^{p-1}\iota_X \iota_{\phi_\mu} \beta  \mu.  $$
\end{proof}
It follows immediately from Lemma~\ref{prop:kos} that the 1-forms $\delta_{KB} \Omega^2(M)$ are closed under the bracket $[\cdot, \cdot]_\pi$. Of use will be the observation that the differential $\n$ can be used to compute the divergence of a vector field $\pi^\sharp(\alpha)$.
\begin{lem} \label{lem:div}
The divergence of the vector field $\pi^\sharp (\alpha)$ with respect to the volume form $\mu$ is given by
$${\rm div}\, \pi^\sharp (\alpha) = \n \alpha.$$
\end{lem}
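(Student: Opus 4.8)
The plan is to reduce everything to the defining relation ${\rm div}\,V\cdot\mu = \mathcal{L}_V\mu = d\iota_V\mu$ for the divergence of a vector field $V$, and then to rewrite $d\iota_{\pi^\sharp(\alpha)}\mu$ as the explicit expression for $\n\alpha$ supplied by \eqref{eq:diff}. Specializing \eqref{eq:diff} to $p=1$ and using that $\iota_\pi\alpha=0$ on a $1$-form gives $\delta_{KB}\alpha = \iota_\pi d\alpha$, so the identity to be proved is
$${\rm div}\,\pi^\sharp(\alpha) = -\iota_\pi d\alpha + \iota_{\phi_\mu}\alpha.$$

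The key geometric input, which I would isolate as the first step, is the pointwise identity $\iota_{\pi^\sharp(\alpha)}\mu = -\,\alpha\wedge\iota_\pi\mu$, valid for any $1$-form $\alpha$. Both sides are tensorial and $\mathbb{R}$-linear in $\pi$, so it suffices to verify it for a decomposable bivector $\pi = X\wedge Y$, where $\pi^\sharp(\alpha) = (\iota_X\alpha)Y - (\iota_Y\alpha)X$ and $\iota_\pi\mu = \iota_Y\iota_X\mu$; expanding both sides then reduces to a short bookkeeping check against the paper's right-contraction convention $\iota_Y\iota_X = \iota_{X\wedge Y}$.

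Given this, I would apply $d$ and use the graded Leibniz rule to obtain
$$d\iota_{\pi^\sharp(\alpha)}\mu = -\,d\alpha\wedge\iota_\pi\mu + \alpha\wedge d\iota_\pi\mu.$$
The second term is handled by the defining equation $d\iota_\pi\mu = \iota_{\phi_\mu}\mu$ of the modular vector field together with $\alpha\wedge\iota_{\phi_\mu}\mu = (\iota_{\phi_\mu}\alpha)\mu$, which follows from $\alpha\wedge\mu=0$. The first term is converted to $-(\iota_\pi d\alpha)\mu$ using exactly the duality identity $\iota_X\eta\wedge\nu = (-1)^{(p+1)q}\eta\wedge\iota_X\nu$ already employed in the proof of \eqref{eq:diff}, here with $\eta = d\alpha$ ($p=2$), $X=\pi$ ($q=2$), and $\nu=\mu$, for which the sign is $+1$. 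Collecting terms yields ${\rm div}\,\pi^\sharp(\alpha)\,\mu = (-\iota_\pi d\alpha + \iota_{\phi_\mu}\alpha)\mu$, which is the desired identity.

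The main obstacle is the first step: fixing the sign and the contraction order in $\iota_{\pi^\sharp(\alpha)}\mu = -\alpha\wedge\iota_\pi\mu$ under the right-action convention for $\iota$, since a sign error there propagates through the whole computation. As an alternative that sidesteps the explicit formula \eqref{eq:diff}, one can argue directly from the adjoint definition of $\n$: pairing $\n\alpha$ against an arbitrary function $g$ gives $\int_M g\,\n\alpha\,\mu = (\alpha, [\pi,g])_\mu = \int_M \alpha(\pi^\sharp dg)\,\mu$ (using $[\pi,g]=\pi^\sharp(dg)$ for the Lichnerowicz differential on functions), while integrating $\int_M g\,{\rm div}\,\pi^\sharp(\alpha)\,\mu$ by parts via Stokes' theorem produces $-\int_M dg\wedge\iota_{\pi^\sharp(\alpha)}\mu = -\int_M \pi(\alpha,dg)\,\mu = \int_M\alpha(\pi^\sharp dg)\,\mu$; nondegeneracy of the pairing over all $g$ then forces $\n\alpha = {\rm div}\,\pi^\sharp(\alpha)$.
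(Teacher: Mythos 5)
Your proof is correct and takes essentially the same route as the paper's: the defining relation $({\rm div}\,V)\,\mu = d\iota_V\mu$, the key identity $\iota_{\pi^\sharp(\alpha)}\mu = -\alpha\wedge\iota_\pi\mu$ (which the paper simply cites to Laurent-Gengoux et al.\ rather than verifying on decomposables), and then the Leibniz rule together with $d\iota_\pi\mu = \iota_{\phi_\mu}\mu$. Your extra adjoint-based argument via pairing against arbitrary functions $g$ is also sound, but the main line of the proposal coincides with the paper's own proof.
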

\begin{proof}
The divergence of a vector field $V$ is defined as $({\rm div} V) \mu = d \iota_V \mu$. By standard manipulations of interior products (see {e.g.} Ref.~\cite{laurent2012poisson}, Proposition 3.4), $\iota_{\pi^\sharp (\alpha)} \mu =  - \alpha \wedge \iota_\pi \mu$. Then
$$ {\rm div} \pi(\alpha, \cdot)  = -d \alpha \wedge \iota_\pi \mu + \alpha \wedge d \iota_\pi \mu = (\n \alpha) \mu. $$
\end{proof}
\begin{rem}To avoid confusion we note that a very similar, but different, differential has been considered previously~\cite{evens1999poisson, xu1999gerstenhaber}. The volume form $\mu$ induces an isomorphism $\ast : A^{q}(M) \to \Omega^{n-q}(M)$ given by $\ast : X \mapsto \iota_X \mu$. A differential operator can then be defined as $\delta_L = \ast [\pi, \ast^{-1} \cdot ] $. A short calculation shows that this operator is given by $[\iota_\pi, d] + \iota_{\phi_\mu}$. Note that, ignoring the factor of $(-1)^p$, the sign of $\iota_{\phi_\mu}$ differs between $\n$ and $\delta_L$, and they are not the same differential when $\phi_\mu \neq 0$. %and in the symplectic case they further agree with the operator $d^\Lambda$, which may also be constructed using the symplectic star~\cite{tseng2012cohomology,libermann1955structures} $\ast_s : \Omega^k(M) \to \Omega^{2m-k}(M)$.
\end{rem}

\section{Admissible functions and the space of Poisson structures} \label{sec:admiss}
\begin{defn}
The space of Poisson structures, $\mathcal{P}(M)$, on $M$ is the set
$$ \mathcal{P}(M) = \{X \in A^2(M) \, | \, [X,X]=0 \}.$$
\end{defn}
In order to define a Poisson bracket on $\mathcal{P}(M)$ we need an appropriate notion of function and derivative on the space of Poisson structures. We define the vector space $\mathcal{A}$ (over $\mathbb{R}$) as functions $F: A^2(M) \to \mathbb{R}$ that can be written as an integral
$$F(\pi) = \int_M f( j_r \pi) \mu$$
of some smooth function $f: J_r (\Lambda^2 TM ) \to \mathbb{R}$, for finite $r \geq 0$. We define the subspace $\mathcal{A}_0 \subset \mathcal{A}$ as those functions that restrict to zero on $\mathcal{P}(M$),
$$ \mathcal{A}_0 = \{F \in \mathcal{A}\, | \, \pi \in \mathcal{P}(M) \Rightarrow F(\pi)=0\}.$$
\begin{defn}
A primitive admissible function $F: \mathcal{P}(M) \to \mathbb{R}$ is an element of the quotient space $\mathcal{A}/\mathcal{A}_0$. An admissible function is an element of the commutative algebra generated by the primitive admissible functions.\end{defn}
Given a differentiable one-parameter family of 2-vectors $\pi(t) \in A^2(M)$ (not necessarily Poisson structures), the derivative of a primitive admissible function $F$ is the 2-form $\beta_{F, \pi}$ given by 
\begin{equation} \label{eq:funcd}
\left .\frac{d F}{dt} \right |_{t=0} =  ( \beta_{F, \pi} ,  \dot{\pi} )_\mu,
\end{equation}
where $ \dot{\pi} = (d\pi/dt)|_{t=0}$. The derivative extends to all admissible functions by the chain rule. For any primitive admissible function $F$, the set of representative elements of $\mathcal{A}$ all differ by an element of $\mathcal{A}_0$. That is, given a primitive admissible function $F$ with two representatives $\tilde F_1$ and $\tilde F_2$, we have $\tilde G = \tilde F_1 - \tilde F_2 \in \mathcal{A}_0$.
\begin{lem} \label{lem:YY}
Let $\tilde G$ be a function in $\mathcal{A}_0$, then
$$ \pi \in \mathcal{P}(M) \Rightarrow \delta_{\pi, \mu} \beta_{\tilde G, \pi} = 0.$$
\end{lem}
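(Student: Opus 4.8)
The plan is to prove that the $1$-form $\n\beta_{\tilde G,\pi}$ vanishes by testing it against arbitrary vector fields. First I would record that the pairing \eqref{eq:pairing1} is non-degenerate between $1$-forms and vector fields: if $(\alpha, V)_\mu = \int_M (\iota_V\alpha)\mu = 0$ for every $V \in A^1(M)$, then taking $V = gX$ for arbitrary functions $g$ and fixed $X$ forces $\alpha(X) = 0$ pointwise, so $\alpha = 0$. Hence it suffices to show $(\n\beta_{\tilde G,\pi}, V)_\mu = 0$ for all vector fields $V$. By the defining adjoint property of $\n$, namely $(\beta, [\pi, X])_\mu = (\n\beta, X)_\mu$, this is equivalent to proving $(\beta_{\tilde G,\pi}, [\pi, V])_\mu = 0$.

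The key observation is that the test $2$-vectors $[\pi, V]$ are realized by genuine paths inside $\mathcal{P}(M)$, so the hypothesis $\tilde G \in \mathcal{A}_0$ can be applied directly. Let $\phi_t$ be the flow of $V$ and set $\pi_t = (\phi_t)_\ast \pi$. Since diffeomorphisms carry Poisson structures to Poisson structures, $\pi_t \in \mathcal{P}(M)$ for all $t$, with velocity $\dot\pi_0 = -\mathcal{L}_V\pi = [\pi, V]$ (the sign being immaterial as $V$ ranges over all vector fields). Because $\tilde G$ restricts to zero on $\mathcal{P}(M)$, the function $t \mapsto \tilde G(\pi_t)$ is identically zero, so differentiating at $t=0$ through \eqref{eq:funcd} gives $(\beta_{\tilde G,\pi}, [\pi, V])_\mu = 0$. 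Combined with the reduction of the first paragraph, this yields $\n\beta_{\tilde G,\pi} = 0$.

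The one point requiring care is that $\beta_{\tilde G,\pi}$ is defined as the derivative of $\tilde G$ along \emph{arbitrary} variations in $A^2(M)$, whereas the path $\pi_t$ stays within the (generally singular) subset $\mathcal{P}(M)$; this causes no trouble, since $\beta_{\tilde G,\pi}$ is the full gradient $2$-form and the chain rule applies to the single direction $\dot\pi_0$ without any need to differentiate within $\mathcal{P}(M)$. The conceptual heart of the argument, and what makes it go through despite the possible singularities and deformation obstructions of $\mathcal{P}(M)$, is that one needs only the coboundary directions $[\pi, V]$ rather than all Lichnerowicz $2$-cocycles in $\ker[\pi, \cdot\,]$: coboundaries are automatically integrable, being tangent to the diffeomorphism orbit of $\pi$, so no obstruction theory enters. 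This is precisely why the conclusion takes the adjoint-to-coboundaries form $\n\beta_{\tilde G,\pi} = 0$.
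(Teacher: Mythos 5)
Your proof is correct and takes essentially the same route as the paper's: differentiate $\tilde G$ along the family of Poisson structures generated by the flow of an arbitrary vector field $V$, transfer to $\delta_{\pi,\mu}\beta_{\tilde G,\pi}$ via the adjoint property of the pairing, and conclude from arbitrariness of $V$ (the paper invokes the fundamental lemma of the calculus of variations where you spell out non-degeneracy of the pairing directly). Your added remarks on the sign of $\dot\pi_0$ and on needing only the automatically integrable coboundary directions $[\pi,V]$ are sound but not essential to the argument.
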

\begin{proof}
Consider the one-parameter family of Poisson structures $\pi_t$, $t \geq 0$, generated by the flow of an arbitrary vector field $V$, so that $\partial_t \pi_t = [V, \pi_t]$. As $\pi_t$ is a one-parameter family of Poisson structures and $\tilde G \in \mathcal{A}_0$ we have
$$0 =  \left .\frac{d \tilde G}{dt} \right |_{t=0} =  ( \beta_{\tilde G, \pi_0} ,  \dot {\pi} )_\mu  = ( \beta_{\tilde G, \pi_0} , [\pi_0 , V] )_\mu = ( \delta_{\pi, \mu} \beta_{\tilde G, \pi} , V )_\mu, $$
where $\dot \pi = (\partial_t \pi)_{t=0}$. Since $V$ is arbitrary, the fundamental lemma of the calculus of variations implies $ \delta_{\pi, \mu}\beta_{\tilde G, \pi} =0$.
\end{proof}
\begin{cor}
Let $F$ be an admissible function, then the 1-form $ \delta_{\pi, \mu} \beta_{F, \pi} $ does not depend on the choice of representative element of $\mathcal{A}$.
\end{cor}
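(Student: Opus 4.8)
The plan is to reduce everything to Lemma~\ref{lem:YY} by exploiting linearity, handling the primitive case first and then bootstrapping to the full algebra with the chain rule.

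First I would settle the primitive case. Fix $\pi \in \mathcal{P}(M)$ and let $F$ be a primitive admissible function with two representatives $\tilde F_1, \tilde F_2 \in \mathcal{A}$. The defining relation \eqref{eq:funcd} is linear, so for every one-parameter family with velocity $\dot\pi$ we have $(\beta_{\tilde F_1,\pi} - \beta_{\tilde F_2,\pi}, \dot\pi)_\mu = \tfrac{d}{dt}(\tilde F_1 - \tilde F_2) = (\beta_{\tilde G,\pi},\dot\pi)_\mu$, where $\tilde G := \tilde F_1 - \tilde F_2$. Since the pointwise pairing between $2$-forms and $2$-vectors is non-degenerate and $\mu$ is a volume form, the $2$-form representing the derivative is uniquely determined, giving $\beta_{\tilde F_1,\pi} - \beta_{\tilde F_2,\pi} = \beta_{\tilde G,\pi}$. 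By definition $\tilde G \in \mathcal{A}_0$, so Lemma~\ref{lem:YY} yields $\n \beta_{\tilde G,\pi} = 0$, i.e.\ $\n\beta_{\tilde F_1,\pi} = \n\beta_{\tilde F_2,\pi}$. This proves the claim for primitive admissible functions.

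Next I would treat a general admissible function $F = P(F_1,\dots,F_k)$, a polynomial in primitive admissible functions $F_i$. Choosing representatives $\tilde F_i \in \mathcal{A}$, the chain rule gives $\beta_{F,\pi} = \sum_i c_i\,\beta_{F_i,\pi}$, where $c_i = \partial_{y_i} P$ evaluated at the point $(F_1(\pi),\dots,F_k(\pi))$. The essential point is that these scalars are representative-independent: each value $F_j(\pi)$ is well-defined for $\pi \in \mathcal{P}(M)$ precisely because $F_j$ is primitive admissible, so its ambiguity lives in $\mathcal{A}_0$, which vanishes on $\mathcal{P}(M)$. Applying the linear operator $\n$ termwise gives $\n\beta_{F,\pi} = \sum_i c_i\,\n\beta_{F_i,\pi}$, and each $\n\beta_{F_i,\pi}$ is representative-independent by the primitive case; hence so is $\n\beta_{F,\pi}$.

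The only genuine subtlety, and the step I would check most carefully, is the bookkeeping in this last paragraph: one must confirm that the chain-rule coefficients $c_i$ enter only through the representative-independent scalars $F_j(\pi)$ and carry no hidden dependence on the choice of representative. Everything else follows directly from the linearity of the functional derivative and of $\n$, combined with Lemma~\ref{lem:YY}.
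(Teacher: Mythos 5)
Your proof is correct and follows the same route the paper intends: the corollary is stated there without proof precisely because it is immediate from Lemma~\ref{lem:YY}, given that two representatives of a primitive admissible function differ by an element of $\mathcal{A}_0$ and that the derivative extends to the algebra of admissible functions by the chain rule. Your write-up merely makes explicit the uniqueness of the representing $2$-form (via arbitrary variations $\dot\pi \in A^2(M)$) and the representative-independence of the chain-rule coefficients $F_j(\pi)$ on $\mathcal{P}(M)$, both of which are sound.
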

\begin{exam}
A useful class of admissible functions on $\mathcal{P}(M)$ are linear functions. Given some fixed 1-form $\beta$ these are given as
$$ \int_M (\iota_\pi \beta) \mu.$$
The derivative of such a linear function can be represented by the 2-form $\beta$. 
\end{exam}
\section{The Poisson bracket $\{\cdot, \cdot \}_\mu$}
Recall that a commutative algebra is a Poisson algebra if it is equipped with a Lie bracket, $\{\cdot, \cdot \}$ which is a derivation, so that $\{f g, h\} = f\{g, h\}+g \{f, h\}$. 
\begin{thm}  \label{thm:jac}
The bracket on admissible functions $\mathcal{P}(M) \to \mathbb{R}$ given by
\begin{equation}\label{eq:jac}
\{F, G\}_\mu = \left ( \delta_{\pi, \mu} \beta_{F, \pi} \wedge \delta_{ \pi, \mu} \beta_{G, \pi} , \pi \right  )_\mu,
\end{equation}
for two admissible functions $F$ and $G$, makes the commutative algebra of admissible functions on $\mathcal{P}(M)$ a Poisson algebra.
\end{thm}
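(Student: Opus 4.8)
The plan is to check the Poisson-algebra axioms, with the Jacobi identity as the one substantial point. Write $\alpha_F:=\n\beta_{F,\pi}$ and $V_F:=\pi^\sharp\alpha_F$, so that $\{F,G\}_\mu=(\alpha_F\wedge\alpha_G,\pi)_\mu=\int_M\pi(\alpha_F,\alpha_G)\,\mu$, which is again of the form \eqref{eq:f1} and hence admissible. Bilinearity follows from linearity of $F\mapsto\beta_{F,\pi}$ and of $\n$, antisymmetry from $\alpha_F\wedge\alpha_G=-\alpha_G\wedge\alpha_F$, and the Leibniz rule from the product rule $\beta_{FG,\pi}=F(\pi)\beta_{G,\pi}+G(\pi)\beta_{F,\pi}$, which gives $\alpha_{FG}=F\alpha_G+G\alpha_F$ and thus $\{FG,H\}_\mu=F\{G,H\}_\mu+G\{F,H\}_\mu$. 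Independence of the representative is the Corollary to Lemma~\ref{lem:YY}. Only the Jacobi identity then remains.

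First I would identify the Hamiltonian flow. By Lemma~\ref{lem:div} and $\n^2=0$ on $\mathcal{P}(M)$ we have $\operatorname{div}V_H=\n\alpha_H=\n^2\beta_{H,\pi}=0$, so $V_H$ is divergence-free and preserves $\mu$; and adjointness of $\n$ gives $\{\Phi,H\}_\mu=(\beta_{\Phi,\pi},[V_H,\pi])_\mu$ for every admissible $\Phi$. Thus $\{\cdot,H\}_\mu$ is differentiation along the gauge deformation $\dot\pi=[V_H,\pi]=\mathcal{L}_{V_H}\pi$, which is tangent to $\mathcal{P}(M)$, so $\{\{F,G\}_\mu,H\}_\mu=\tfrac{d}{dt}\big|_0\{F,G\}_\mu(\pi_t)$ along this flow. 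Writing $\mathcal{L}^H$ for $\tfrac{d}{dt}\big|_0$ along $\pi_t$: because $\n=\n_{\pi,\mu}$ is built equivariantly from $\pi$ and $\mu$ and $\mathcal{L}_{V_H}\mu=0$, its variation along the flow is the commutator $[\mathcal{L}_{V_H},\n]$, whence $\mathcal{L}^H\alpha_F=\mathcal{L}_{V_H}\alpha_F+\n R_F^H$ with $R_F^H:=\mathcal{L}^H\beta_{F,\pi}-\mathcal{L}_{V_H}\beta_{F,\pi}$ recording the non-naturalness of the gradient. Substituting and using that $\mathcal{L}_{V_H}$ is a derivation over the contraction, the geometric terms assemble into $\int_M\mathcal{L}_{V_H}[\iota_\pi(\alpha_F\wedge\alpha_G)]\,\mu$, which vanishes by Stokes since $V_H$ is divergence-free and $M$ is closed.

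What survives, after moving $\n$ across with its adjoint, is $\{\{F,G\}_\mu,H\}_\mu=(R_F^H,\mathcal{L}_{V_G}\pi)_\mu-(R_G^H,\mathcal{L}_{V_F}\pi)_\mu$. The second-order part of $R_F^H$ contributes the symmetric second variation $\operatorname{Hess}F[\mathcal{L}_{V_H}\pi,\mathcal{L}_{V_G}\pi]$, and these cancel in the cyclic sum by symmetry of the Hessian. For the first-order part one uses that $\mathcal{L}_{V_H}$ is skew-adjoint for $(\cdot,\cdot)_\mu$ (again divergence-freeness) together with the Schouten Jacobi identity to obtain the reassembly
$$\sum_{\mathrm{cyc}}\{\{F,G\}_\mu,H\}_\mu=K,\qquad K:=\sum_{\mathrm{cyc}}\int_M\iota_{[V_F,V_G]}\alpha_H\,\mu,$$
so the Jacobiator equals $K$ and everything reduces to showing $K=0$.

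I expect this last identity to be the main obstacle, and it is where integrability $[\pi,\pi]=0$ must enter. The idea is to evaluate $\iota_{[V_F,V_G]}\alpha_H$ in two independent ways. Cartan's formula with $\operatorname{div}V_F=\operatorname{div}V_G=0$ gives $\int_M\iota_{[V_F,V_G]}\alpha_H\,\mu=-\int_M d\alpha_H(V_F,V_G)\,\mu$. Separately, since $\pi$ is Poisson the anchor is a Lie-algebra homomorphism, $[V_F,V_G]=\pi^\sharp[\alpha_F,\alpha_G]_\pi$, so that $\iota_{[V_F,V_G]}\alpha_H=-\iota_{V_H}[\alpha_F,\alpha_G]_\pi$; expanding by the Koszul formula \eqref{eq:kos} and dropping divergences yields $\int_M\iota_{[V_F,V_G]}\alpha_H\,\mu=\int_M\big(d\alpha_F(V_G,V_H)-d\alpha_G(V_F,V_H)\big)\,\mu$. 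Writing $S:=\sum_{\mathrm{cyc}}d\alpha_H(V_F,V_G)$, the first evaluation gives $K=-\int_M S\,\mu$ while the cyclic sum of the second gives $K=2\int_M S\,\mu$; hence $3\int_M S\,\mu=0$ and $K=0$, establishing the Jacobi identity. The delicate points will be the cyclic and sign bookkeeping in this comparison, and the equivariance argument for the variation of $\n$ in the second step.
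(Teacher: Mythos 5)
Your proof is correct, but it reaches the Jacobi identity by a genuinely different route at both substantive steps. The paper works statically: Lemma~\ref{lem:fd} computes the full gradient $\beta_{\{F,G\}_\mu,\pi}$ of the bracket, feeds it back through \eqref{eq:ways}, cancels the second-variation $\gamma$-terms by symmetry (your Hessian cancellation is the same fact), and arrives at the Jacobiator equal to $-2\sum_{\circlearrowright}(\delta_{\pi,\mu}\beta_{H,\pi},[V_{F,\pi},V_{G,\pi}])_\mu$, which is then killed by the cyclic expansion of $\delta_{\pi,\mu}$ on triple wedge products (Lemma~\ref{lem:cyc}) and adjointness onto $(\cdot,[\pi,\pi])_\mu=0$. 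You instead read $\{\cdot,H\}_\mu$ dynamically as differentiation along the flow $\dot\pi=[V_{H,\pi},\pi]$; your equivariance claim $\dot\delta=[\mathcal{L}_{V_{H,\pi}},\delta_{\pi,\mu}]$ is an exact identity (differentiate the adjunction $(\delta_{\pi,\mu}\beta,X)_\mu=(\beta,[\pi,X])_\mu$ and use the graded Jacobi identity), so the geometric terms genuinely collapse to a single total Lie derivative killed by Stokes, leaving the Jacobiator equal to $K=\sum_{\circlearrowright}\int_M\iota_{[V_{F,\pi},V_{G,\pi}]}\delta_{\pi,\mu}\beta_{H,\pi}\,\mu$. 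Your closing double evaluation of $K$ checks out — Cartan plus divergence-freeness gives $-\int_M S\,\mu$, the anchor homomorphism plus the Koszul formula \eqref{eq:kos} gives $2\int_M S\,\mu$, hence $3\int_M S\,\mu=0$ — and it fully replaces Lemma~\ref{lem:cyc}; integrability enters through $\pi^\sharp[\alpha,\beta]_\pi=[\pi^\sharp\alpha,\pi^\sharp\beta]$ and through ${\rm div}\,V_{F,\pi}=\delta_{\pi,\mu}^2\beta_{F,\pi}=0$, rather than through the final pairing with $[\pi,\pi]$. One curiosity, not a flaw: your grouping gives Jacobiator $=K$ while the paper's gives $-2K$; both reductions are valid, and since each independently proves $K=0$ the coefficients are consistent (equating the two groupings would itself be a third proof that $3K=0$). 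What your route buys is economy and a transparent source for each cancellation; what the paper's buys is the explicit gradient formula \eqref{eq:funcWHAT}, which it reuses later (e.g.\ in Proposition~\ref{lem:alg}), whereas your argument establishes the Jacobi identity alone.
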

$\{F, G\}_\mu$ is clearly once again an admissible function, and by Lemma~\ref{lem:YY}, does not depend on the choice of representative 2-forms $\beta_{F, \pi}$ and $\beta_{G, \pi}$.  To prove we now show \eqref{eq:jac} obeys the axioms of a Poisson bracket. $\mathbb{R}$-bilinearity and the Leibniz formula follow from properties of the derivative of admissible functions. Anti-commutativity follows from the properties of the wedge product. All that remains is to establish the Jacobi identity. To do so we first introduce a characteristic vector field associated to an admissible function $F$.
\begin{prop} \label{prop:prop}
Let $F$ be an admissible function, and denote by $V_{F,\pi} \in A^1(M)$ the vector field $V_{F,\pi} = \pi^\sharp(\n \beta_{F, \pi})$. Then $V_{F,\pi}$ has the following properties:
\begin{center}
\begin{minipage}{0.8 \textwidth}
$V_{F,\pi}$ is tangent to the symplectic foliation defined by $\pi$,

$V_{F,\pi}$ is volume preserving with respect to $\mu$ $(  d \iota_{V_{F,\pi}}\mu = 0 )$,

$ \sbl V_{F,\pi}, \pi \sbr = \pi(d \n \beta_{F, \pi}) \pi  -\frac{1}{2} \pi \wedge \pi ( d \n \beta_{F, \pi}, \cdot)   $,

$\sbl V_{F,\pi}, \phi_\mu \sbr   = - \pi^\sharp ( \mathcal{L}_{\phi_\mu} \n \beta_{F, \pi} ) $,

For two functions $F$, $G$, the corresponding vector fields satisfy
\end{minipage}
\end{center}
$$
\sbl V_{F,\pi}, V_{G,\pi} \sbr = - \pi^\sharp (\n (\n  \beta_{F, \pi} \wedge \n \beta_{G, \pi})).
$$
\end{prop}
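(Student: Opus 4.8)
The plan is to reduce the Lie bracket of the two vector fields to the cotangent Lie algebroid bracket on $\Omega^1(M)$, and then to apply Lemma~\ref{prop:kos}. Write $\alpha = \n \beta_{F, \pi}$ and $\gamma = \n \beta_{G, \pi}$, which are both $1$-forms, so that $V_{F,\pi} = \pi^\sharp(\alpha)$ and $V_{G,\pi} = \pi^\sharp(\gamma)$ are the anchor images of these two $1$-forms.

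First I would invoke the defining property of the cotangent Lie algebroid $(M, [\cdot, \cdot]_\pi, \pi^\sharp)$: its anchor $\pi^\sharp$ is a morphism of brackets. This is the standard fact (equivalent to integrability $[\pi,\pi]=0$; see Ref.~\cite{da1999geometric} Section 17) that
$$ [\pi^\sharp(\alpha), \pi^\sharp(\gamma)] = \pi^\sharp([\alpha, \gamma]_\pi), $$
so that $[V_{F,\pi}, V_{G,\pi}] = \pi^\sharp([\n \beta_{F, \pi}, \n \beta_{G, \pi}]_\pi)$. This transfers the entire problem onto the Koszul bracket of the two $1$-forms, and the remaining work is to evaluate that bracket.

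The second step is to recognise $\n$, on the forms occurring here, as one of the twisted Koszul-Brylinski differentials covered by Lemma~\ref{prop:kos}. By \eqref{eq:diff}, $\n$ acts on $\Omega^p(M)$ as $(-1)^p(\delta_{KB} - \iota_{\phi_\mu})$, and since $\phi_\mu$ is a Poisson vector field the operator $\delta := \delta_{KB} - \iota_{\phi_\mu} = \delta_{KB} + \iota_{-\phi_\mu}$ is exactly a twisted differential $\delta_{KB} + \iota_V$ with $V = -\phi_\mu$. The key simplification is that every form fed to $\n$ in this computation has even degree: $\beta_{F,\pi}, \beta_{G,\pi} \in \Omega^2(M)$, and the product $\n\beta_{F,\pi} \wedge \n\beta_{G,\pi}$ is again a $2$-form, so the sign $(-1)^p$ equals $+1$ throughout and $\n$ coincides with $\delta$ on all relevant inputs. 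Hence $\n\beta_{F,\pi} = \delta\beta_{F,\pi}$, $\n\beta_{G,\pi} = \delta\beta_{G,\pi}$, and Lemma~\ref{prop:kos} applied to $\delta$ and the $2$-forms $\beta_{F,\pi}, \beta_{G,\pi}$ gives
$$ [\n \beta_{F, \pi}, \n \beta_{G, \pi}]_\pi = [\delta \beta_{F, \pi}, \delta \beta_{G, \pi}]_\pi = -\delta(\delta\beta_{F,\pi} \wedge \delta\beta_{G,\pi}) = -\n(\n \beta_{F, \pi} \wedge \n \beta_{G, \pi}). $$
Substituting into the morphism identity of the first step yields $[V_{F,\pi}, V_{G,\pi}] = -\pi^\sharp(\n(\n \beta_{F, \pi} \wedge \n \beta_{G, \pi}))$, as claimed.

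The individual steps are short, so the main thing to get right is the sign bookkeeping in the second step: one must check that the $(-1)^p$ factor in \eqref{eq:diff} together with the reversed sign of $\iota_{\phi_\mu}$ (the very sign distinguishing $\n$ from the $\delta_L$ of the Remark) really do permit treating $\n$ as a genuine twisted Koszul-Brylinski differential on the degree-$2$ inputs, so that Lemma~\ref{prop:kos} applies verbatim; it is fortunate that every input is even, making the sign trivial. If one preferred not to take the anchor-morphism property as given, the alternative -- and genuinely more laborious -- route would be to expand $[\pi^\sharp \alpha, \pi^\sharp \gamma]$ directly from the Koszul-bracket formula for $[\alpha, \gamma]_\pi$ and verify the morphism identity from $[\pi,\pi]=0$, but this is classical and I would simply cite it.
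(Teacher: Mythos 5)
Your argument for the final identity is correct, and it is exactly the paper's route: the paper proves that item by citing the Koszul-bracket formula \eqref{eq:kos} (in the packaged form of Lemma~\ref{prop:kos}) together with the fact that the anchor $\pi^\sharp$ of the cotangent Lie algebroid preserves Lie brackets. Your sign bookkeeping is a careful elaboration of what the paper leaves implicit: since $\phi_\mu$ is Poisson, so is $-\phi_\mu$, hence $\delta = \delta_{KB} - \iota_{\phi_\mu}$ is a legitimate twisted Koszul--Brylinski differential, and because every form fed to $\n$ here ($\beta_{F,\pi}$, $\beta_{G,\pi}$, and $\n\beta_{F,\pi}\wedge\n\beta_{G,\pi}$) has degree $2$, the $(-1)^p$ factor in \eqref{eq:diff} is $+1$ throughout and Lemma~\ref{prop:kos} applies verbatim.

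The genuine gap is one of coverage: the proposition asserts \emph{five} properties of $V_{F,\pi}$, and your proposal proves only the fifth. You say nothing about (i) tangency to the symplectic foliation, which is immediate since $V_{F,\pi}$ lies in the image of $\pi^\sharp$; (ii) volume preservation, which the paper obtains from Lemma~\ref{lem:div} as ${\rm div}\, V_{F,\pi} = \n\n\beta_{F,\pi} = 0$, where $\n^2 = 0$ again uses that $\phi_\mu$ is Poisson; (iii) the formula for $\sbl V_{F,\pi}, \pi \sbr$, which requires a genuinely separate computation --- the paper uses the identity $\sbl P, Q \sbr(\alpha) = -\sbl P(\alpha), Q\sbr - \sbl Q(\alpha), P\sbr - (P\wedge Q)(d\alpha) + P(d\alpha)Q + P\,Q(d\alpha)$ for $P, Q \in A^2(M)$, $\alpha \in \Omega^1(M)$, together with $\sbl \pi, \pi \sbr = 0$; and (iv) the commutation relation $\sbl V_{F,\pi}, \phi_\mu \sbr = -\pi^\sharp(\mathcal{L}_{\phi_\mu}\n\beta_{F,\pi})$, which the paper derives by applying $\mathcal{L}_{\phi_\mu}$ to $\iota_{V_{F,\pi}}\mu = -\n\beta_{F,\pi}\wedge\iota_\pi\mu$ and using $\mathcal{L}_{\phi_\mu}\mu = 0$ and $\mathcal{L}_{\phi_\mu}\pi = 0$. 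None of these four follows from your bracket computation (indeed (iii) and (iv) are the items actually used later in Lemma~\ref{lem:fd} and Proposition~\ref{lem:alg}), so as it stands the proposal is an accurate proof of one fifth of the statement, not of the proposition.
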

\begin{proof}
The first property follows as $V_{F,\pi}$ is of the form $\pi(\alpha, \cdot)$ for some 1-form $\alpha$. The second property is a consequence of Lemma~\ref{lem:div}. To see the third observe that
$$\sbl V_{F,\pi}, \pi \sbr  = \sbl \pi(^\sharp  \n \beta_{F, \pi} ), \pi \sbr.$$
Now for $P, Q \in A^2(M)$, $\alpha \in \Omega^1(M)$, the following identity holds
$$ \sbl P, Q \sbr (\alpha) =  -\sbl P(\alpha), Q \sbr - \sbl  Q (\alpha), P \sbr - (P \wedge Q)(d\alpha) + P(d\alpha)  Q +  P  Q(d \alpha) .$$ 
Along with the fact that $\sbl \pi, \pi \sbr = 0$, this gives the third property. The fourth follows from observing that
$$\iota_{\sbl V_{F,\pi}, \phi_\mu \sbr} \mu = - \mathcal{L}_{\phi_\mu} \iota_{V_{F,\pi}} \mu = \mathcal{L}_{\phi_\mu} (\n \beta_{F, \pi} \wedge \iota_\pi \mu) = -\iota_{ \pi^\sharp ( \mathcal{L}_{\phi_\mu} \n \beta_{F, \pi} ) } \mu, $$
along with the property $\mathcal{L}_{\phi_\mu} \pi = 0 $. The fifth is a consequence of \eqref{eq:kos} and the fact that the anchor map $\pi^\sharp$ of the cotangent Lie algebroid preserves Lie brackets (see {e.g.} Ref.~\cite{da1999geometric}, Proposition 17.1).
\end{proof}

With these vector fields we may rewrite the Poisson bracket in a number of different ways 
\begin{equation}  \label{eq:ways}
\{ F, G \}_\mu= \left ( \n \beta_{F, \pi} \wedge \n \beta_{G, \pi} , \pi \right  )_\mu = \left ( \n \beta_{G, \pi} ,V_{F,\pi}  \right )_\mu = \left ( \beta_{G, \pi} , \sbl  \pi , V_{F,\pi} \sbr \right )_\mu = \left (\mathcal{L}_{V_{F,\pi}} \beta_{G, \pi} ,  \pi \right )_\mu . 
\end{equation}
Now we compute the derivative of $\{F,G\}_\mu$. \begin{lem} \label{lem:fd}
The derivative of $\{F,G\}_\mu$ with respect to $\pi$ is represented by the 2-form $\beta_{\{F, G \}_\mu, \pi}$, given by
\begin{equation} \label{eq:funcWHAT}
\begin{split}
\beta_{\{F, G \}_\mu ,\pi} = &( \n \beta_{F, \pi} \wedge \n \beta_{G, \pi}) + \\ &  \left ( \mathcal{L}_{V_{F,\pi}} \beta_{G, \pi} - \mathcal{L}_{V_{G,\pi}}\beta_{F, \pi} \right) + \\ & \left (\gamma_G(\sbl \pi, V_{F,\pi} \sbr, \cdot) - \gamma_F(\sbl \pi, V_{G,\pi} \sbr, \cdot) \right),
\end{split}
\end{equation}
where the operator $\gamma_{F, \pi} : A^2(M) \times A^2(M) \to C^\infty(M)$ is a linear differential operator in the first argument and linear in the second with symmetry
$$(\gamma_{F, \pi}(X, \cdot), Y)_\mu = (\gamma_{F, \pi}(Y, \cdot), X)_\mu$$
for all 2-vector fields $X$ and $Y$.
\end{lem}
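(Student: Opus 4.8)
The plan is to differentiate $\{F,G\}_\mu$ directly along an arbitrary one-parameter family of $2$-vectors $\pi(t)$ (not assumed Poisson), with $\dot\pi=(d\pi/dt)|_{t=0}$, and to read off the representing $2$-form from the defining identity $\tfrac{d}{dt}\{F,G\}_\mu|_{t=0}=(\beta_{\{F,G\}_\mu,\pi},\dot\pi)_\mu$. Writing the bracket as $\{F,G\}_\mu=(\n\beta_{F,\pi}\wedge\n\beta_{G,\pi},\pi)_\mu$, the Poisson tensor enters in three places: through the explicit factor $\pi$ in the pairing, through the two copies of the differential $\n$ (which depends on $\pi$), and through the gradients $\beta_{F,\pi},\beta_{G,\pi}$ (which also depend on $\pi$). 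The Leibniz rule therefore splits the derivative into five groups of terms, and the task is to rewrite each one in the form $(\bullet,\dot\pi)_\mu$. Differentiating the explicit factor produces $(\n\beta_{F,\pi}\wedge\n\beta_{G,\pi},\dot\pi)_\mu$ immediately, accounting for the first line of \eqref{eq:funcWHAT}.

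For the two terms in which only the operator $\n$ is varied, I would avoid differentiating the explicit Koszul--Brylinski expression \eqref{eq:diff}, since that would force me to compute the variation of the modular vector field $\phi_\mu$. Instead I would differentiate the defining adjoint relation $(\n\beta,X)_\mu=(\beta,[\pi,X])_\mu$: because the pairing $(\cdot,\cdot)_\mu$ does not depend on $\pi$, this yields the clean characterization $(\dot{\n}\beta,X)_\mu=(\beta,[\dot\pi,X])_\mu$ for the operator variation $\dot{\n}$. After factoring each wedge through $\iota_\pi$ and $\pi^\sharp$ exactly as in \eqref{eq:ways}, so as to expose $V_{F,\pi}$ and $V_{G,\pi}$, I would use $[\dot\pi,V]=-\mathcal{L}_V\dot\pi$ together with the volume-preservation of $V_{F,\pi}$ and $V_{G,\pi}$ from Proposition~\ref{prop:prop} to integrate by parts, transferring the Lie derivative onto the gradients via $(\beta,\mathcal{L}_V X)_\mu=-(\mathcal{L}_V\beta,X)_\mu$. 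This produces the second line $\mathcal{L}_{V_{F,\pi}}\beta_{G,\pi}-\mathcal{L}_{V_{G,\pi}}\beta_{F,\pi}$.

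For the two terms in which a gradient is varied, the new ingredient is the second derivative of the admissible functions. Writing $\dot\beta_{F,\pi}$ for the variation of the gradient in the direction $\dot\pi$, its pairing against a fixed $2$-vector $X$ is the Hessian $D^2F(\dot\pi,X)$, and equality of mixed second derivatives identifies this with $D^2F(X,\dot\pi)=(\partial_X\beta_{F,\pi},\dot\pi)_\mu$, where $\partial_X\beta_{F,\pi}$ denotes the derivative of the gradient in the direction $X$. This is exactly what the operator $\gamma$ packages: I would define $\gamma_{F,\pi}(X,\cdot):=\partial_X\beta_{F,\pi}$, a $2$-form, so that $\gamma_{F,\pi}(X,Y):=\iota_Y\partial_X\beta_{F,\pi}$ is a differential operator in $X$ and linear in $Y$, and the stated symmetry $(\gamma_{F,\pi}(X,\cdot),Y)_\mu=(\gamma_{F,\pi}(Y,\cdot),X)_\mu$ is precisely the symmetry of the Hessian. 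Factoring these two terms through $\iota_\pi$ and $\pi^\sharp$ as before and using adjointness of $\n$ once more rewrites them as $(\dot\beta_{G,\pi},[\pi,V_{F,\pi}])_\mu$ and $-(\dot\beta_{F,\pi},[\pi,V_{G,\pi}])_\mu$; applying the Hessian identity with $X=[\pi,V_{F,\pi}]$ and $X=[\pi,V_{G,\pi}]$ then yields the third line $\gamma_G([\pi,V_{F,\pi}],\cdot)-\gamma_F([\pi,V_{G,\pi}],\cdot)$.

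Collecting the five contributions reproduces \eqref{eq:funcWHAT}. I expect the main obstacle to be the operator-variation terms: obtaining $\dot{\n}$ in a usable form, and then repeatedly moving everything onto $\dot\pi$ by adjointness, the Schouten identity $[\dot\pi,V]=-\mathcal{L}_V\dot\pi$, and volume-preservation, all while keeping the signs correct. The conceptually delicate point is the emergence of $\gamma$: one must verify that the second-variation terms genuinely close up into a single symmetric operator, which rests on the equality of mixed partials for the smooth function $F$ on $A^2(M)$ and on the fact that differentiating the gradient in a direction $X$ produces a well-defined differential operator in $X$.
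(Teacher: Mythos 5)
Your proposal is correct and takes essentially the same route as the paper's proof: the same Leibniz split into the explicit $\pi$ term, the two operator-variation terms, and the two gradient-variation terms, with the $\dot{\delta}_{\pi,\mu}$ contributions handled via the adjoint relation $(\dot{\delta}_{\pi,\mu}\beta,X)_\mu=(\beta,[\dot\pi,X])_\mu$ and integration by parts against the divergence-free fields $V_{F,\pi}$, $V_{G,\pi}$, and the second-variation terms packaged into the symmetric operator $\gamma_{F,\pi}$ exactly as in the paper (which cites Morrison for the Hessian symmetry you derive from equality of mixed partials). Your reconstruction is in fact slightly more explicit than the paper's own chain of equalities, but the argument is the same.
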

\begin{proof}
Using dot to denote time derivative, we have
%\begin{multline*}
%\left .\frac{d \{F, G\}_\mu}{dt} \right |_{t=0}= \int_M \iota_{\dot{\pi}} \mu \wedge\n \beta_{F, \pi} \wedge \n \beta_{G, \pi} + \iota_\pi \mu \wedge \n \beta_{F, \pi} \wedge \left (\dot{\n} \beta_{G, \pi} + \n \dot{\beta_{G, \pi}} \right ) - \iota_\pi \mu \wedge \n \beta_{G, \pi} \wedge \left (\dot{\n} \beta_{F, \pi} + \n \dot{\beta_{F, \pi}} \right ).
%\end{multline*}
\begin{multline*}
\left .\frac{d \{F, G\}_\mu}{dt} \right |_{t=0} =( \n \beta_{F, \pi} \wedge \n \beta_{G, \pi}, \dot \pi)_\mu +  \\ \left (\n \beta_{F, \pi} \wedge (\dot{\n} \beta_{G, \pi} + \n \dot{\beta_{G, \pi}})  -  \n \beta_{G, \pi} \wedge  (\dot{\n} \beta_{F, \pi} + \n \dot{\beta_{F, \pi}}) ,  \pi   \right)_\mu  . 
\end{multline*}
The first term in the above equation gives the first term in \eqref{eq:funcWHAT}. Now we compute the $\dot{\n} \beta_{G, \pi}$ term. This is given by
$$ ( \n \beta_{F, \pi} \wedge \dot \n \beta_{G, \pi}, \pi )_\mu  = (\dot \n \beta_{G, \pi}, V_{F,\pi} )_\mu  = ( \beta_{G, \pi}, \sbl \dot \pi, V_{F,\pi} \sbr )_\mu= ( \mathcal{L}_{V_{F,\pi}} \beta_{G, \pi}, \dot \pi )_\mu . $$
Finally we must compute $\dot\beta_{G, \pi}$, the second variation of $G$ with respect to $\pi$. Given a 1-parameter family of Poisson structures $\pi(t)$ and an admissible function $F$ the second derivative is given by
$$\left. \frac{d^2 F}{dt^2} \right |_{t=0} = \int_M {\dot \beta}_{F, \pi}(\dot \pi, \dot \pi ) \mu= \int_M \gamma_{F, \pi}(\dot \pi, \dot \pi ) \mu,$$
where $\gamma_{F, \pi} : A^2(M) \times A^2(M) \to C^\infty(M)$ is a linear differential operator in the first argument and linear in the second. The symmetries of $\gamma$ (see Ref.~\cite{morrison1998hamiltonian}, page 478) imply 
$$\int_M \gamma_{F, \pi}(X,Y ) \mu = \int_M \gamma_{F, \pi}(Y, X ) \mu$$
for any 2-vector fields $X$, $Y \in A^2(M)$. This can be rewritten as
$$(\gamma_{F, \pi}(X, \cdot), Y)_\mu = (\gamma_{F, \pi}(Y, \cdot), X)_\mu.$$
Antisymmetry in $F,G$ gives all terms in \eqref{eq:funcWHAT}. 
\end{proof}
We need one more technical lemma before we establish the Jacobi identity
\begin{lem} \label{lem:cyc}
Let $\alpha$, $\beta$, $\gamma \in \Omega^1(M)$. Then
$$ \n (\alpha \wedge \beta \wedge \gamma) =  -\n(\alpha \wedge \beta) \wedge \gamma - (\n \gamma) \alpha \wedge \beta + \circlearrowright,$$
where $\circlearrowright$ denotes the sum of cyclic permutations with respect to $\alpha$, $\beta$, $\gamma$.
\end{lem}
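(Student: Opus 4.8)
The plan is to work with the twisted Koszul--Brylinski differential $D := \delta_{KB} - \iota_{\phi_\mu}$, for which \eqref{eq:diff} reads $\n = (-1)^p D$ on $\Omega^p(M)$. Since $\phi_\mu$ is a Poisson vector field, $D$ is a twisted Koszul--Brylinski differential in the sense of the Proposition, so $D^2 = 0$; moreover $D$ and $\delta_{KB}$ differ by the derivation $\iota_{\phi_\mu}$, whose associated generating bracket vanishes identically, so $D$ generates the same Gerstenhaber (Koszul) bracket $[\cdot,\cdot]_\pi$ on $\Omega(M)$ as $\delta_{KB}$ does \cite{koszul1985crochet,kosmann2008poisson}. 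Concretely, the generating formula $[a,b]_\pi = (-1)^{|a|}\bigl(D(a\wedge b) - (Da)\wedge b - (-1)^{|a|} a\wedge Db\bigr)$ reduces to \eqref{eq:kos} (with $\delta_{KB}$ replaced by $D$, as permitted by the remark following Lemma~\ref{prop:kos}) when $a,b$ are $1$-forms, and $[\cdot,\cdot]_\pi$ is a biderivation of the wedge product. The first step is to rewrite the claimed identity entirely in terms of $D$ using $\n = (-1)^p D$: since $\alpha\wedge\beta\wedge\gamma$ is a $3$-form, $\alpha\wedge\beta$ a $2$-form, and $\gamma$ a $1$-form, the target becomes the purely algebraic relation
\begin{equation*}
D(\alpha\wedge\beta\wedge\gamma) = D(\alpha\wedge\beta)\wedge\gamma - (D\gamma)\,\alpha\wedge\beta + \circlearrowright,
\end{equation*}
where $\circlearrowright$ is the cyclic sum over $\alpha,\beta,\gamma$.

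Next I would compute $[\alpha,\beta\wedge\gamma]_\pi$ in two ways. On one hand, the generating formula applied to the $1$-form $\alpha$ and the $2$-form $\beta\wedge\gamma$ gives $[\alpha,\beta\wedge\gamma]_\pi = -D(\alpha\wedge\beta\wedge\gamma) + (D\alpha)\,\beta\wedge\gamma - \alpha\wedge D(\beta\wedge\gamma)$, which isolates the wanted term $D(\alpha\wedge\beta\wedge\gamma)$. On the other hand, the biderivation property yields $[\alpha,\beta\wedge\gamma]_\pi = [\alpha,\beta]_\pi\wedge\gamma + \beta\wedge[\alpha,\gamma]_\pi$ (the Koszul sign being trivial here because all three forms have degree $1$). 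Equating the two expressions and then substituting \eqref{eq:kos} for the two remaining $1$-form brackets $[\alpha,\beta]_\pi$ and $[\alpha,\gamma]_\pi$ expresses $D(\alpha\wedge\beta\wedge\gamma)$ through the quantities $D(\alpha\wedge\beta)$, $D(\alpha\wedge\gamma)$, $D(\beta\wedge\gamma)$ and $D\alpha,D\beta,D\gamma$. A short rearrangement, using that each $D(\cdot)$ of a $1$-form is a function (so it commutes past everything) and that $1$-forms anticommute under $\wedge$ (so $D(\alpha\wedge\gamma) = -D(\gamma\wedge\alpha)$, $\beta\wedge\alpha = -\alpha\wedge\beta$, and so on), brings the right-hand side into the cyclically symmetric form above. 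Translating back through $\n = (-1)^p D$ --- carefully matching the sign $(-1)^3$ on the left against $(-1)^2$ for $\n(\alpha\wedge\beta)$ and $(-1)^1$ for $\n\gamma$ on the right --- reproduces the stated identity.

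The structural content is essentially immediate once one knows $D$ is a second-order generator of $[\cdot,\cdot]_\pi$, so the only real work is bookkeeping, and that is where I expect the main difficulty: two independent sign systems are in play at once, the Koszul signs $(-1)^{|a|}$ in the generating and Leibniz formulas and the grading signs $(-1)^p$ relating $\n$ to $D$, and they must be reconciled while repeatedly reordering wedge products of $1$-forms. If one prefers to avoid invoking the Gerstenhaber structure, an equivalent route is to expand $\delta_{KB} = [\iota_\pi,d]$ directly on $\alpha\wedge\beta\wedge\gamma$, using that $d$ and $\iota_{\phi_\mu}$ are derivations while $\iota_\pi$ obeys the second-order Leibniz rule for a bivector, and then collect terms; this is more computational but needs no external input beyond \eqref{eq:diff}.
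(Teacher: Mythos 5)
Your proposal is correct, but it reaches the identity by a genuinely different route than the paper. The paper's proof is a self-contained direct computation: it expands $\n(\alpha \wedge \beta \wedge \gamma) = -(\iota_\pi d - d \iota_\pi - \iota_{\phi_\mu})(\alpha \wedge \beta \wedge \gamma)$ term by term, organizes the output into cyclic sums, and invokes an intermediate explicit formula for $\n$ on a wedge of two $1$-forms --- essentially the ``fallback'' route you sketch in your last sentence. You instead exhibit the lemma as the degree-$(1,2)$ instance of a structural fact: $D = \delta_{KB} - \iota_{\phi_\mu}$ is a second-order generator of the Koszul bracket, so applying the generating formula to the pair $(\alpha, \beta\wedge\gamma)$ and equating with the Leibniz expansion $[\alpha, \beta\wedge\gamma]_\pi = [\alpha,\beta]_\pi \wedge \gamma + \beta \wedge [\alpha,\gamma]_\pi$ isolates $D(\alpha\wedge\beta\wedge\gamma)$. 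I verified your sign bookkeeping: the translation $\n = (-1)^p D$ does turn the claim into $D(\alpha\wedge\beta\wedge\gamma) = D(\alpha\wedge\beta)\wedge\gamma - (D\gamma)\,\alpha\wedge\beta + \circlearrowright$, your generating formula reduces on $1$-forms to \eqref{eq:kos} as claimed, and eliminating $[\alpha,\beta\wedge\gamma]_\pi$ between your two expressions and reordering wedge products reproduces exactly this cyclic form. Two remarks on the logical accounting. First, the biderivation property you invoke is equivalent to the seven-term (second-order) identity for $D$, which the paper never establishes --- Lemma~\ref{prop:kos} only treats $1$-forms in $\delta\Omega^2(M)$ --- so this is a genuine external input, standard and correctly attributable to Koszul, but it is where the real content of the lemma is hiding; it holds because $\iota_\pi$ is second order and bracketing with the derivation $d$ (or subtracting the derivation $\iota_{\phi_\mu}$, whose generated bracket vanishes, as you note) preserves this. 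Second, you mention $D^2 = 0$, but it is not actually needed here: squaring to zero governs the Jacobi identity of the generated bracket, while this lemma uses only the Leibniz rule, which follows from the second-order property alone. As for what each approach buys: the paper's computation requires nothing beyond \eqref{eq:diff} and Cartan-type identities, while yours explains \emph{why} the identity holds and generalizes immediately to wedges of forms of arbitrary degree at the cost of importing the BV formalism.
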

\begin{proof}
We have
\begin{align*} \n (\alpha \wedge \beta \wedge \gamma) &= -(\iota_\pi d - d \iota_\pi - \iota_{\phi_\mu}) \alpha \wedge \beta \wedge \gamma \\ &=d(\pi(\alpha,\beta) \gamma) - \iota_\pi(d\alpha \wedge \beta \wedge \gamma) +  (\iota_{\phi_\mu} \gamma) \alpha \wedge \beta + \circlearrowright
\end{align*}
We then find
$$ \n (\alpha \wedge \beta \wedge \gamma) = (d \pi(\alpha, \beta)  + \iota_{\pi^\sharp(\alpha)} d \beta - \iota_{\pi^\sharp(\beta)} d \alpha) \wedge \gamma  + \alpha \wedge \beta (-\iota_\pi d \gamma + \iota_{\phi_\mu} \gamma )+  \circlearrowright.$$
Using $\n \alpha = - \iota_\pi d \alpha + \iota_{\phi_\mu} \alpha$ and the relation
$$\n (\alpha \wedge \beta) = -(\n \alpha) \beta + (\n \beta) \alpha - d \iota_\pi(\alpha \wedge \beta)+ \iota_{\pi^\sharp \beta } d \alpha -\iota_{\pi^\sharp \alpha} d \beta$$
gives the result.
\end{proof}
\begin{lem}
The bracket \eqref{eq:jac} satisfies the Jacobi identity.
\end{lem}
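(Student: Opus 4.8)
The plan is to verify the Jacobi identity by computing the cyclic sum $\{F,\{G,H\}_\mu\}_\mu + \circlearrowright$ directly from the derivative formula of Lemma~\ref{lem:fd}. Throughout I would abbreviate $a = \n\beta_{F,\pi}$, $b = \n\beta_{G,\pi}$, $c = \n\beta_{H,\pi}$, so that $V_{F,\pi} = \pi^\sharp a$, etc. Using the third equality in \eqref{eq:ways} in the form $\{F,K\}_\mu = (\beta_{K,\pi}, \sbl \pi, V_{F,\pi}\sbr)_\mu$ (valid for any admissible $K$, since the derivation only used divergence-freeness of $V_{F,\pi}$), I would substitute $K = \{G,H\}_\mu$ and split $\{F,\{G,H\}_\mu\}_\mu$ into the three contributions coming from the three lines of \eqref{eq:funcWHAT}: the primary term $(b\wedge c, \sbl\pi,V_{F,\pi}\sbr)_\mu$, the Lie-derivative terms, and the second-variation terms built from $\gamma$.

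First I would dispose of the $\gamma$ terms. Setting $S_{H}(F,G) = (\gamma_{H,\pi}(\sbl\pi,V_{F,\pi}\sbr,\cdot), \sbl\pi,V_{G,\pi}\sbr)_\mu$, the symmetry of $\gamma$ recorded in Lemma~\ref{lem:fd} makes $S_H$ symmetric in its two arguments; feeding the three cyclic copies into the sum, the six resulting terms cancel in pairs. This removes all dependence on second variations. For the Lie-derivative contribution I would use the identity $(\mathcal{L}_V\eta,\pi)_\mu = (\eta, \sbl\pi,V\sbr)_\mu$ underlying \eqref{eq:ways} (valid for the divergence-free fields of Prop~\ref{prop:prop}), together with $\sbl\mathcal{L}_V,\mathcal{L}_W\sbr = \mathcal{L}_{\sbl V,W\sbr}$ and the adjointness $(\eta,\sbl\pi,W\sbr)_\mu = (\n\eta,W)_\mu$. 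After grouping by $\beta$, the cyclic sum collapses to $(c,\sbl V_{F,\pi},V_{G,\pi}\sbr)_\mu + \circlearrowright$, and the commutator formula of Prop~\ref{prop:prop}, $\sbl V_{F,\pi},V_{G,\pi}\sbr = -\pi^\sharp\n(a\wedge b)$, turns this into $\sum_\circlearrowright (c\wedge\n(a\wedge b),\pi)_\mu$. This is the same cyclic combination produced by the primary term once it is rewritten, via adjointness of $\n$, as $(b\wedge c,\sbl\pi,V_{F,\pi}\sbr)_\mu = (a\wedge\n(b\wedge c),\pi)_\mu$.

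Everything then reduces to showing $\sum_\circlearrowright (a\wedge\n(b\wedge c),\pi)_\mu = 0$. Here I would apply Lemma~\ref{lem:cyc} to $\n(a\wedge b\wedge c)$ and pair the resulting identity with $\pi$. The left-hand side vanishes because $(\n(a\wedge b\wedge c),\pi)_\mu = (a\wedge b\wedge c, \sbl\pi,\pi\sbr)_\mu = 0$ by integrability. On the right-hand side the terms $(\n c)\,a\wedge b$ and its cyclic partners vanish because $\n a = \n b = \n c = 0$: indeed $\n a = \n\n\beta_{F,\pi} = 0$ since $\n^2=0$ (equivalently $\mathrm{div}\,V_{F,\pi} = \n a = 0$ by Lemma~\ref{lem:div}), which holds because $\phi_\mu$ is Poisson. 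What survives is precisely $\sum_\circlearrowright(a\wedge\n(b\wedge c),\pi)_\mu$ (after reordering the wedge of two $1$-forms), so this cyclic sum is forced to vanish; hence both the primary and Lie-derivative contributions are zero and the full cyclic sum vanishes.

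The main obstacle is bookkeeping: carrying the signs correctly through the successive adjunctions and wedge reorderings, and recognizing that the Lie-derivative terms and the primary terms assemble into the \emph{same} cyclic expression annihilated by Lemma~\ref{lem:cyc}. The conceptual crux that makes the whole computation close is the pairing of $\n^2=0$ with $\sbl\pi,\pi\sbr=0$, i.e.\ the divergence-freeness of the characteristic vector fields together with integrability of $\pi$; without $\n^2=0$ the extra terms of Lemma~\ref{lem:cyc} would obstruct the cancellation.
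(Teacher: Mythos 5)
Your proposal is correct and takes essentially the same route as the paper's proof: the $\gamma$-terms cancel by the symmetry from Lemma~\ref{lem:fd}, the primary and Lie-derivative contributions collapse via Proposition~\ref{prop:prop} and $[\mathcal{L}_X,\mathcal{L}_Y]=\mathcal{L}_{[X,Y]}$ into the same cyclic expression, which is then annihilated by Lemma~\ref{lem:cyc} together with the integrability $[\pi,\pi]=0$. Your only addition is to make explicit that the $(\n \gamma)\,\alpha\wedge\beta$ terms in Lemma~\ref{lem:cyc} drop out because $\n^2=0$ (equivalently $\mathrm{div}\,V_{F,\pi}=0$, since $\phi_\mu$ is Poisson), a fact the paper's proof uses only implicitly.
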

\begin{proof}
We must show that the Jacobiator $\{ \{F, G\}, H \} +\{ \{G,H\}, F \}+ \{ \{H, F\}, G \} = \{ \{F, G\}, H \} + \circlearrowright  $ vanishes. The terms in the Jacobiator involving the $\gamma$ tensors from Lemma~\ref{lem:fd} are dealt with separately first. We find these are given by
$$  (\gamma_{G, \pi}(\sbl \pi, V_{F,\pi} \sbr, \cdot), \sbl \pi, V_{H,\pi} \sbr)_\mu - (\gamma_{F, \pi}(\sbl \pi, V_{G,\pi} \sbr, \cdot), \sbl \pi, V_{H,\pi} \sbr)_\mu+ \circlearrowright = 0, $$
where we use the symmetries of $\gamma$. Now consider terms in the Jacobiator involving the first line of \eqref{eq:funcWHAT}. Using \eqref{eq:ways} these are given by
$$ (\n \beta_{H, \pi}, \pi^\sharp(\n (\n \beta_{F, \pi} \wedge \n \beta_{G, \pi})  ))_\mu + \circlearrowright =  -(\n \beta_{H, \pi} , \sbl V_{F,\pi}, V_{G,\pi} \sbr )_\mu + \circlearrowright,
$$
where we have used Proposition~\ref{prop:prop}. Cyclic permutations of terms in the Jacobiator involving the second line of \eqref{eq:funcWHAT} give
$$
( \mathcal{L}_{V_{H,\pi}} \mathcal{L}_{V_{G,\pi}} \beta_{F, \pi} -  \mathcal{L}_{V_{H,\pi}} \mathcal{L}_{V_{F,\pi}} \beta_{G, \pi}, \pi)_\mu + \circlearrowright = (\mathcal{L}_{\sbl V_{G,\pi}, V_{F,\pi} \sbr}  \beta_{H, \pi} , \pi)+ \circlearrowright, $$
and we have used $\mathcal{L}_X \mathcal{L}_Y - \mathcal{L}_Y \mathcal{L}_X = \mathcal{L}_{\sbl X, Y \sbr}$. Manipulating the above expression gives
$$ (\n \beta_{H, \pi}  , -\sbl V_{F,\pi},  V_{G,\pi} \sbr )_\mu + \circlearrowright,$$
which equals the contribution from the first line of \eqref{eq:funcWHAT}. Using the formula \eqref{eq:kos} for $\n$ and the fact that the anchor $\pi^\sharp$ preserves Lie brackets we can write the remaining expression as
$$2 (\n \beta_{H, \pi}  , -\sbl V_{F,\pi},  V_{G,\pi} \sbr )_\mu + \circlearrowright = 2 \int_M \pi (\n( \n \beta_{F, \pi} \wedge \n \beta_{G, \pi}) , \n \beta_{H, \pi}  )  \mu + \circlearrowright.$$ 
We then use Lemma~\ref{lem:cyc}, finding
$$
2 \int_M \pi (\n( \n \beta_{F, \pi} \wedge \n \beta_{G, \pi}) , \n \beta_{H, \pi}  )  \mu + \circlearrowright \\ = -2 ( \n  ( \n \beta_{F, \pi} \wedge \n \beta_{G, \pi} \wedge \n \beta_{H, \pi}) , \pi)_\mu.$$
Using the definition of $\n$ this is given by
$$   -2(    \n \beta_{F, \pi} \wedge\n \beta_{G, \pi} \wedge \n \beta_{H, \pi} ,\sbl \pi, \pi \sbr)_\mu, $$
which vanishes by the integrability of $\pi$.
\end{proof}
This completes the proof of Theorem~\ref{thm:jac}. 

\subsection{Varying the volume form}
The Poisson bracket $\{F, G\}_\mu$ depends on the volume form, which we have thus far held fixed. We now consider varying the volume form. Let $\nu = f \mu$, for non-zero $f \in C^\infty(M)$, be an another volume form. The modular vector field $\phi_\nu$ satisfies $d \iota_\pi \nu = \iota_{\phi_\nu} \nu$, and is related to $\phi_\mu$ by
$$\phi_\nu = \phi_\mu - H_{\log f},$$
where $H_f$ is the Hamiltonian vector field of $f$. The value of an admissible function $F$ does not change under the replacement $\mu \to \nu$, but its derivate $\beta_{F, \pi}$ does. An examination of \eqref{eq:funcd} shows that the derivative is replaced with $\beta^\prime_F = \beta_{F, \pi} / f$.
We then find the following.
\begin{lem} \label{lem:volch}
The differential $\delta_\nu \beta^\prime_F$ is given by
$$\delta_\nu \beta^\prime_F= f^{-1}\n \beta_{F, \pi}.$$
\end{lem}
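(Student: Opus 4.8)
The plan is to bypass the explicit formula $\phi_\nu = \phi_\mu - H_{\log f}$ and the Koszul--Brylinski expansion \eqref{eq:diff} entirely, working instead from the defining adjoint property of $\n = \delta_{\pi,\mu}$ and of $\delta_\nu = \delta_{\pi,\nu}$. The one piece of input needed is the relation between the two pairings: since $\nu = f\mu$ with $f$ a function, for any $p$-form $\beta$ and $p$-vector $X$ we have $(\beta, X)_\nu = \int_M (\iota_X \beta) f\mu = (f\beta, X)_\mu$, because $\iota_X(f\beta) = f\,\iota_X\beta$.

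First I would write out the two adjoint relations in the relevant degrees. Applying either differential to a $2$-form means pairing against $[\pi, X]$ with $X \in A^1(M)$, so that $[\pi, X] \in A^2(M)$ matches the $2$-form and the output is a $1$-form paired with $X$. For an arbitrary $2$-form $\beta$ and vector field $X$, the pairing identity together with the definition of $\n$ gives
$$ (\beta, [\pi, X])_\nu = (f\beta, [\pi, X])_\mu = (\n(f\beta), X)_\mu, $$
while the definition of $\delta_\nu$ and the pairing identity again give
$$ (\beta, [\pi, X])_\nu = (\delta_\nu \beta, X)_\nu = (f\,\delta_\nu \beta, X)_\mu. $$
Equating the right-hand sides yields $(\n(f\beta), X)_\mu = (f\,\delta_\nu\beta, X)_\mu$ for every vector field $X$; since the pairing of a $1$-form against all vector fields is non-degenerate, the fundamental lemma of the calculus of variations (as used in Lemma~\ref{lem:YY}) gives the pointwise identity $\delta_\nu \beta = f^{-1}\n(f\beta)$.

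It then remains only to substitute the transformed derivative $\beta^\prime_F = \beta_{F, \pi}/f$: the factors of $f$ cancel, leaving $\delta_\nu \beta^\prime_F = f^{-1}\n(f \cdot f^{-1}\beta_{F, \pi}) = f^{-1}\n\beta_{F, \pi}$, which is the claim. I expect no genuine analytic difficulty here; the only point requiring care is the degree bookkeeping in the adjoint relation, namely that one must pair against $X \in A^1(M)$ so that $[\pi, X] \in A^2(M)$ is paired against the $2$-form $f\beta$.

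As a cross-check, and as an alternative should one prefer a direct verification from \eqref{eq:diff}, I would expand $\delta_\nu(f^{-1}\beta_{F,\pi})$ using $\delta_\nu = \delta_{KB} - \iota_{\phi_\nu}$ (up to the sign $(-1)^p$) with $\phi_\nu = \phi_\mu - H_{\log f}$. Here the real obstacle is that $\delta_{KB} = [\iota_\pi, d]$ is second order, so $\iota_\pi$ is not a derivation: acting on $f^{-1}\beta_{F,\pi}$ produces a cross term $\iota_\pi(d(f^{-1}) \wedge \beta_{F, \pi})$ that must be reorganized, via the contraction identity $\iota_\pi(\alpha \wedge \omega) = \iota_{\pi^\sharp\alpha}\omega + \alpha \wedge \iota_\pi\omega$, into the contraction $\iota_{\pi^\sharp(d f^{-1})}\beta_{F,\pi}$. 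Since $\pi^\sharp(d f^{-1}) = -f^{-1}H_{\log f}$, this cross term is exactly what cancels the contribution $\iota_{H_{\log f}}(f^{-1}\beta_{F,\pi})$ coming from the change in the modular vector field. The adjoint route avoids this computation altogether, so I would take it as the primary argument and reserve the direct expansion as confirmation.
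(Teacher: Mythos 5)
Your proposal is correct, and your primary argument takes a genuinely different route from the paper's. The paper proves Lemma~\ref{lem:volch} by direct expansion: it invokes the explicit formula \eqref{eq:diff}, the transformation $\phi_\nu = \phi_\mu - H_{\log f}$, and the second-order nature of $\delta_{KB}$, writing $\delta_\nu \beta^\prime_F = \n \beta^\prime_F \pm \iota_{H_{1/f}}\beta_{F,\pi}$ and leaving the reader to cancel the correction term against the cross term in $\n(f^{-1}\beta_{F,\pi}) = f^{-1}\n\beta_{F,\pi} + \iota_{H_{1/f}}\beta_{F,\pi}$ --- which is precisely the computation you sketch in your ``cross-check'' paragraph, where you correctly identify the cancellation mechanism via $\iota_\pi(\alpha\wedge\omega) = \iota_{\pi^\sharp\alpha}\omega + \alpha\wedge\iota_\pi\omega$ and $\pi^\sharp(df^{-1}) = -f^{-1}H_{\log f}$. (Indeed your signs come out consistently, whereas the sign of the $\iota_{H_{\log f}}$ term in the paper's printed display appears reversed relative to its stated convention $\phi_\nu = \phi_\mu - H_{\log f}$; the cancellation you describe is what the paper's terse proof leaves implicit.) Your primary route instead characterizes both $\n$ and $\delta_\nu$ purely by their defining adjoint property, using only the relation $(\beta, X)_\nu = (f\beta, X)_\mu$ and the fundamental lemma of the calculus of variations (exactly as in Lemma~\ref{lem:YY}) to extract the pointwise conjugation identity $\delta_\nu\beta = f^{-1}\n(f\beta)$, after which the lemma is immediate upon substituting $\beta^\prime_F = \beta_{F,\pi}/f$. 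This buys robustness --- no dependence on the $(-1)^p$ conventions, on the Koszul--Brylinski expansion, or on the formula for $\phi_\nu$ --- and it works verbatim on $p$-forms by pairing against $(p-1)$-vectors, so you in fact prove the conjugation identity $\delta_\nu = f^{-1}\circ\n\circ f$ in all degrees, which also explains transparently why the family of brackets in the subsequent corollary is Poisson. What the paper's computational route buys in exchange is the explicit local formula for $\delta_\nu$ in terms of $\delta_{KB}$ and the modular vector field, which is the form used elsewhere in the paper. Both arguments are sound; yours is the cleaner proof of this particular statement.
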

\begin{proof}
$$\delta_\nu \beta^\prime_F = \n \beta^\prime_F - \iota_{H_{\log f}} \beta^\prime_F = \n \beta^\prime_F + \iota_{H_{1/ f}} \beta_{F, \pi}.$$
%We also have
%$$ \n \beta^\prime_F =f^{-1} \n \beta_{F, \pi} - \iota_{H_{1/f}} \beta_{F, \pi}.$$
\end{proof}
We then have the following corollary of Theorem~\ref{thm:jac} giving the form of brackets arising from different choices of volume form. 
\begin{cor}
The family of brackets on $\mathcal{P}(M)$ given by
$$ \int_M  g\left (( \n \beta_{F, \pi} \wedge \n \beta_{G, \pi} )\wedge \iota_\pi \mu  \right ),$$
with $g \in C^\infty(M)$ a non-zero function are all Poisson.\end{cor}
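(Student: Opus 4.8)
The plan is to recognize the displayed family as nothing more than the brackets $\{\cdot,\cdot\}_\nu$ of Theorem~\ref{thm:jac}, one for each volume form $\nu = g^{-1}\mu$, rewritten in terms of the fixed reference form $\mu$. The first point to record is that Theorem~\ref{thm:jac} is not tied to the particular choice of $\mu$: its proof uses only that $\mu$ is a nowhere-vanishing top form, and the commutative algebra of admissible functions is itself independent of the volume form (only the $2$-form $\beta_{F,\pi}$ representing the derivative changes, not the value of $F$). Hence for any nowhere-vanishing $f\in C^\infty(M)$ the bracket $\{F,G\}_{f\mu}$ is a Poisson bracket on the same algebra of admissible functions.

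Next I would expand $\{F,G\}_{f\mu}$ in $\mu$-data. Writing $\nu = f\mu$ and using the pairing identity $(\alpha,\pi)_\nu = \int_M \alpha\wedge\iota_\pi\nu$ for a $2$-form $\alpha$ (the contraction identity used throughout, valid for any top form), together with the $C^\infty$-linearity $\iota_\pi\nu = f\,\iota_\pi\mu$ and the substitution $\delta_\nu\beta'_F = f^{-1}\n\beta_{F,\pi}$ furnished by Lemma~\ref{lem:volch}, one obtains
$$\{F,G\}_\nu = \int_M\bigl(\delta_\nu\beta'_F\wedge\delta_\nu\beta'_G\bigr)\wedge\iota_\pi\nu = \int_M f^{-1}\bigl(\n\beta_{F,\pi}\wedge\n\beta_{G,\pi}\bigr)\wedge\iota_\pi\mu.$$
The two factors $f^{-1}$ coming from the differentials and the single factor $f$ coming from $\iota_\pi\nu$ collapse to one factor $f^{-1}$. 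Setting $g = f^{-1}$—which ranges over all nowhere-vanishing functions as $f$ does—this is exactly the bracket in the statement, so every such bracket is Poisson.

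The only genuine care needed is over the admissible range of $f$ (equivalently $g$). The symbol $\log f$ appearing in the relation $\phi_\nu = \phi_\mu - H_{\log f}$, and hence in the proof of Lemma~\ref{lem:volch}, is a shorthand: what actually enters $\delta_\nu$ is the data of $df/f$, which is smooth for every nowhere-vanishing $f$ and requires no positivity, since $df/f = d\log|f|$ is well defined for $f\neq 0$. I would therefore note explicitly that Lemma~\ref{lem:volch} and the identity above hold verbatim for any nowhere-vanishing $f$, so that the family is Poisson for every nowhere-vanishing $g$. (If one prefers to avoid the sign issue, the case $g>0$ already yields the conclusion, and the algebraic cancellation of the factors of $f$ extends the same computation to $g\neq 0$.) This bookkeeping over the sign of $f$ is the step I expect to be the main—albeit minor—obstacle; everything else is the cancellation $f^{-1}\cdot f^{-1}\cdot f = f^{-1}$ and a direct appeal to Theorem~\ref{thm:jac}.
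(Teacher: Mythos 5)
Your proposal is correct and takes essentially the same approach as the paper: the paper's proof likewise applies Theorem~\ref{thm:jac} to the volume form $\nu = f\mu$, uses Lemma~\ref{lem:volch} together with $\iota_\pi\nu = f\,\iota_\pi\mu$ to obtain the cancellation $f^{-1}\cdot f^{-1}\cdot f = f^{-1}$, and sets $g = 1/f$. Your additional care about nowhere-vanishing versus positive $f$ (reading $\log f$ as shorthand for $df/f$) is a harmless refinement of a point the paper leaves implicit.
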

\begin{proof}
Using Lemma~\ref{lem:volch}, under a change of volume form $\mu \to \nu = f \mu$ the Poisson bracket $\{F, G\}_\nu $ is given by
$$ \{F, G\}_\nu  = \int_M (\delta_\nu \beta_{F, \pi}^\prime \wedge \delta_\nu \beta_{G, \pi}^\prime )\wedge \iota_\pi \nu = \int_M \frac{1}{f} \left (( \n \beta_{F, \pi} \wedge \n \beta_{G, \pi} )\wedge \iota_\pi \mu  \right ).$$
Setting $g = 1/f$ gives the result.
\end{proof}
\section{Gauge transformations and the flow on $\mathcal{P}(M)$}

Given a choice of an admissible Hamiltonian $H$, we obtain a Hamiltonian flow on the space of Poisson structures. By \eqref{eq:ways} this flow is given by
$$ \partial_t \pi = \sbl V_{H,\pi} , \pi \sbr = \mathcal{L}_{V_{H,\pi}} \pi.$$
Let $\pi_t$, $t\geq 0$ be a one-parameter family of Poisson structures generated by such a flow, then we have the following.
\begin{prop} \label{prop:listo}
The Hamiltonian flow on $\mathcal{P}(M)$ of the bracket $\{ \cdot, \cdot \}_\mu$ with Hamiltonian function $H$ acts by exact gauge transformations on $\pi$ as
$$ \pi_t = \pi_0^{\gamma_t}, \quad \frac{d \gamma_t}{dt} = d \n \beta_{H, \pi}.$$
\end{prop}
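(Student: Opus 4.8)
The plan is to show that the flow equation $\partial_t \pi = \sbl V_{H,\pi}, \pi \sbr$, recorded just above, is term-by-term the infinitesimal form of a gauge transformation by the exact $2$-form $d\n\beta_{H,\pi}$, and then to integrate this infinitesimal relation. Recall that for a closed $2$-form $B$ the gauge (or $B$-field) transform $\pi^B$ is the Poisson structure whose graph in $TM \oplus T^\ast M$ is obtained from that of $\pi$ by adding $B$; with the sign convention $(\pi^B)^\sharp = \pi^\sharp \circ (\mathrm{id} + B^\flat \pi^\sharp)^{-1}$ it preserves the foliation $\mathcal{F}_\pi$ and obeys the cocycle rule $(\pi^{B_1})^{B_2} = \pi^{B_1 + B_2}$. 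Differentiating $\pi^{tB}$ at $t=0$ gives the infinitesimal generator $\delta_B \pi = - \pi^\sharp B^\flat \pi^\sharp$, which I will want to rewrite as a bivector so as to compare with Proposition~\ref{prop:prop}.

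The crux is the pointwise algebraic identity
$$ \delta_B \pi = \pi(B)\, \pi - \frac{1}{2}\, \pi \wedge \pi (B, \cdot), $$
valid for any $2$-form $B$; in $2$ dimensions the second term drops out and one checks directly that $-\pi^\sharp B^\flat \pi^\sharp = \pi(B)\,\pi$, while the $\pi \wedge \pi$ term supplies the remaining contraction in higher dimension. Granting this identity and setting $B = d\n\beta_{H,\pi}$, the third property of Proposition~\ref{prop:prop} reads exactly $\sbl V_{H,\pi}, \pi \sbr = \delta_{d\n\beta_{H,\pi}}\, \pi$, so the flow equation becomes $\partial_t \pi = \delta_{d\n\beta_{H,\pi}}\, \pi$. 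Establishing this identity, and in particular getting the contraction conventions and signs to line up with Proposition~\ref{prop:prop} and with the stated sign $\dot\gamma_t = + d\n\beta_{H,\pi}$, is the main obstacle; the leafwise picture gives a useful check, since on a symplectic leaf $V_{H,\pi}$ is the $\omega$-dual of $\n\beta_{H,\pi}$ and its flow changes the leaf symplectic form at the rate $d(\n\beta_{H,\pi})$, exactly matching the gauge transform.

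To integrate, let $\pi_t$ denote the Hamiltonian flow and set $\gamma_t = \int_0^t d\n\beta_{H,\pi_s}\, ds = d\!\left(\int_0^t \n\beta_{H,\pi_s}\, ds\right)$, which is exact for every $t$ (whence the transformations are exact), with $\gamma_0 = 0$ and $\dot\gamma_t = d\n\beta_{H,\pi_t}$. By the previous paragraph $\pi_t$ solves $\partial_t \rho = \delta_{\dot\gamma_t}\, \rho$ with $\rho_0 = \pi_0$. On the other hand the cocycle rule gives $\pi_0^{\gamma_{t+s}} = (\pi_0^{\gamma_t})^{\gamma_{t+s}-\gamma_t}$, and differentiating at $s=0$ shows that $\sigma_t := \pi_0^{\gamma_t}$ solves the same equation $\partial_t \sigma = \delta_{\dot\gamma_t}\, \sigma$ with the same initial value. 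Since the right-hand side $\delta_{\dot\gamma_t}\rho = -\rho^\sharp (\dot\gamma_t)^\flat \rho^\sharp$ is smooth in $\rho$, uniqueness of solutions forces $\pi_t = \sigma_t = \pi_0^{\gamma_t}$, which is the claim. I do not expect existence or invertibility of $\mathrm{id} + \gamma_t^\flat \pi_0^\sharp$ to cause trouble, because the flow $\pi_t$ is already known to exist, and it is precisely this solution that exhibits $\pi_0^{\gamma_t}$ as well defined along the flow.
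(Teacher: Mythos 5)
Your proposal is correct, but it takes a genuinely different route from the paper's. The paper's proof is essentially a one-line appeal to the Moser argument: it recalls (from Meinrenken, Section 2.3) that the exact gauge transformation along a path $\gamma_t$ of exact $2$-forms is, by construction, implemented by the flow $\phi_t$ of the vector fields $\pi_t^\sharp(a_t)$ with $\dot\gamma_t = -d a_t$, and then observes that the Hamiltonian flow $\partial_t \pi = \sbl V_{H,\pi}, \pi \sbr$ is precisely flow along $V_{H,\pi} = \pi^\sharp(\n \beta_{H,\pi})$, so the conclusion is immediate from that characterization (the sign bookkeeping is left implicit there). You bypass the diffeomorphisms entirely: you match the infinitesimal gauge action $\delta_B \pi = -\pi^\sharp B^\flat \pi^\sharp = \pi(B)\,\pi - \tfrac{1}{2}\,\pi \wedge \pi(B, \cdot)$ against the third item of Proposition~\ref{prop:prop} with $B = d\n\beta_{H,\pi}$ --- this identity is correct in the contraction conventions of Proposition~\ref{prop:prop}, and indeed explains why that item has the form it does; your leafwise sanity check is the right one and agrees with Proposition~\ref{prop:symflow} --- and then integrate using the cocycle law together with uniqueness for $\partial_t \rho = -\rho^\sharp \dot\gamma_t^\flat \rho^\sharp$, which is legitimate because this equation is pointwise algebraic and quadratic in $\rho$, so Picard--Lindel\"of applies fibrewise. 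What the paper's route buys is brevity and automatic well-definedness: the gauge transform is exhibited as the pushforward of $\pi_0$ by a genuine (volume-preserving) diffeomorphism, so no invertibility question arises. What your route buys is a self-contained algebraic argument that makes explicit use of the third property of Proposition~\ref{prop:prop} (which the paper proves but does not invoke here) and avoids the Dirac-geometric Moser machinery. The only thin spot is your dismissal of the invertibility of $\mathrm{id} + \gamma_t^\flat \pi_0^\sharp$: ``the flow already exists'' is not by itself a proof, but it can be firmed up as follows. On the maximal interval where invertibility holds, your uniqueness argument gives $\pi_t^\sharp \circ (\mathrm{id} + \gamma_t^\flat \pi_0^\sharp) = \pi_0^\sharp$; by continuity this identity persists at the endpoint $t^\ast$, and any $v$ with $(\mathrm{id} + \gamma_{t^\ast}^\flat \pi_0^\sharp)v = 0$ then satisfies $\pi_0^\sharp v = 0$, whence $v = v + \gamma_{t^\ast}^\flat \pi_0^\sharp v = 0$. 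So the set of admissible times is open and closed, and invertibility never fails as long as the Hamiltonian flow $\pi_t$ exists, completing your argument.
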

\begin{proof}
This follows from the Moser argument  for Poisson manifolds. We recall the definition of an exact gauge transformation of a Poisson structure (see Ref.~\cite{meinrenken2018poisson} Section 2.3). Let $\gamma_t$, $t\geq 0$ be a one-parameter family of exact 2-forms with $\gamma_0=0$ and $d\gamma_t / dt = -d a_t$. Then the gauge transformation $\pi_t= \pi^{\gamma_t}$ is defined as
$$ \phi_t^\ast(\pi^{\gamma_t}) = \pi,$$
where $\phi_t$ is the family of diffeomorphisms generated by the vector fields $\pi_t^\sharp(\alpha_t)$. The result follows.
\end{proof}
\begin{exam}
In the case of a linear function
$$ F = \int_M (\iota_\pi \beta) \mu, $$
with fixed 2-form $\beta$, the evolution of the Poisson structure can be explicitly calculated as $\pi_t = \pi_0^{t d \n \beta}$ (see Ref.~\cite{frejlich2017normal}, Lemma 4).
\end{exam}

The observation that the flow acts by gauge transformations implies that the foliation $\mathcal{F}_\pi$ does not change under the flow of the bracket $\{\cdot, \cdot \}_\mu$. It allows us to define several natural `Poisson subsets' of $\mathcal{P}(M)$ with respect to the bracket $\{ \cdot, \cdot \}_\mu$.
\begin{thm} \label{thm:subset}
The following subsets and their intersections are Poisson subsets of the bracket $\{\cdot, \cdot \}_\mu$:
\begin{center}
\begin{minipage}{0.8\textwidth}
Regular Poisson structures of rank $2r$,

Poisson structures with a fixed foliation,

Unimodular Poisson structures,

$\mu$-unimodular Poisson structures.

\end{minipage}
\end{center}
\end{thm}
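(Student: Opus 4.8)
The plan is to reduce everything to the description of the Hamiltonian flow obtained in Proposition~\ref{prop:listo}. A subset $S \subset \mathcal{P}(M)$ is a Poisson subset exactly when it is invariant under the Hamiltonian flow of $\{\cdot, \cdot\}_\mu$ for \emph{every} admissible Hamiltonian $H$, i.e.\ when the characteristic vector fields $V_{H,\pi}$ generate deformations tangent to $S$. By Proposition~\ref{prop:listo} the flow with Hamiltonian $H$ is realized by the isotopy $\phi_t$ of diffeomorphisms generated by $V_{H,\pi} = \pi^\sharp(\n \beta_{H, \pi})$, so that $\pi_t = (\phi_t)_\ast \pi_0$. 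By Proposition~\ref{prop:prop}, $V_{H,\pi}$ is tangent to $\mathcal{F}_\pi$ and satisfies $d \iota_{V_{H,\pi}} \mu = 0$, so each $\phi_t$ is a $\mu$-preserving diffeomorphism, $(\phi_t)_\ast \mu = \mu$. Thus the whole argument reduces to checking that each of the four conditions is invariant under pushforward by a volume-preserving isotopy whose generator is tangent to the leaves.

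First I would dispose of the two conditions that are manifestly diffeomorphism-invariant. Since $\pi_t = (\phi_t)_\ast \pi_0$, the rank of $\pi_t$ at $\phi_t(x)$ equals the rank of $\pi_0$ at $x$, so regular Poisson structures of a fixed rank $2r$ form an invariant subset. For the fixed-foliation condition, tangency of $V_{H,\pi}$ to $\mathcal{F}_\pi$ means its flow carries each leaf into itself, whence $\mathcal{F}_{\pi_t} = \mathcal{F}_{\pi_0}$; the subset of Poisson structures sharing a prescribed foliation is therefore preserved.

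The two unimodularity conditions require the naturality of the modular vector field. For any diffeomorphism $\psi$ and volume form $\nu$ one has $\iota_{\psi_\ast \pi} \psi_\ast \nu = \psi_\ast(\iota_\pi \nu)$, and applying $d$ together with the defining equation $d \iota_\pi \nu = \iota_{\phi_\nu} \nu$ gives $\phi_{\psi_\ast \nu}(\psi_\ast \pi) = \psi_\ast \phi_\nu(\pi)$. For $\mu$-unimodularity I would set $\psi = \phi_t$ and $\nu = \mu$ and use $(\phi_t)_\ast \mu = \mu$ to obtain $\phi_\mu(\pi_t) = (\phi_t)_\ast \phi_\mu(\pi_0)$, so that the condition $\phi_\mu = 0$ (equivalently $d \iota_\pi \mu = 0$) is preserved. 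For unimodularity, if $\pi_0$ admits some volume form $\nu_0$ with $\phi_{\nu_0}(\pi_0) = 0$, then the pushed-forward form $\nu_t = (\phi_t)_\ast \nu_0$ witnesses $\phi_{\nu_t}(\pi_t) = 0$, so $\pi_t$ is again unimodular. Finally, the intersection of Poisson subsets is a Poisson subset, since a flow preserving two subsets preserves their intersection.

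The main obstacle is the $\mu$-unimodular case, and it is the instructive one: unlike the first three conditions, $\mu$-unimodularity is not a diffeomorphism invariant but depends on the fixed reference form $\mu$, so its preservation relies essentially on the fact, from Proposition~\ref{prop:prop}, that the flow is by \emph{$\mu$-preserving} diffeomorphisms. A generic gauge transformation would move $\pi$ off the $\mu$-unimodular locus; it is the volume-preserving nature of $V_{H,\pi}$ that keeps it there. A secondary technical point is the care needed in the fixed-foliation argument when $\mathcal{F}_\pi$ is singular, where invariance of the foliation should be read at the level of the partition into leaves rather than as a smooth foliation chart, and where $V_{H,\pi}$ is itself a time-dependent field tangent to the time-dependent foliation $\mathcal{F}_{\pi_t}$ carried along by the same isotopy.
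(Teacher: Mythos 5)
Your proposal is correct in substance and rests on the same two facts the paper itself relies on: the paper states Theorem~\ref{thm:subset} with no separate proof, leaning entirely on the sentence preceding it and on Propositions~\ref{prop:listo} and~\ref{prop:prop} (the flow acts by exact gauge transformations, realized by a $\mu$-preserving isotopy tangent to the leaves). Your treatment of the two unimodularity conditions is exactly the intended argument, and is carried out more carefully than the paper bothers to: the naturality identity $\phi_{\psi_\ast \nu}(\psi_\ast \pi) = \psi_\ast \phi_\nu(\pi)$, combined with $(\phi_t)_\ast \mu = \mu$ (which follows from $d\iota_{V_{H,\pi_t}}\mu = 0$ at every time), handles the $\mu$-unimodular case, and pushing forward a witnessing volume form handles the unimodular case; you also correctly identify $\mu$-unimodularity as the one item that genuinely needs volume preservation rather than bare diffeomorphism invariance. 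Where you diverge from the paper is in the first two items, and this is your one soft step: you derive foliation and rank invariance from the diffeomorphism description, which forces you into the time-dependent singular-foliation issue you flag at the end, and as written the claim that ``tangency of $V_{H,\pi}$ to $\mathcal{F}_\pi$ means its flow carries each leaf into itself'' is mildly circular, since at time $t$ the field $V_{H,\pi_t}$ is tangent to $\mathcal{F}_{\pi_t}$, and the equality $\mathcal{F}_{\pi_t} = \mathcal{F}_{\pi_0}$ is precisely what is to be proved. The gauge-transformation half of Proposition~\ref{prop:listo}, which is the device the paper actually invokes (``the flow acts by gauge transformations implies that the foliation $\mathcal{F}_\pi$ does not change''), dissolves this entirely: since $\pi_t = \pi_0^{\gamma_t}$ with $\gamma_t$ a $2$-form, the anchors are related by $(\pi_0^{\gamma_t})^\sharp = \pi_0^\sharp \circ (\mathrm{id} + \gamma_t^\flat \circ \pi_0^\sharp)^{-1}$, so $\mathrm{Im}\,\pi_t^\sharp = \mathrm{Im}\,\pi_0^\sharp$ and $\mathrm{rank}\,\pi_t(x) = \mathrm{rank}\,\pi_0(x)$ \emph{pointwise}, with no flow argument and no regularity hypothesis, giving the fixed-foliation and fixed-rank items immediately, including at singular points. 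In short: your route buys a clean, self-contained naturality proof for the modular conditions, while the paper's gauge description buys foliation and rank invariance for free and is the correct patch for the one step you left unresolved; with that substitution your proof is complete.
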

\begin{rem}
Note that the set of $\mu$-unimodular Poisson structures is not, in general, a Poisson subspace of the bracket $\{ \cdot, \cdot \}_\nu$ for $\nu \neq \mu$.
\end{rem}

Of interest are steady solutions to the flow equation, that is, the set of Poisson vector fields $V_{F,\pi}$ satisfying $[V_{F,\pi}, \pi] = 0$ for vector fields $V_{F,\pi}$ arising from the bracket $\{ \cdot, \cdot \}_\mu$. 
\begin{prop} \label{lem:alg} 
The set of Poisson vector fields arising from the bracket $\{ \cdot, \cdot \}_\mu$ form a Lie subalgebra of Poisson vector fields, denoted $\mathcal{X}(\mu, \pi)$ and satisfy
$$[V_{F,\pi}, V_{G,\pi}] = V_{\{F, G\}_\mu , \pi}.$$
\end{prop}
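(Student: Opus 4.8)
The plan is to reduce the whole statement to the single identity $[V_{F,\pi},V_{G,\pi}]=V_{\{F,G\}_\mu,\pi}$, from which the subalgebra property follows formally. First I would record that the Poisson vector fields form a Lie algebra under the commutator: if $\sbl V,\pi\sbr=0$ and $\sbl W,\pi\sbr=0$ then the graded Jacobi identity for the Schouten bracket gives $\sbl\sbl V,W\sbr,\pi\sbr=\sbl V,\sbl W,\pi\sbr\sbr-\sbl W,\sbl V,\pi\sbr\sbr=0$. Since $F\mapsto\beta_{F,\pi}$ is linear and $\n,\pi^\sharp$ are linear, the assignment $F\mapsto V_{F,\pi}$ is linear, so the set $\mathcal{X}(\mu,\pi)$ of Poisson vector fields of this form is a linear subspace. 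Granting the identity, the bracket of two elements $V_{F,\pi},V_{G,\pi}\in\mathcal{X}(\mu,\pi)$ equals $V_{\{F,G\}_\mu,\pi}$, which is again of the admissible form and is Poisson (being a commutator of Poisson vector fields); hence it lies in $\mathcal{X}(\mu,\pi)$, which is exactly closure.

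To prove the identity I would start from the formula for $\beta_{\{F,G\}_\mu,\pi}$ in Lemma~\ref{lem:fd}. Writing $a=\n\beta_{F,\pi}$ and $b=\n\beta_{G,\pi}$, so that $V_{F,\pi}=\pi^\sharp a$ and $V_{G,\pi}=\pi^\sharp b$, the hypothesis that both vector fields are Poisson means $\sbl\pi,V_{F,\pi}\sbr=\sbl\pi,V_{G,\pi}\sbr=0$, and since each $\gamma$ is linear in its first argument the entire third line of \eqref{eq:funcWHAT} drops out. Applying $\pi^\sharp\n$ to what remains gives
$$V_{\{F,G\}_\mu,\pi}=\pi^\sharp\n(a\wedge b)+\pi^\sharp\n\mathcal{L}_{V_{F,\pi}}\beta_{G,\pi}-\pi^\sharp\n\mathcal{L}_{V_{G,\pi}}\beta_{F,\pi}.$$
The first term is handled immediately by the fifth property of Proposition~\ref{prop:prop}, which gives $\pi^\sharp\n(a\wedge b)=-\sbl V_{F,\pi},V_{G,\pi}\sbr$.

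The crux is the two Lie-derivative terms, and here the Poisson hypothesis is used a second time. The point is that $\n=\delta_{\pi,\mu}$ is built canonically from $\pi$ and $\mu$ alone: it is the adjoint of $X\mapsto\sbl\pi,X\sbr$ for the $\mu$-pairing, equivalently the twisted differential \eqref{eq:diff} involving $\pi$ and $\phi_\mu$. A Poisson vector field $V_{F,\pi}$ preserves $\pi$ by hypothesis and preserves $\mu$ by the second property of Proposition~\ref{prop:prop}; consequently it preserves $\phi_\mu$ as well (apply $\mathcal{L}_{V_{F,\pi}}$ to $d\iota_\pi\mu=\iota_{\phi_\mu}\mu$), so its flow fixes every ingredient of $\n$ and therefore $\mathcal{L}_{V_{F,\pi}}$ commutes with $\n$. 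This lets me replace $\n\mathcal{L}_{V_{F,\pi}}\beta_{G,\pi}$ by $\mathcal{L}_{V_{F,\pi}}b$; and because $V_{F,\pi}$ preserves $\pi$, the anchor $\pi^\sharp$ also commutes with $\mathcal{L}_{V_{F,\pi}}$, so $\pi^\sharp\n\mathcal{L}_{V_{F,\pi}}\beta_{G,\pi}=\mathcal{L}_{V_{F,\pi}}\pi^\sharp b=\sbl V_{F,\pi},V_{G,\pi}\sbr$. The same argument with $F$ and $G$ exchanged gives $\pi^\sharp\n\mathcal{L}_{V_{G,\pi}}\beta_{F,\pi}=\sbl V_{G,\pi},V_{F,\pi}\sbr=-\sbl V_{F,\pi},V_{G,\pi}\sbr$. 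Substituting the three contributions yields $V_{\{F,G\}_\mu,\pi}=-\sbl V_{F,\pi},V_{G,\pi}\sbr+\sbl V_{F,\pi},V_{G,\pi}\sbr+\sbl V_{F,\pi},V_{G,\pi}\sbr=\sbl V_{F,\pi},V_{G,\pi}\sbr$, as desired. The main obstacle I anticipate is justifying the two commutation facts cleanly, namely that $\mathcal{L}_{V_{F,\pi}}$ passes through both $\n$ and $\pi^\sharp$, since this is precisely where the Poisson condition (rather than mere admissibility of $F$) is essential; the most economical route is the naturality and flow-invariance argument above rather than a term-by-term verification against \eqref{eq:diff}.
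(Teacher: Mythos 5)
Your proposal is correct and follows essentially the same route as the paper's proof: drop the $\gamma$-terms of Lemma~\ref{lem:fd} using $\sbl\pi,V_{F,\pi}\sbr=\sbl\pi,V_{G,\pi}\sbr=0$, identify $\pi^\sharp\n(\n\beta_{F,\pi}\wedge\n\beta_{G,\pi})=-\sbl V_{F,\pi},V_{G,\pi}\sbr$ via the fifth property of Proposition~\ref{prop:prop}, and convert the two Lie-derivative terms using $[\mathcal{L}_{V_{F,\pi}},\n]=0$ and $[\mathcal{L}_{V_{F,\pi}},\pi^\sharp]=0$, arriving at the same $-1+1+1$ bookkeeping. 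Your only addition is to justify $[V_{F,\pi},\phi_\mu]=0$ and the two commutation facts (via flow-invariance of $\pi$, $\mu$, hence $\phi_\mu$), which the paper asserts without proof.
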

\begin{proof}
Using Proposition~\ref{prop:prop} we have
$$[V_{F,\pi}, V_{G,\pi} ] = -\pi^\sharp(\n ( \n \beta_{F, \pi} \wedge \n \beta_{G, \pi})).$$
Now we have $[V_{F,\pi}, \pi]=0$ and $[V_{F,\pi}, \phi_\mu]=0$ which implies $[\n , \mathcal{L}_{V_{F,\pi}}]=0$, and that
$[V_{F,\pi}, \pi^\sharp(\alpha) ] = \pi^\sharp(\mathcal{L}_{V_{F,\pi}}\alpha)$. This allows us to write
$$ [V_{F,\pi}, V_{G,\pi} ] = \pi^\sharp (\mathcal{L}_{V_{F,\pi}} \n \beta_{G, \pi}) = \pi^\sharp(\n \mathcal{L}_{V_{F,\pi}} \beta_{G, \pi}) = -\pi^\sharp(\n \mathcal{L}_{V_{G,\pi}} \beta_{F, \pi}),$$
where the last equality follows from antisymmetry of the Lie bracket. Finally, using Lemma~\ref{lem:fd}, and noting that the $\gamma$-terms vanish by our assumption $[V_{F,\pi}, \pi]=[V_{G,\pi},\pi]=0$, we have
$$V_{\{F, G\}_\mu, \pi} = \pi^\sharp(\n (\n \beta_{F, \pi} \wedge \n \beta_{G, \pi} + \mathcal{L}_{V_{F,\pi}} \beta_{G, \pi} - \mathcal{L}_{V_{G,\pi}} \beta_{F, \pi})).$$
Which is rewritten as 
$$V_{\{F, G\}_\mu, \pi}  = - [V_{F,\pi}, V_{G,\pi}] + [V_{F,\pi}, V_{G,\pi}]+[V_{F,\pi}, V_{G,\pi}] = [V_{F,\pi}, V_{G,\pi}].$$
\end{proof}
This subalgebra of Poisson vector fields is a natural object associated to the pair $(\pi, \mu)$. We now characterize it for Poisson structures on 2-manifolds and regular Poisson structures on 3-manifolds.
\begin{prop} 
On a Poisson 2-manifold with volume form $\mu$, let $\mathcal{C}(\pi, \mu)$ be the set of functions $f$ satisfying
$$ \{f, g\}_\pi = 0,$$
where $g= \iota_\pi \mu $ and $\{\cdot , \cdot\}_\pi$ is the Poisson bracket defined by $\pi$. Then the surjective map $f \mapsto - \pi^\sharp( f d \, g + d(fg))$ sends 
$\mathcal{C}(\pi, \mu)$ to the set of Poisson vector fields arising from the bracket $\{\cdot, \cdot \}_\mu$.
\end{prop}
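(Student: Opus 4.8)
The plan is to exploit the special features of dimension two, where $\pi \wedge \pi \in A^4(M)$ vanishes identically and every 2-form is a multiple of the volume form $\mu$. Write $g = \iota_\pi \mu$, and for an admissible function $F$ write its derivative as $\beta_{F, \pi} = f \mu$ for a function $f$ (possible since $\mu$ is nowhere zero). First I would compute $\n \beta_{F, \pi}$ from the explicit formula \eqref{eq:diff}. For a top-degree form $\beta = f\mu$ on a surface one has $d\beta = 0$, so $\n\beta = -d\iota_\pi \beta - \iota_{\phi_\mu}\beta$; using $\iota_\pi(f\mu) = fg$ and $\iota_{\phi_\mu}(f\mu) = f\,\iota_{\phi_\mu}\mu = f\,dg$ (from the defining equation $d\iota_\pi\mu = \iota_{\phi_\mu}\mu$), this gives $\n\beta_{F,\pi} = -(f\,dg + d(fg))$. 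Consequently $V_{F,\pi} = \pi^\sharp(\n\beta_{F,\pi}) = -\pi^\sharp(f\,dg + d(fg))$, which is exactly the vector field assigned to $f$ by the map in the statement.

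Next I would identify the Poisson condition $[V_{F,\pi},\pi]=0$ with membership in $\mathcal{C}(\pi,\mu)$. Here the key simplification is that in dimension two the term $\pi\wedge\pi(d\n\beta_{F,\pi},\cdot)$ in the third identity of Proposition~\ref{prop:prop} vanishes, leaving $[V_{F,\pi},\pi] = (\iota_\pi d\n\beta_{F,\pi})\,\pi$. From the formula above, $d\n\beta_{F,\pi} = -d(f\,dg) = -df\wedge dg$, so $\iota_\pi d\n\beta_{F,\pi} = -\pi(df,dg) = -\{f,g\}_\pi$, and hence $[V_{F,\pi},\pi] = -\{f,g\}_\pi\,\pi$ (the overall sign being immaterial). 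Since $\{f,g\}_\pi$ vanishes automatically wherever $\pi = 0$, this identity shows $[V_{F,\pi},\pi] = 0$ if and only if $\{f,g\}_\pi \equiv 0$, i.e.\ $f \in \mathcal{C}(\pi,\mu)$.

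For surjectivity I would invoke the linear admissible functions: given any function $f$, the linear function whose derivative is the fixed 2-form $f\mu$ realizes $\beta_{F,\pi} = f\mu$, so every density $f$ arises from an admissible function. Combining the two directions: each Poisson vector field produced by the bracket is some $V_{F,\pi}$ with $[V_{F,\pi},\pi]=0$; writing $\beta_{F,\pi}=f\mu$ shows it equals $-\pi^\sharp(f\,dg+d(fg))$ with $f\in\mathcal{C}(\pi,\mu)$, so it lies in the image, while conversely each $f\in\mathcal{C}(\pi,\mu)$ yields, via a linear function, a vector field of the bracket that is Poisson by the previous paragraph. This establishes that the map sends $\mathcal{C}(\pi,\mu)$ onto the set of Poisson vector fields arising from $\{\cdot,\cdot\}_\mu$.

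The main obstacle, and the step to handle with care, is the first one: getting the sign conventions in \eqref{eq:diff} right for a top-degree form and correctly producing the $f\,dg$ contribution from the modular term $\iota_{\phi_\mu}\beta_{F,\pi}$, since this is precisely what matches the $d(fg)$ and $f\,dg$ pieces of the map. A secondary point requiring attention is the behaviour on the zero locus of $\pi$, where the identity $[V_{F,\pi},\pi] = -\{f,g\}_\pi\,\pi$ must be read as an equality of 2-vector fields, so that the Poisson condition is genuinely equivalent to $\{f,g\}_\pi \equiv 0$ on all of $M$.
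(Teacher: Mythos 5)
Your proof is correct and takes essentially the same route as the paper: writing $\beta_{F,\pi}=f\mu$, computing $\n \beta_{F,\pi} = -(f\,dg + d(fg))$ from \eqref{eq:diff}, and using the third identity of Proposition~\ref{prop:prop} together with $\pi\wedge\pi=0$ in dimension two to get $[V_{F,\pi},\pi] = -\{f,g\}_\pi\,\pi$. Your two additional points --- the explicit surjectivity argument via linear admissible functions, and reading the identity on the zero locus of $\pi$ so that the condition is genuinely $\{f,g\}_\pi\equiv 0$ --- simply make explicit what the paper's proof leaves implicit.
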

\begin{proof}
In this case the modular vector field satisfies $\iota_{\phi_\mu} \mu = d g$. The derivative of an admissible function $F$ is given by $\beta_{F, \pi} = f \mu$, for some function $f$. Then we find $\n \beta_{F, \pi} = - d (fg) - f dg$, so that $V_{F,\pi} = -\pi^\sharp(d (fg) + f dg)$. Then using Proposition~\ref{prop:prop} we find
$$ \partial_t \pi = -\pi(df \wedge dg) \pi = - \{f,g\}_\pi \pi $$
setting $\partial_t \pi$ to zero gives the result.
\end{proof}

\begin{prop} 
On a regular Poisson 3-manifold, the set of Poisson vector fields arising from the bracket $\{\cdot, \cdot \}_\mu$ is non-trivial if and only if the Poisson structure is unimodular.
\end{prop}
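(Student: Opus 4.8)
The plan is to use that a regular Poisson structure on a three-manifold has rank two, so that $\pi\wedge\pi\in A^4(M)=0$ identically. Feeding this into the third identity of Proposition~\ref{prop:prop} kills the term $\frac{1}{2}\pi\wedge\pi(d\n\beta_{F,\pi},\cdot)$ and leaves
$$\sbl V_{F,\pi},\pi\sbr=(\iota_\pi\, d\n\beta_{F,\pi})\,\pi.$$
Since $\pi$ is nowhere zero, $V_{F,\pi}$ is a Poisson vector field if and only if the single function $\iota_\pi\, d\n\beta_{F,\pi}$ vanishes. The first step is thus to reduce a tensorial obstruction to one scalar equation.

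Next I would put $V_{F,\pi}$ into a normal form adapted to $(\pi,\mu)$. Setting $R=\iota_\pi\beta_{F,\pi}\in C^\infty(M)$ (an arbitrary function, realised e.g.\ by a linear admissible function) and using the explicit formula \eqref{eq:diff} together with $d\iota_\pi\mu=\iota_{\phi_\mu}\mu$, a short computation gives on a rank-two three-manifold
$$V_{F,\pi}=R\,\phi_\mu-\pi^\sharp(dR),\qquad \iota_\pi\, d\n\beta_{F,\pi}=-\,\mathcal{L}_{\phi_\mu}R.$$
The mechanism is that the $\iota_\pi d\beta_{F,\pi}$ part of $\n\beta_{F,\pi}$ is a multiple of $\theta=\iota_\pi\mu$, hence lies in $\ker\pi^\sharp$ and drops out, while the remaining two terms produce the Hamiltonian field $\pi^\sharp(dR)$ and the modular contribution $R\phi_\mu$. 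Consequently the steady states arising from the bracket are exactly the fields $V_{F,\pi}=R\phi_\mu-\pi^\sharp(dR)$ for which $R$ is a first integral of the modular flow, $\mathcal{L}_{\phi_\mu}R=0$; moreover one checks $V_{F,\pi}\wedge\phi_\mu=(\mathcal{L}_{\phi_\mu}R)\,\pi$, so every steady state is pointwise proportional to $\phi_\mu$.

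With this normal form the statement becomes geometric, and I would phrase unimodularity through its avatar from the introduction: $\pi$ is unimodular iff $\mathcal{F}_\pi$ is cut out by a closed one-form, i.e.\ iff there is $f>0$ with $d(f\theta)=0$, equivalently $\phi_\mu=H_{\log f}=\pi^\sharp(d\log f)$ is Hamiltonian. For the forward implication I would take such a global potential $g=\log f$ and use $\mathcal{L}_{\phi_\mu}R=\{g,R\}_\pi$, so that any $R$ functionally dependent on $g$ solves the steady-state equation; choosing $R=g$ produces $V_{F,\pi}=(g-1)\,\pi^\sharp(dg)$, a steady state beyond the modular direction. The $\mu$-unimodular case $\phi_\mu=0$ is even simpler: the constraint is vacuous and every $\pi^\sharp(dR)$ is a steady state. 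The converse I would attack contrapositively, showing that if no closed one-form defines $\mathcal{F}_\pi$ then $\mathcal{L}_{\phi_\mu}R=0$ cannot be solved by anything generating a steady direction beyond the canonical modular field.

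The main obstacle is precisely this converse, which is global. Locally $\phi_\mu$ is always Hamiltonian --- in Darboux-transverse coordinates with $\mu=\rho\,dx\wedge dy\wedge dz$ one has $\phi_\mu=-\pi^\sharp(d\log\rho)$ --- so first integrals of the modular flow, and the associated steady states, always exist in charts; the difficulty is that the local potentials $\log\rho$ need not patch to a global function, and this failure is measured exactly by the modular class. The heart of the proof is therefore to show that non-triviality of the steady algebra, once the ever-present modular field is accounted for, detects precisely the vanishing of the modular class, equivalently the existence of a global integrating factor for $\theta$. Controlling how the leafwise-Hamiltonian data of $\phi_\mu$ propagates transversally, and separating genuine new steady states from $\phi_\mu$ itself, is where the regular three-dimensional geometry carries the argument.
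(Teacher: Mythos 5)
Your reduction and normal form are correct, and in fact more explicit than the paper's own opening: on a regular Poisson 3-manifold $\pi \wedge \pi = 0$ kills the second term of the third identity in Proposition~\ref{prop:prop}; the $\iota_\pi d\beta_{F,\pi}$ contribution to $\n \beta_{F,\pi}$ is a multiple of $\iota_\pi\mu \in \ker \pi^\sharp$ and drops out; and with $R = \iota_\pi \beta_{F,\pi}$ one indeed gets $V_{F,\pi} = R\phi_\mu - \pi^\sharp(dR)$ and $[V_{F,\pi},\pi] = -(\mathcal{L}_{\phi_\mu}R)\,\pi$, which matches the paper's (terser) statement that steadiness forces $\mathcal{L}_{\phi_\mu} f = 0$ for $f = \pi(\beta_{F,\pi})$. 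Your forward direction is also sound and is essentially the paper's: the paper writes $\phi_\mu = H_{\log g}$, takes $\beta_{F,\pi} = f g \sigma + \rho$ with $\iota_\pi \sigma = 1$, and chooses $f$ Poisson-commuting with $g$, which is your ``choose $R$ with $\{g,R\}_\pi = 0$.''

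The genuine gap is the converse, which you explicitly defer (``the heart of the proof is therefore to show\ldots'') and never carry out: a description of the obstacle is not an argument. The paper closes this direction with a concrete mechanism for which you have no counterpart. It introduces the modular form $\eta$, defined by $\iota_{\phi_\mu}\mu = \iota_\pi \mu \wedge \eta$ (so $\phi_\mu = \pi^\sharp(\eta)$), upgrades the scalar constraint $\pi(df,\eta) = 0$ to the exact relation $\eta = df + g\alpha$, and then runs the integrating-factor computation: $d\alpha = \alpha \wedge df$ gives $d(e^f \alpha) = 0$, so $e^f\alpha$ is a closed defining form and $\pi$ is unimodular with respect to $e^f\mu$. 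That upgrade is the entire content of the ``only if'' direction, and it is nontrivial precisely because $\pi(df,\eta)=0$ only says the leafwise parts of $df$ and $\eta$ are pointwise \emph{proportional}, not equal; your plan to argue ``contrapositively'' from $\mathcal{L}_{\phi_\mu}R = 0$ never engages with how to pass from proportionality to $\eta = df + g\alpha$ globally. Worse, your own formulas expose why the pointwise information cannot suffice and why the statement needs care: taking $R$ constant (or any Casimir, so $\pi^\sharp(dR)=0$) gives the nonzero steady solution $V_{F,\pi} = R\,\phi_\mu$ whenever $\phi_\mu \neq 0$, unimodular or not, so ``non-trivial'' must be construed to exclude these ever-present modular solutions before your contrapositive can even be formulated, let alone proved. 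Your closing paragraph correctly names both issues --- separating genuine steady states from $\phi_\mu$, and the global patching of local potentials $\log\rho$ --- but resolves neither; as submitted, the proposal establishes only one implication of the proposition.
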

\begin{rem}
This is equivalent to the foliation $\mathcal{F}_\pi$ being defined by a closed 1-form.
\end{rem}
\begin{proof}

Consider the subset of regular Poisson structures on a 3-manifold $M$. We now seek steady solutions of the flow equation. This implies there is a function $f=  \pi(\beta_{F, \pi})$ satisfying $\mathcal{L}_{\phi_\mu} f= 0$. Now consider the modular form~\cite{weinstein1997modular} $\eta$, defined by $\iota_{\phi_\mu} \mu = \iota_{\pi} \mu \wedge \eta$. The modular form is defined only up to addition of a term of the form $g \alpha$ for a function $g$, and $d \eta = \alpha \wedge \gamma$ for some 1-form $\gamma$, so that the restriction of $\eta$ to the leaves of the symplectic foliation $\mathcal{F}_\pi$ is closed. Hence $\eta$ defines a class $[\eta]$ in the foliated cohomology group $H^1(\mathcal{F}_\pi)$~\cite{candel2000foliations}. Now the existence of a non-trivial steady solution to the flow equation implies $f$ satisfies $\pi(df, \eta) = 0$, and furthermore that the Hamiltonian vector field $H_f$ commutes with the modular vector field, {i.e.} $[\phi_\mu, H_f ] =0$. In particular, this implies $\eta = df + g \alpha$. We then find that $d \alpha = \alpha \wedge df$ so that $d(e^f \alpha) = df \wedge e^f \alpha + e^f \alpha \wedge df = 0$, hence the form $ \iota_\pi e^f \mu $ is closed and the Poisson structure is unimodular with respect to the volume form $e^f \mu$. 

Now suppose $\pi$ is unimodular then the modular vector field is Hamiltonian, given by $\phi_\mu = H_{\log g}$ for some non-zero function $g$. Then note that the modular form can be chosen as $\eta = d \log g$. We may write $\beta_{F, \pi} = f g \sigma + \rho$, where $\alpha \wedge \rho = 0$ and $\sigma$ is a symplectic form on the leaves of $\mathcal{F}_\pi$, satisfying $\iota_\pi \sigma = 1$. Then the flow equation becomes 
$$\partial_t \pi =  \{f, g\}_\pi \pi.$$
We may then choose any function $f$ Poisson commuting with $g$ to obtain a non-trivial steady solution with a given $\beta_{F, \pi}$, and hence a linear admissible function yielding said steady solution.

\end{proof}

\begin{rem}
Since there are no foliations on $S^3$ defined by closed 1-forms, this implies that there are no non-trivial steady solutions to the flow equation of the bracket $\{ \cdot, \cdot \}_\mu$ for regular Poisson structures on $S^3$.
\end{rem}
\begin{rem}
Recall the definition of the Godbillon-Vey invariant $GV$ (see Example~\ref{ex:gv3}). It is easy to see that $GV=0$ if $\eta = df + g \alpha$, hence $GV \neq 0$ is an obstruction to the existence of non-trivial steady solutions to the flow equation of the bracket $\{ \cdot, \cdot \}_\mu$ for regular Poisson structures on 3-manifolds. It is known that the Godbillon-Vey invariant obstructs unimodularity on Poisson 3-manifolds~\cite{weinstein1997modular, guillemin2011codimension}, here we find that it obstructs the existence of steady solutions of the flow equation. This mirrors its application in ideal fluids, where under certain conditions it provides an obstruction to steady flow~\cite{machon2020godbillonb}. 
\end{rem}

\section{Casimirs} \label{sec:cas}
A Casimir of the bracket is an admissible function $C$ satisfying $\{ C, F \}_\mu = 0$ for all admissible functions $F$. From \eqref{eq:ways} we see that the condition for $C$ to be a Casimir is for $V_{C, \pi}$ to be Poisson,
$$ [ V_{C, \pi}, \pi ] = 0.$$
%for all $\pi$. We first give the following geometric characterisation of certain Casimir invariants. 
%\begin{prop}
%Let $C$ be a smooth admissible function on $\mathcal{P}(M)$ depending only on the symplectic foliation of $M$ defined by $\pi$. Then $C$ is a Casimir of the bracket $\{ \cdot, \cdot \}_\mu$. 
%\end{prop}
%\begin{proof}
%This follows from Theorem~\ref{thm:subset}.
%\end{proof}
%We can consider the change in a Casimir (of the form $\n \beta_C=0$) under a deformation of $\pi$ not arising from the bracket $\{ \cdot, \cdot \}_\mu$. This is given by $ ( \beta_C , \dot \pi )_\mu$. Now $\dot \pi$ defines a class in the second Poisson cohomology group $[ \dot \pi ] \in H^2(M, \pi)$, and $[ \beta_C ]$ is a class in the homology group $H_2(M, (\pi, \mu))$, and we have the following result.
%\begin{prop}
%The rate of change of $C$ under the deformation $\dot \pi$ is given by the pairing between $[ \dot \pi ]$ and $[ \beta_C ]$ defined in \eqref{eq:int}.
%\end{prop}
We now give two examples of Casimir invariants.
\begin{exam}\label{exam:volume} Symplectic volume. Let $M$ be a $2m$-dimensional manifold, and take the bracket on the Poisson subspace $\mathcal{S}(M)$, the space of symplectic structures on $M$, {\it i.e}.\ non-degenerate closed 2-forms $\omega$. This defines the Poisson structure via the relation $ \omega^\sharp(\pi^\sharp(\cdot )) = -{\rm Id}$, where $\omega^\sharp:TM \to T^\ast M$ is the map induced by the symplectic form. The symplectic volume form $\nu$ is related to our chosen volume form $\mu$ by the function $f$, $\nu = \omega^{m}/(m!)= f \mu$. Then the symplectic volume is given as
$$ S = \int_M \frac{\omega^{m}}{(m!)}. $$
Since the symplectic volume is invariant under volume diffeomorphism, and the flow of the bracket acts by diffeomorphism, it must be preserved (and is hence a Casimir). However, we can give an explicit calculation. First note that the equation $\iota_\pi(\omega) = -m$ implies $\iota_{\dot \pi} \omega + \iota_{ \pi} \dot \omega = 0$. Finally note that $\iota_\pi \omega^m = - m^2 \omega^{m-1}$. This implies that the derivative is given by $\beta_S =  f \omega/m$. We then have the short calculation
$$ \delta_{\nu, \pi} \beta_{S,\pi}= \iota_\pi d \beta_{S,\pi} -  d \iota_\pi \beta_{S,\pi} = df.$$
Since $\pi$ is  $\nu$-unimodular, the modular vector field for $\mu$ must be Hamiltonian with respect to $\omega$, hence $\iota_{\phi_\mu} \omega = dg$ for some function $g$, and we have
$$ \n \beta_{S,\pi} = df -dg.$$
This implies that $V_S$ is a Hamiltonian vector field, hence $[V_{S,\pi}, \pi] = 0$ and so $S$ is a Casimir. 
\end{exam}
\begin{exam} \label{ex:gv3} Godbillon-Vey invariant. Since a Poisson structure $\pi$ defines a foliation $\mathcal{F}_\pi$, diffeomorphism invariants of $\mathcal{F}_\pi$ are also invariants of $\pi$. The Godbillon-Vey invariant is one such example (as considered by Mikami~\cite{mikami2000godbillon}). Let $M$ be a 3-manifold, and $\mathcal{P}_{2}(M)$ the space of regular (rank 2) Poisson structures on $M$. Then the non-zero 1-form $\alpha = \iota_{\pi} \mu $ defines the foliation $\mathcal{F}_\pi$, satisfying the integrability condition $\alpha \wedge d \alpha = 0$. This implies that there is a 1-form $\eta$ such that $d \alpha = \alpha \wedge \eta$ and then
$$ GV = \int_M \eta \wedge d \eta,$$
is the Godbillon-Vey invariant~\cite{candel2000foliations}, depending only on the foliation defined by $\alpha$ (under the transformations $\alpha \to f \alpha$ for $f$ a non-zero function, and $\eta \to \eta + h \alpha$ for $h$ a function, $\eta \wedge d \eta$ changes by an exact form). In order to formulate $GV$ as an admissible function we need to extend it to all non-zero 2-vector fields in $A^2(M)$. To this we pick a Riemannian metric $g$, and let $\ast$ be the associated Hodge star. Then we may pick $\eta$ as
\begin{equation} \label{eq:eta}
\eta_g = \frac{\ast( \ast d \alpha\wedge \alpha  )}{|\alpha|^2}.
\end{equation}
When $\alpha \wedge d \alpha = 0$, $\eta_g$ serves as a choice of $\eta$ for the Godbillon-Vey invariant, when $\alpha \wedge d \alpha \neq 0$, the integral of $\eta_g \wedge d \eta_g$ is a metric-dependent number. In what follows we restrict to $\mathcal{P}(M)$, so may use $\eta$ without direct reference to \eqref{eq:eta} To compute the derivative of $GV$ we consider two variations of the Poisson structure (see also~\cite{machon2020godbillon} for a discussion of this calculation). The variation of $GV$ with respect to $\pi$ can be computed by first noting that $d \eta = \alpha \wedge \gamma$ for some 1-form $\gamma$. Then observe that $d \alpha = \alpha \wedge \beta$ implies $\dot{d \alpha} = \dot{\alpha} \wedge \eta + \alpha \wedge \dot{\eta}$. This allows us to write
$$ \dot{GV} = 2 \int_M \dot {\eta }\wedge d \eta = 2 \int_M \dot{\eta} \wedge \alpha \wedge \gamma = 2 \int_M (\dot \alpha \wedge \eta - \dot{d \alpha})  \wedge \gamma = 2 \int_M \dot \alpha \wedge (\eta \wedge \gamma - d \gamma)$$
So that the derivative of $GV$ can be identified with the 2-form $\chi = 2(\eta \wedge \gamma - d \gamma)$. Now we compute the derivative $\n$ of $\chi$. Note that $\chi$ satisfies $d \chi = \eta \wedge \sigma$ and $\alpha \wedge \chi  = 0$, the second of these implies $\iota_\pi \chi= 0$. 
A direct computation then shows that
$$ \n \chi = \iota_\pi(\eta \wedge \chi) - \iota_{\phi_\mu} \chi =0$$
which follows as $\phi_\mu$ is defined as $\pi^\sharp(\eta)$ for a regular Poisson 3-manifold. Hence $GV$ is a Casimir of the bracket $\{\cdot , \cdot \}$. Note that while the Godbillon-Vey invariant can be defined for any corank 1 Poisson structure, it gives an element in the third de Rham cohomology group in all cases. In order to define it as an admissible function for dimensions $n >3$, we pick a fixed de Rham cohomology class $[\tau] \in H^{n-3}(M; \mathbb{R})$ with representative $n-3$ form $\tau$. Then if $\pi$ is a regular corank 1 Poisson structure of rank $2m= n-1$, the 1-form $\alpha = \iota_{\pi^m} \mu$ defines a codimension-1 foliation of $M$. Then we can define the admissible function
$$ GV_\tau = \int_M \eta_g \wedge d \eta_g \wedge \tau$$
which computes the de Rham class $GV \smile [\tau] \in H^n(M, \mathbb{R})$. While we do not give a direct proof, this will be a Casimir. More generally there are further Godbillon-Vey invariants of codimension $q$ foliations, living in the de Rham cohomology group $H^{1+2q}$. For these to give Casimirs of Poisson structures akin to the case 3-manifolds we require $n= 1+2q$, and $n-q=2s$. Hence we find Casimirs of regular rank $2s$ Poisson structures on manifolds of dimension $4s-1$. In other dimensions one can take the cup product with some fixed cohomology class.
\end{exam}

\section{The symplectic case}

When $\pi$ defines a symplectic structure additional phenomena arise. Theorem~\ref{thm:subset} tells us that Poisson structures which are $\mu$-unimodular are a Poisson subset of the bracket $\{\cdot, \cdot \}_\mu$. It further tells us that non-degenerate (symplectic) $\mu$-unimodular Poisson structures form a Poisson subset of the bracket $\{\cdot, \cdot \}_\mu$. In this section we explore the bracket on this subset. Let $\mathcal{S}(M)$ be the space of Poisson structures on $M$ that are symplectic, {i.e.} for a $2m$-dimensional manifold $M$, we have
$$ \mathcal{S}(M) = \{ \pi \in \mathcal{P}(M)\, | \, \pi^m \neq 0 \}.$$
Now define $\mathcal{S}_\mu(M)$ as the subspace of $\mathcal{S}(M)$ of those $\pi \in \mathcal{S}(M)$ which are $\mu$-unimodular.
$$ \mathcal{S}_\mu(M) = \{ \pi \in \mathcal{S}(M)\, | \, d(\iota_\pi \mu)  =0 \}.$$
This can be characterised in terms of the symplectic volume form induced by $\pi$. For a given $\pi \in \mathcal{S}_\mu(M)$ let $\omega$ be the associated symplectic form, defined as before by $\pi^\sharp \circ \omega^\sharp = - {\rm Id}$.
\begin{lem}
For a Poisson structure $\pi \in S_\mu(M)$, the associated symplectic volume form $\nu = \omega^m / m!$ is equal to $c \mu$, where $c \in \mathbb{R}$ is constant. 
\end{lem}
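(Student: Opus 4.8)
The plan is to write $\nu = f\mu$ for a unique nowhere-vanishing $f \in C^\infty(M)$ (both sides being top-degree volume forms) and to show that $f$ is constant. The key observation is that $\pi$ is unimodular with respect to \emph{both} volume forms: by hypothesis $\pi$ is $\mu$-unimodular, so $\phi_\mu = 0$, and I claim the symplectic volume $\nu$ likewise kills the modular vector field, $\phi_\nu = 0$. Granting the claim, I would invoke the transformation law recorded in the subsection on varying the volume form, $\phi_\nu = \phi_\mu - H_{\log f}$, which forces $H_{\log f} = 0$; since $\pi$ is symplectic the anchor $\pi^\sharp$ is a fibrewise isomorphism, so $d \log f = 0$ and $f$ is constant on the connected manifold $M$.

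To establish $\phi_\nu = 0$, equivalently $d(\iota_\pi \nu) = 0$, I would reuse the identities from Example~\ref{exam:volume}, namely $\iota_\pi \omega = -m$ and $\iota_\pi \omega^m = -m^2 \omega^{m-1}$. These give
$$\iota_\pi \nu = \frac{1}{m!}\,\iota_\pi \omega^m = -\frac{m}{(m-1)!}\,\omega^{m-1},$$
which is closed because $\omega$ is. By the defining relation $d\iota_\pi \nu = \iota_{\phi_\nu}\nu$ and nondegeneracy of $\nu$, this yields $\phi_\nu = 0$.

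Assembling the pieces, $\phi_\mu = 0$ and $\phi_\nu = 0$ turn the transformation law into $H_{\log f} = \pi^\sharp(d \log f) = 0$, and invertibility of $\pi^\sharp$ gives $d\log f = 0$, hence $f \equiv c$ for some $c \in \mathbb{R}$ and $\nu = c\mu$. The computation is routine; the only real content is recognizing that fixing the density by both $\mu$-unimodularity and the symplectic volume forces $\log f$ to be a Casimir of $\pi$, and that a symplectic Poisson structure admits only locally constant Casimirs. The mildest obstacle is connectedness of $M$, needed to pass from locally constant to globally constant; under the paper's standing conventions this is harmless (otherwise $c$ would merely be constant on each component).
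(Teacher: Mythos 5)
Your proof is correct and is essentially the paper's own argument in repackaged form: the paper writes $\mu = f\nu$ and computes directly that $d\iota_\pi \mu = df \wedge \omega^{m-1}/(m-1)! = \iota_{H_f}\mu$, which is exactly what one gets by unfolding your appeal to the transformation law $\phi_\nu = \phi_\mu - H_{\log f}$ together with your observation that $\iota_\pi \nu$ is a constant multiple of the closed form $\omega^{m-1}$ (i.e.\ $\phi_\nu = 0$). Both versions close with the identical key step --- nondegeneracy of $\pi^\sharp$ forces the differential of the conformal factor to vanish --- and your connectedness caveat applies equally to the paper's proof, so it is harmless.
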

\begin{proof}
The volume form $\mu$ is equal to $f \nu$, where $f$ is a non-zero smooth function. We then have
$$ \iota_\pi \mu = f \iota_\pi \nu = f \omega^{m-1}/(m-1)!$$ 
Hence
$$ d \iota_\pi \mu = df \wedge  \omega^{m-1}/(m-1)! = \iota_{\pi^\sharp(df)} \mu$$
This vanishes only if the Hamiltonian vector field $H_f = \pi^\sharp(df)$ vanishes. Since $\pi$ is symplectic, this implies $df=0$, hence $f=1/c$ is a (non-zero) constant.
\end{proof}
In the symplectic language, the operators we have been using are known differently. The derivative $\n$ is equal to the symplectic derivative $d^\Lambda$ defined in terms of the Lefschetz operator $\Lambda = \iota_\pi: \Omega^k(M) \to \Omega^{k-2}(M)$ by contraction with the Poisson tensor (for more detail see, for example~\cite{tseng2012cohomology}, though note sign conventions differ). The symplectic derivative is defined as $d^\Lambda = [d,\Lambda]$, with sign conventions 
$$ \n = (-1)^k d^\Lambda,$$
where we consider the action of $\n$ on $k$-forms. Henceforth we will replace $\n$ with $d^\Lambda$. For a given $\pi \in \mathcal{S}(M)$, any $X \in A^2(M)$ satisfying $[\pi, X]=0$ arises as the linear term in a formal deformation of $\pi$ (when $\pi$ is not symplectic this is not always true, and there are obstructions in Poisson cohomology, see for example Ref.~\cite{da1999geometric} Section 18.6). Moreover, a given infinitesimal deformation $X \in A^2(M)$ will preserve $\mu$-unimodularity provided $d \iota_X \mu  =0 $. This lack of obstruction allows us to define the tangent space of $\mathcal{S}_\mu(M)$ at $\pi$ as
$$ T_\pi \mathcal{S}_\mu (M) = \{ X \in A^2(M)\, | \, [\pi, X]=0, d (\iota_X\mu) = 0 \}.$$
We once again define the space of admissible functions analogously to Section~\ref{sec:admiss}. Recalling the definition of the vector spaces $\mathcal{A}$ and $\mathcal{A}_0$ of functions, we define the subspace $\mathcal{A}_S \subset \mathcal{A}$ as those functions that restrict to zero on 2-vector fields $X$ that satisfy $d\iota_X \mu = 0$. Let $D(\mu) \subset A^2(M)$ be the space of such 2-vector fields. Then we define
$$ \mathcal{A}_\mu = \{F \in \mathcal{A}\, | \, X \in D(\mu) \Rightarrow F(X)=0\}.$$
\begin{defn}
A primitive admissible function $F: \mathcal{S}_\mu (M) \to \mathbb{R}$ is an element of the quotient space $\mathcal{A}/(\mathcal{A}_0+ \mathcal{A}_\mu)$. An admissible function is an element of the commutative algebra generated by the primitive admissible functions.\end{defn}
The additional structure of the symplectic case allows us to characterize the derivatives of admissible functions with greater precision. We start by defining the admissible cotangent space $T^\ast_\pi \mathcal{S}_\mu(M)$ as the vector space spanned by the possible derivatives of admissible functions.
\begin{prop} \label{prop:admiss}
The admissible cotangent space can be identified as
$$ T^\ast_\pi \mathcal{S}_\mu(M) \cong \Omega^2(M) / (d \Omega^1(M) + d^\Lambda \Omega^3(M)).$$
\end{prop}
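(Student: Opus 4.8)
The plan is to realise $T^\ast_\pi\mathcal{S}_\mu(M)$ as an honest quotient of $\Omega^2(M)$ and then to compute the subspace by which we quotient. Every $2$-form is a derivative: the linear function $X\mapsto(\beta,X)_\mu$ has derivative $\beta$, so the derivatives of representatives in $\mathcal{A}$ already span all of $\Omega^2(M)$. Since a primitive admissible function on $\mathcal{S}_\mu(M)$ is a class in $\mathcal{A}/(\mathcal{A}_0+\mathcal{A}_\mu)$, its derivative is well defined only modulo $W:=\{\beta_{G,\pi}\mid G\in\mathcal{A}_0+\mathcal{A}_\mu\}\subset\Omega^2(M)$. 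Hence $T^\ast_\pi\mathcal{S}_\mu(M)\cong\Omega^2(M)/W$, and the whole statement reduces to the identity $W=d\Omega^1(M)+d^\Lambda\Omega^3(M)$. Throughout I use that $\mu$-unimodularity forces $\phi_\mu=0$, so that $\n=(-1)^p d^\Lambda$ with no interior-product correction.

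I would prove the identity by splitting $W=W_0+W_\mu$ with $W_0=\{\beta_{G,\pi}\mid G\in\mathcal{A}_0\}$ and $W_\mu=\{\beta_{G,\pi}\mid G\in\mathcal{A}_\mu\}$, and showing $W_0=d^\Lambda\Omega^3$, $W_\mu=d\Omega^1$. For the inclusions $\supseteq$ I exhibit null functions with prescribed derivatives. Given $\sigma\in\Omega^3(M)$, the function $G_\sigma(X)=\tfrac12(\sigma,[X,X])_\mu$ lies in $\mathcal{A}_0$ (it vanishes whenever $[X,X]=0$), and since the Schouten bracket of two $2$-vectors is symmetric, its derivative is $(\sigma,[\pi,\dot X])_\mu=(\n\sigma,\dot X)_\mu$ by the defining adjointness of $\n$; thus $\beta_{G_\sigma,\pi}=\n\sigma=-d^\Lambda\sigma\in d^\Lambda\Omega^3$. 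Given $\alpha\in\Omega^1(M)$, the function $G_\alpha(X)=\int_M\alpha\wedge d\iota_X\mu$ lies in $\mathcal{A}_\mu$ (it vanishes on $D(\mu)$), and integrating by parts shows $G_\alpha(X)=(d\alpha,X)_\mu$, so $\beta_{G_\alpha,\pi}=d\alpha\in d\Omega^1$.

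For the reverse inclusions I use that a null function restricts to zero on a linear slice through $\pi$, so its derivative annihilates the corresponding tangent space. A function in $\mathcal{A}_\mu$ vanishes on the linear space $D(\mu)=\ker(X\mapsto d\iota_X\mu)$, whence $\beta_{G,\pi}$ lies in the annihilator of $D(\mu)$; transposing the constraint map $X\mapsto d\iota_X\mu$ against $(\cdot,\cdot)_\mu$ gives exactly $d\colon\Omega^1\to\Omega^2$, so that annihilator is $d\Omega^1$ and $W_\mu\subseteq d\Omega^1$. A function in $\mathcal{A}_0$ vanishes on $\mathcal{P}(M)$, whose tangent space at $\pi$ is $\ker([\pi,\cdot])$; here I invoke the unobstructedness of symplectic deformations (Ref.~\cite{da1999geometric}, Section 18.6), so that every cocycle integrates to a genuine curve in $\mathcal{P}(M)$ and the derivative indeed annihilates all of $\ker([\pi,\cdot])$. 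Since $\n$ is by definition the adjoint of $[\pi,\cdot]$, this annihilator is $\n\Omega^3=d^\Lambda\Omega^3$, giving $W_0\subseteq d^\Lambda\Omega^3$.

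The main obstacle is upgrading the weak necessary condition of Lemma~\ref{lem:YY}—that $\beta_{G,\pi}$ is merely $d^\Lambda$-closed for $G\in\mathcal{A}_0$—to the membership $\beta_{G,\pi}\in d^\Lambda\Omega^3$ needed here, and likewise pinning the $\mathcal{A}_\mu$ annihilator to $d\Omega^1$ rather than to a larger kernel. Both reduce to closed-range statements: $d$ has closed range by Hodge theory, while $\n=\pm d^\Lambda$ is, by the Remark in Section~\ref{sec:diff}, conjugate under the volume isomorphism $\ast\colon X\mapsto\iota_X\mu$ to the Lichnerowicz differential $[\pi,\cdot]$ (the $\iota_{\phi_\mu}$ correction vanishing by $\mu$-unimodularity), which in the symplectic case computes de Rham cohomology and so has closed range. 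Granting these, the two annihilators are exactly $d^\Lambda\Omega^3$ and $d\Omega^1$, yielding $W=d\Omega^1+d^\Lambda\Omega^3$ and the stated isomorphism. The resulting space $\Omega^2(M)/(d\Omega^1(M)+d^\Lambda\Omega^3(M))$ is precisely the cochain group underlying the Tseng--Yau group $H^2_{dd^\Lambda}$~\cite{tseng2012cohomology}, as anticipated in the introduction.
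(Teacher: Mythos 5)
Your proposal is correct, and its skeleton is the paper's: reduce to computing the space $W$ of derivatives of null functions, split $W=W_0+W_\mu$ according to $\mathcal{A}_0$ and $\mathcal{A}_\mu$, and identify $W_0=d^\Lambda\Omega^3(M)$, $W_\mu=d\Omega^1(M)$ (the paper's Lemmas~\ref{lem:whatw} and~\ref{lem:exact}). The two key steps are executed differently, though. For the hard inclusions $W_0\subseteq d^\Lambda\Omega^3$, $W_\mu\subseteq d\Omega^1$, the paper works concretely through cohomology: Lemma~\ref{lem:YY} gives $d^\Lambda$-closedness, so $\beta_{G,\pi}$ defines a class in Brylinski's homology $H_2(M,d^\Lambda)\cong H^{2m-2}_{dR}(M)$; unobstructedness lets the constraints sweep out all of $H^2_\pi\cong H^2_{dR}(M)$; and the pairing becomes the cup product, nondegenerate by Poincar\'e duality, forcing exactness. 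Your annihilator-of-kernel/closed-range formulation is the same mathematics in operator-theoretic dress, and you correctly assemble its ingredients (unobstructedness, $\phi_\mu=0$ on $\mathcal{S}_\mu(M)$, the volume conjugation of $\n$ to the Lichnerowicz differential). One clause is too quick: that the Lichnerowicz differential ``computes de Rham cohomology and so has closed range''---a quasi-isomorphism alone does not transfer closed-range; what you need, and what holds here, is that in the symplectic case $[\pi,\cdot]$ is conjugate to $d$ via the bundle isomorphism induced by $\pi^\sharp$ (an honest isomorphism of complexes), after which Hodge theory for $d$ applies; your own conjugation-via-$\ast$ remark shows you have this in hand, so it is a presentational lacuna rather than a gap. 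For the easy inclusions your explicit functions are in fact more careful than the paper's: the quadratic function $G_\sigma(X)=\tfrac12(\sigma,[X,X])_\mu$ genuinely lies in $\mathcal{A}_0$ and has derivative $\n\sigma=-d^\Lambda\sigma$ at $\pi$, whereas the paper's appeal to ``constructing a linear function'' is imprecise for Lemma~\ref{lem:whatw}---the linear function with derivative $d^\Lambda\sigma$ equals $X\mapsto(\sigma,[\pi,X])_\mu$ and does not vanish on all of $\mathcal{P}(M)$, so a quadratic function such as yours is the right repair (for $\mathcal{A}_\mu$ the linear function $X\mapsto(d\alpha,X)_\mu$ does vanish on $D(\mu)$, matching your $G_\alpha$ after integration by parts). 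In short: same route at the level of strategy, a cleaner realization step, and an equivalent but more functional-analytic phrasing of the exactness step whose one quick inference should be expanded as above.
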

We prove in three steps. Firstly, by considering linear functions of $\pi$, it is clear that any 2-form may arise as the derivative of an admissible function. It remains to characterize the derivatives of functions in $\mathcal{A}_0$ and $\mathcal{A}_\mu$. 
\begin{lem} \label{lem:whatw}
Evaluated at a symplectic structure, the space of derivatives $\beta_{G, \pi}$ of functions in $\mathcal{A}_0$ can be identified with the space $ d^\Lambda \Omega^3(M) $. 
\end{lem}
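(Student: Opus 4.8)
The plan is to prove the identification by establishing two inclusions. Write $D_0 = \{\beta_{G,\pi} \mid G \in \mathcal{A}_0\} \subseteq \Omega^2(M)$ for the space of derivatives evaluated at the fixed symplectic $\pi$. I would show $d^\Lambda \Omega^3(M) \subseteq D_0$ by an explicit construction, and $D_0 \subseteq d^\Lambda \Omega^3(M)$ by realizing $D_0$ inside the annihilator of the tangent space to $\mathcal{P}(M)$, the hard direction being a closed-range statement.

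For the first inclusion, I would fix $\theta \in \Omega^3(M)$ and set $G_\theta(\pi) = \tfrac12 (\theta, [\pi,\pi])_\mu = \tfrac12 \int_M (\iota_{[\pi,\pi]} \theta) \mu$. Since $\iota_{[\pi,\pi]}\theta$ depends on the $1$-jet of $\pi$ we have $G_\theta \in \mathcal{A}$, and $G_\theta \in \mathcal{A}_0$ because $[\pi,\pi]=0$ on $\mathcal{P}(M)$. Differentiating along a variation $\dot\pi$ and using the graded symmetry $[\pi,\dot\pi]=[\dot\pi,\pi]$ of the Schouten bracket for the even-degree $2$-vector $\pi$ gives $\tfrac{d}{dt}[\pi,\pi] = 2[\pi,\dot\pi]$, whence $\left.\tfrac{d}{dt}G_\theta\right|_0 = (\theta, [\pi,\dot\pi])_\mu$. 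The defining adjointness $(\theta, [\pi,\dot\pi])_\mu = (\n\theta, \dot\pi)_\mu$ then yields $\beta_{G_\theta,\pi} = \n\theta = -d^\Lambda\theta$. As $\theta$ ranges over $\Omega^3(M)$ this exhausts $d^\Lambda\Omega^3(M)$, so $d^\Lambda\Omega^3(M) \subseteq D_0$.

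For the reverse inclusion I would use that any $G \in \mathcal{A}_0$ vanishes identically on $\mathcal{P}(M)$, so that for every curve $\pi_t \subseteq \mathcal{P}(M)$ with $\pi_0 = \pi$ one has $(\beta_{G,\pi}, \dot\pi_0)_\mu = 0$. Because $\pi$ is symplectic the deformation problem is unobstructed (Ref.~\cite{da1999geometric}, Section 18.6), so $\dot\pi_0$ sweeps out the full Zariski tangent space $\ker([\pi,\cdot]\colon A^2(M) \to A^3(M))$; hence $\beta_{G,\pi}$ lies in the annihilator of this kernel under $(\cdot,\cdot)_\mu$. Conversely, adjointness gives $(\n\theta, X)_\mu = (\theta, [\pi,X])_\mu = 0$ whenever $[\pi,X]=0$, so $d^\Lambda\Omega^3(M) = \mathrm{im}(\n)$ is contained in the same annihilator; the remaining task is to show the annihilator is no larger, i.e. $(\ker[\pi,\cdot])^\perp = \mathrm{im}(\n) = d^\Lambda\Omega^3(M)$.

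The main obstacle is exactly this last equality, which is an infinite-dimensional closed-range assertion rather than a formal consequence of adjointness: in general $(\ker[\pi,\cdot])^\perp$ equals only the \emph{closure} of $\mathrm{im}(\n)$. I would dispose of it by noting that on a symplectic manifold the Lichnerowicz complex $(A^\bullet(M), [\pi,\cdot])$ is isomorphic, via the musical isomorphism $A^k \cong \Omega^k$ induced by $\omega$, to the de Rham complex, hence is elliptic with closed range; equivalently, $\n = \pm d^\Lambda$ generates the symplectic complex of Tseng and Yau~\cite{tseng2012cohomology}, whose cohomology is finite-dimensional. Closed range of $[\pi,\cdot]\colon A^2(M)\to A^3(M)$ then forces $(\ker[\pi,\cdot])^\perp = \mathrm{im}(\n)$, giving $D_0 \subseteq d^\Lambda\Omega^3(M)$ and completing the identification $D_0 = d^\Lambda\Omega^3(M)$.
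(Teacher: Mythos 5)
Your proof is correct, and on the hard direction it is essentially the same mathematics as the paper's, packaged differently; on the easy direction it is actually more careful than the paper. For surjectivity the paper says only that ``constructing a linear function'' realizes any $d^\Lambda$-exact 2-form, which as literally stated is loose: a linear function $\pi \mapsto (\beta,\pi)_\mu$ with a fixed 2-form $\beta$ has no reason to vanish on all of $\mathcal{P}(M)$, so it need not lie in $\mathcal{A}_0$; your quadratic $G_\theta(\pi)=\tfrac12(\theta,[\pi,\pi])_\mu$, which is manifestly in $\mathcal{A}_0$ and has $\beta_{G_\theta,\pi}=\delta_{\pi,\mu}\theta=-d^\Lambda\theta$ by adjointness, is the construction that actually does the job. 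For the reverse inclusion the paper reaches the same annihilator condition from unobstructedness (its Eq.~\eqref{eq:pdd}), but then sidesteps your closed-range issue by descending to finite-dimensional cohomology: by Lemma~\ref{lem:YY} the derivative $\beta_{G,\pi}$ is a $d^\Lambda$-cycle, hence defines a class in $H_2(M,d^\Lambda)\cong H^{2m-2}_{dR}(M)$ via Brylinski's symplectic star, while each $X$ with $[\pi,X]=0$ defines a class in $H^2_\pi(M)\cong H^2_{dR}(M)$; the pairing descends to the cup-product pairing $H^2\times H^{2m-2}\to\mathbb{R}$, nondegenerate by Poincar\'e duality, so $[\beta_{G,\pi}]=0$ and $\beta_{G,\pi}$ is $d^\Lambda$-exact, with no Fr\'echet-space functional analysis invoked. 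Your formulation $(\ker[\pi,\cdot])^\perp=\mathrm{im}\,\delta_{\pi,\mu}$ on smooth sections is equivalent, and you correctly identify both the trap (a priori one gets only the \emph{closure} of the image) and the cure (ellipticity of the complex, which over a symplectic manifold is the de Rham complex in disguise); to make that step fully rigorous, though, you should appeal to the Hodge decomposition of the elliptic complex (or to Brylinski's isomorphism, as the paper does) rather than to the Banach-space closed range theorem, whose analogue for spaces of smooth sections is delicate. Two minor corrections: the single-differential $d^\Lambda$ complex and its homology are due to Koszul and Brylinski~\cite{brylinski1988differential} (Tseng--Yau's groups are the $d+d^\Lambda$ and $dd^\Lambda$ ones), and your sign identity $\delta_{\pi,\mu}=-d^\Lambda$ on 3-forms tacitly uses $\phi_\mu=0$, which is legitimate here because the lemma is applied with $\pi\in\mathcal{S}_\mu(M)$.
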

\begin{proof}
From Lemma~\ref{lem:YY} we know that $\beta_{G, \pi}$ is $d^\Lambda$ closed, hence defines a homology class in $H_2(M, d^\Lambda)$. In the symplectic case this group is isomorphic to the de Rahm cohomology group $H^{2m-2}(M)$ with explicit map given by the `symplectic star' operator (see e.g. ~\cite{brylinski1988differential} Cor. 2.2.2). Moreover, since any $X$ satisfying $[X,\pi]=0$ can arise as a deformation of the symplectic structure $\pi$, we have
\begin{equation}\label {eq:pdd}
0 = \int_M \iota_X \beta_{G, \pi} \mu
\end{equation}
for all $X$ satisfying $[X, \pi]= 0 $. Any 2-vector $X$ satisfying $[X, \pi]=0$ defines an element of the second Poisson cohomology group, which in the symplectic case is isomorphic to the de Rham cohomology group $H^2(M)$ with explicit isomorphism given by $(\pi^\sharp)^{-1}:TM \to T^\ast M$ extended to all of $\Lambda^\bullet TM$. Under the mapping to de Rham cohomology, the pairing \eqref{eq:pdd} is just the cup product pairing between the de Rham cohomology groups $H^2$ and $H^{2m-2}$ on a closed $2m$ manifold. By Poincar\'{e} duality this pairing is non-degenerate. Therefore $\beta_{G, \pi}$ must be null-homologous in the homology group $H_2(M, d^\Lambda)$, hence it is $d^\Lambda$ exact. By constructing a linear function, any $d^\Lambda$ exact 2-form may arise as a derivative.

\end{proof}
Recall $D(\mu) \subset A^2(M)$ as the set of non-degenerate 2-vector fields satisfying $d \iota_X \mu = 0$ for $X \in D(\mu)$.
\begin{lem} \label{lem:exact} Evaluated at an element of $D(\mu)$, the space of derivatives $\beta_{G, \pi}$ of functions in $\mathcal{A}_\mu $ can be identified with the space $d \Omega^1(M)$. 
\end{lem}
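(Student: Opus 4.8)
The plan is to establish both inclusions: that every derivative $\beta_{G,\pi}$ of a function $G \in \mathcal{A}_\mu$ is exact, and conversely that every exact $2$-form arises as such a derivative. The starting observation is that $D(\mu)$ is cut out of $A^2(M)$ by the \emph{linear} condition $d\iota_X\mu = 0$, intersected with the open non-degeneracy locus; it is therefore an open subset of the linear subspace $\{X \in A^2(M) \mid d\iota_X\mu = 0\}$. Consequently, at any $\pi \in D(\mu)$ the admissible tangent directions are exactly the $Y \in A^2(M)$ with $d\iota_Y\mu = 0$, each realized by the straight line $\pi_t = \pi + tY$, which satisfies $d\iota_{\pi_t}\mu = 0$ and stays non-degenerate for small $t$. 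Since $G$ vanishes identically on $D(\mu)$, differentiating $G(\pi_t)=0$ and using \eqref{eq:funcd} gives, in direct analogy with Lemma~\ref{lem:YY},
\[
(\beta_{G,\pi}, Y)_\mu = 0 \quad \text{for all } Y \text{ with } d\iota_Y\mu = 0.
\]

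The next step is to turn this orthogonality into a cohomological statement. Using the identity $(\iota_X\beta)\mu = \beta\wedge\iota_X\mu$ for a $2$-form $\beta$ and $2$-vector $X$, the pairing becomes $(\beta_{G,\pi}, Y)_\mu = \int_M \beta_{G,\pi}\wedge\iota_Y\mu$. Since the map $\ast\colon Y\mapsto\iota_Y\mu$ is the isomorphism $A^2(M)\to\Omega^{n-2}(M)$ and carries the tangent space of $D(\mu)$ onto the closed $(n-2)$-forms, the condition reads $\int_M \beta_{G,\pi}\wedge\sigma = 0$ for all closed $\sigma\in\Omega^{n-2}(M)$. I would then identify this annihilator with $d\Omega^1(M)$ in two moves: testing against \emph{exact} $\sigma = d\eta$ and integrating by parts gives $\int_M d\beta_{G,\pi}\wedge\eta = 0$ for all $\eta$, so by non-degeneracy of the wedge pairing $\beta_{G,\pi}$ is closed; the remaining pairing then descends to the cup-product pairing $H^2(M)\times H^{n-2}(M)\to\mathbb{R}$, which is non-degenerate by Poincar\'e duality, forcing $[\beta_{G,\pi}]=0$ and hence $\beta_{G,\pi}\in d\Omega^1(M)$.

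For the converse I would produce, for any exact $2$-form $d\alpha$, the linear function $F = \int_M(\iota_\pi d\alpha)\mu$, whose derivative is represented by $d\alpha$. The same integration by parts shows $F(X) = \int_M \alpha\wedge d\iota_X\mu$, which vanishes whenever $d\iota_X\mu=0$; thus $F\in\mathcal{A}_\mu$ and every element of $d\Omega^1(M)$ is realized as a derivative, completing the identification. I expect the only substantive step to be the annihilator computation of the middle paragraph: the passage from ``$\beta_{G,\pi}$ closed'' to ``$\beta_{G,\pi}$ exact'' is precisely where Poincar\'e duality enters. It is worth noting, as with Lemma~\ref{lem:whatw}, that neither integrability nor non-degeneracy of $\pi$ is actually used here, so the argument depends only on the volume form $\mu$ and the topology of $M$.
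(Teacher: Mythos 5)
Your proof is correct and takes essentially the same route as the paper: you derive the orthogonality $(\beta_{G,\pi},Y)_\mu=0$ for all $Y$ with $d\iota_Y\mu=0$, rewrite it as $\int_M \beta_{G,\pi}\wedge\sigma=0$ for all closed $(n-2)$-forms $\sigma$, and conclude exactness via Poincar\'e duality (your two-step annihilator computation --- first testing against exact $\sigma$ to get closedness, then against all closed $\sigma$ to kill the class --- just makes explicit what the paper compresses into ``analogous reasoning to Lemma~\ref{lem:whatw}''), with the converse realized by the same linear function $\int_M(\iota_\pi d\alpha)\mu$. One caveat on your closing aside: while Lemma~\ref{lem:exact} indeed depends only on $\mu$, Lemma~\ref{lem:whatw} genuinely uses non-degeneracy of $\pi$, since its identifications $H_2(M,d^\Lambda)\cong H^{2m-2}(M)$ (symplectic star) and of Poisson cohomology with $H^2(M)$ (via $\pi^\sharp$) both require $\pi$ to be symplectic.
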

\begin{proof}
The condition for a deformation $Y$ to induce zero infinitesimal change in $\mu$ is $d \iota_Y\mu = 0$. For any function in $\mathcal{A}_\mu$, the derivative $\beta_{G, \pi}$ must satisfy
$$ 0 = \int_M \iota_Y \beta G \mu$$
for all $Y$ satisfying $d \iota_Y\mu = 0$. Equivalently, $\iota_Y\mu $ is an arbitrary closed $n-2$ form. By analogous reasoning to Lemma~\ref{lem:whatw}, $\beta_{G, \pi}$ must be $d$-exact. By constructing a linear function, any exact 2-form may arise as a derivative.
\end{proof}
This completes the proof of Proposition~\ref{prop:admiss}. We now define the Poisson bracket on $S_\mu(M)$ entirely analogously to Theorem~\ref{thm:jac}.
\begin{thm}
The bracket on admissible functions $\mathcal{S}_\mu(M) \to \mathbb{R}$ given by
\begin{equation}
\{F, G\}_\mu = \left ( d^\Lambda \beta_{F, \pi} \wedge d^\Lambda \beta_{G, \pi} , \pi \right  )_\mu,
\end{equation}
for two admissible functions $F$ and $G$, is Poisson.
\end{thm}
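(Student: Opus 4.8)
The plan is to obtain this bracket as the restriction of the bracket \eqref{eq:jac} of Theorem~\ref{thm:jac} to the Poisson subset $\mathcal{S}_\mu(M)$. By Theorem~\ref{thm:subset}, the intersection of the regular rank-$2m$ structures with the $\mu$-unimodular structures is a Poisson subset of $\{\cdot,\cdot\}_\mu$, and on this subset $\phi_\mu = 0$, so that $\n = (-1)^k d^\Lambda$ and in particular $\n \beta_{F,\pi} = d^\Lambda \beta_{F,\pi}$ on the $2$-forms appearing in the bracket. Thus the displayed expression is literally \eqref{eq:jac} evaluated at $\pi \in \mathcal{S}_\mu(M)$, and $\mathbb{R}$-bilinearity, the Leibniz rule, and antisymmetry are inherited verbatim from the construction on $\mathcal{P}(M)$. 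The one genuinely new feature is that admissible functions on $\mathcal{S}_\mu(M)$ lie in the larger quotient $\mathcal{A}/(\mathcal{A}_0 + \mathcal{A}_\mu)$, so the first task is to check that the bracket descends to this quotient; the Jacobi identity then follows as in Theorem~\ref{thm:jac}.

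For well-definedness I would show the value depends only on the classes $[\beta_{F,\pi}]$ and $[\beta_{G,\pi}]$ in $T^\ast_\pi \mathcal{S}_\mu(M) \cong \Omega^2(M)/(d\Omega^1(M) + d^\Lambda \Omega^3(M))$ of Proposition~\ref{prop:admiss}. By Lemma~\ref{lem:whatw} the $\mathcal{A}_0$ ambiguity changes $\beta_{G,\pi}$ by an element of $d^\Lambda \Omega^3(M)$, which leaves $d^\Lambda \beta_{G,\pi}$ unchanged since $(d^\Lambda)^2 = 0$ (this is the symplectic form of Lemma~\ref{lem:YY}). By Lemma~\ref{lem:exact} the $\mathcal{A}_\mu$ ambiguity changes $\beta_{G,\pi}$ by some $d\lambda$; using the anticommutation $d d^\Lambda = -d^\Lambda d$ we have $d^\Lambda d\lambda = -d\sigma$ with $\sigma = d^\Lambda \lambda \in C^\infty(M)$, so the bracket changes by a term of the form $(d^\Lambda \beta_{F,\pi} \wedge d\sigma, \pi)_\mu$. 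Rewriting this via \eqref{eq:ways} gives $\int_M (\iota_{V_{F,\pi}} d\sigma)\,\mu = \int_M (\mathcal{L}_{V_{F,\pi}} \sigma)\,\mu$, which vanishes because $V_{F,\pi}$ is $\mu$-preserving (Proposition~\ref{prop:prop}) and $M$ is closed, by Stokes. Hence the bracket is well-defined on $\mathcal{A}/(\mathcal{A}_0 + \mathcal{A}_\mu)$.

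For the Jacobi identity I would rerun the computation proving Theorem~\ref{thm:jac} unchanged: the derivative of $\{F,G\}_\mu$ is still given by Lemma~\ref{lem:fd}, the $\gamma$-terms of \eqref{eq:funcWHAT} cancel in the cyclic sum by their symmetry, the Lie-derivative terms recombine through $\mathcal{L}_X \mathcal{L}_Y - \mathcal{L}_Y \mathcal{L}_X = \mathcal{L}_{[X,Y]}$ to match the contribution of the first line of \eqref{eq:funcWHAT}, and Lemma~\ref{lem:cyc} collapses the remainder to a multiple of $(d^\Lambda \beta_{F,\pi} \wedge d^\Lambda \beta_{G,\pi} \wedge d^\Lambda \beta_{H,\pi}, [\pi,\pi])_\mu$, which vanishes by integrability of $\pi$. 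The point needing care, and the one I expect to be the main obstacle, is that this calculation now takes place in the larger quotient: I must confirm that the components of $\beta_{\{F,G\}_\mu,\pi}$ lying in $d\Omega^1(M) + d^\Lambda \Omega^3(M)$ make no contribution, which is precisely the well-definedness established above, and that no step of the original Jacobi argument secretly relied on $\phi_\mu \neq 0$. Since the manipulations use only that $V_{F,\pi}$ is volume-preserving, that $[\pi,\pi]=0$, and Lemma~\ref{lem:cyc} — all of which persist, with $\phi_\mu = 0$ merely simplifying $\n$ to $d^\Lambda$ — the identity carries over and the bracket is Poisson.
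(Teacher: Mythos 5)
Your proposal is correct and follows the paper's proof in essentially the same way: the only genuinely new content is well-definedness on the quotient $\mathcal{A}/(\mathcal{A}_0+\mathcal{A}_\mu)$, with the $d^\Lambda\Omega^3(M)$ ambiguity killed by $(d^\Lambda)^2=0$ and the $d\Omega^1(M)$ ambiguity reduced, via anticommutation of $d$ and $d^\Lambda$, to pairing an exact form $d\sigma$ against the flow, after which the Jacobi identity carries over verbatim from Theorem~\ref{thm:jac}. The paper disposes of the final pairing by recognizing $\pi^\sharp(d(\iota_\pi d\alpha))$ as a Hamiltonian vector field and using $(\beta_{G,\pi},[\pi,H_{\iota_\pi d\alpha}])_\mu=0$, while you use volume-preservation of $V_{F,\pi}$ and Stokes' theorem; these are equivalent one-line manipulations of the same term.
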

\begin{proof}
We only need to show that the value of the bracket does not depend on the choice of representative forms of the derivative, as the remainder of the proof is completely analogous to Theorem~\ref{thm:jac}. Following Lemma~\ref{lem:exact} we may add an arbitrary exact 2-form to each derivative. Suppose we add $d \alpha$ to $\beta_{F, \pi}$, then $d^\Lambda d \alpha =  d d^\Lambda \alpha = -d (\iota_\pi d \alpha)$, and we can compute
$$(d d^\Lambda \alpha \wedge d^\Lambda \beta_{G, \pi}, \pi)_\mu = (d^\Lambda \beta_{G, \pi}, H_{\iota_\pi d \alpha})_\mu,$$
where $H_{\iota_\pi d \alpha}$ is the Hamiltonian vector field of the function $\iota_\pi d \alpha$. Then we find 
$$ (d^\Lambda \beta_{G, \pi}, H_{\iota_\pi d \alpha})_\mu =  ( \beta_{G, \pi}, [\pi , H_{\iota_\pi d \alpha}])_\mu = 0.$$
\end{proof}

\section{Steady points of the flow and symplectic cohomology}

The flow equation of the bracket on $S_\mu(M)$ is given once again by
$$\partial_t \pi = [V_{F,\pi}, \pi],$$
but now $V_{F,\pi}$ is not uniquely specified by the function $F$, $V_{F,\pi}$ is defined only up to Hamiltonian vector fields associated to functions of the form $\iota_\pi d \alpha$, for arbitrary 1-forms $\alpha$, which does not affect the flow. This becomes clearer from the alternate form of the flow equation (see Proposition~\ref{prop:prop}).
$$ \partial_t \pi = \pi(d d^\Lambda \beta_{F, \pi}) \pi - \frac{1}{2} \pi \wedge \pi (d d^\Lambda \beta_{F, \pi}),$$
as $ \beta_{F, \pi}$ is defined only up to the exact 1-form $d \alpha$ and $d d^\Lambda \alpha =  d^\Lambda d \alpha$, the 1-form $d d^\Lambda \beta_{F, \pi}$ does not depend on $\alpha$. In this case the flow equation is more naturally given in terms of the symplectic form $\omega$.
\begin{prop} \label{prop:symflow}
On $\mathcal{S}(M)_\mu$, the flow equation of the bracket $\{ \cdot, \cdot \}_\mu$ is
\begin{equation} \label{eq:symflow}
 \partial_t \omega = d d^\Lambda \beta_{F, \pi}.
 \end{equation}
 where $\omega$ is the symplectic form corresponding to $\pi$.
\end{prop}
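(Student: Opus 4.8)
The plan is to convert the flow equation for the bivector, displayed just above as
$$\partial_t \pi = (\iota_\pi \sigma)\,\pi - \tfrac{1}{2}\,\pi\wedge\pi(\sigma,\cdot), \qquad \sigma := d d^\Lambda \beta_{F, \pi},$$
into an equation for the symplectic form $\omega$, which is tied to $\pi$ through $\pi^\sharp \circ \omega^\sharp = -\mathrm{Id}$. First I would differentiate this defining relation in $t$. Since $\omega^\sharp$ is invertible with $(\omega^\sharp)^{-1} = -\pi^\sharp$, the identity $\partial_t(\pi^\sharp \circ \omega^\sharp) = 0$ rearranges to
$$\dot\pi^\sharp = \pi^\sharp \circ \dot\omega^\sharp \circ \pi^\sharp,$$
so that $\dot\omega$ is uniquely determined by $\dot\pi$. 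The proposition is therefore equivalent to the statement that the right-hand side of the flow equation, read as an antisymmetric map $T^\ast M \to TM$, equals $\pi^\sharp \circ \sigma^\sharp \circ \pi^\sharp$, where $\sigma^\sharp\colon TM \to T^\ast M$ sends $X \mapsto \iota_X \sigma$.

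This reduces everything to a pointwise, purely algebraic identity: for a nondegenerate bivector $\pi$ and an arbitrary $2$-form $\sigma$,
$$\Big[(\iota_\pi \sigma)\,\pi - \tfrac{1}{2}\,\pi\wedge\pi(\sigma,\cdot)\Big]^\sharp = \pi^\sharp \circ \sigma^\sharp \circ \pi^\sharp.$$
Both sides are tensorial and natural in $(\pi,\sigma)$, so I would check the identity in Darboux coordinates, where $\pi$ is the constant standard Poisson bivector; there it collapses to a finite linear-algebra verification. In dimension two the $\pi\wedge\pi$ term vanishes and the leading term $(\iota_\pi\sigma)\pi$ already gives the result, while in higher dimension the factor $\tfrac{1}{2}\,\pi\wedge\pi(\sigma,\cdot)$ supplies exactly the remaining contributions. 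Granting the identity, the flow equation yields $\dot\pi^\sharp = \pi^\sharp \circ \sigma^\sharp \circ \pi^\sharp$, and comparison with the previous display forces $\dot\omega^\sharp = \sigma^\sharp$, i.e.
$$\partial_t \omega = d d^\Lambda \beta_{F, \pi},$$
as claimed. This is automatically closed, consistent with $\omega$ staying symplectic, and exact, consistent with the flow acting by exact gauge transformations.

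A cleaner route sidesteps the coordinate computation by invoking Proposition~\ref{prop:listo}: the flow acts by an exact gauge transformation $\gamma_t$ with $\dot\gamma_t = d\, d^\Lambda \beta_{F,\pi}$. For a symplectic Poisson structure a gauge ($B$-field) transformation by a closed $2$-form $B$ sends $\omega \mapsto \omega + B$ whenever $\omega + B$ remains nondegenerate (see Ref.~\cite{meinrenken2018poisson}), so differentiating at $t=0$ gives $\partial_t \omega = \dot\gamma_t = d d^\Lambda \beta_{F,\pi}$ at once. The main obstacle in the first, self-contained route is the tensorial identity above — fixing the combinatorial coefficient of the $\pi\wedge\pi$ term and the signs so that it matches $\pi^\sharp \circ \sigma^\sharp \circ \pi^\sharp$ on the nose; the gauge-transformation route avoids this but leans on the external fact that $B$-field transforms act additively on the symplectic form.
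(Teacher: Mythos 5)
Your proposal is correct in substance, but both of your routes differ from the paper's, which is a two-line Lie-derivative argument: since the flow acts by the diffeomorphisms generated by $V_{F,\pi}$ and $\omega$ is a natural (diffeomorphism-equivariant) function of $\pi$, one has $\partial_t \omega = \mathcal{L}_{V_{F,\pi}} \omega = d\, \iota_{V_{F,\pi}} \omega$ by Cartan's formula and closedness of $\omega$, and then $\iota_{V_{F,\pi}} \omega = d^\Lambda \beta_{F, \pi}$ follows immediately from $V_{F,\pi} = \pi^\sharp(d^\Lambda \beta_{F, \pi})$ and $\pi^\sharp \circ \omega^\sharp = -\mathrm{Id}$; this bypasses entirely the $\pi \wedge \pi$ term of Proposition~\ref{prop:prop} that your first route must match. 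That first route is structurally sound (differentiating $\pi^\sharp \circ \omega^\sharp = -\mathrm{Id}$ to get $\dot\pi^\sharp = \pi^\sharp \circ \dot\omega^\sharp \circ \pi^\sharp$ is right, and the naturality/Darboux reduction is legitimate), but be warned that your tensor identity does not hold ``on the nose'' with the conventions you state: taking $\pi = \partial_1\wedge\partial_2 + \partial_3\wedge\partial_4$ and $\sigma = dx^1\wedge dx^2$, the left-hand side evaluates to $\partial_1\wedge\partial_2$ while $\pi^\sharp \circ \sigma^\sharp \circ \pi^\sharp$ (with $\sigma^\sharp\colon X \mapsto \iota_X\sigma$ and $\pi^\sharp\colon \alpha \mapsto \pi(\alpha,\cdot)$) is the map of $-\partial_1\wedge\partial_2$, so an overall sign appears that must be absorbed into the convention for $\omega^\sharp$ --- consistent with the paper's own usage $\iota_\pi \omega = -m$ in Example~\ref{exam:volume}, which indicates $\omega^\sharp(X) = \omega(\cdot, X)$ rather than $\iota_X\omega$. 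Since you explicitly flagged this sign-fixing as the main obstacle, it is a declared caveat rather than a hidden gap, but as written your ``collapses to linear algebra'' step would terminate with the wrong sign. Your second route (Proposition~\ref{prop:listo} plus the additive action $\omega \mapsto \omega + B$ of gauge transformations on symplectic structures) is valid and closest in spirit to the paper's gauge-theoretic framing, though the same bookkeeping enters there too, since $\omega \mapsto \omega + B$ is the Dirac-geometric statement adapted to $\pi = \omega^{-1}$, not to $\pi^\sharp \circ \omega^\sharp = -\mathrm{Id}$. What the paper's proof buys is brevity and self-containedness; what yours buy is, respectively, an explicit verification that the bivector and $2$-form versions of the flow equation agree, and a conceptual derivation directly from the gauge-transformation picture.
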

\begin{proof}
Since the flow equation acts by diffeomorphisms preserving the symplectic volume, we have
$$\partial_t \omega = \mathcal{L}_{V_{F,\pi}} \omega = d \iota_{V_{F,\pi}} \omega.$$
Now 
$$ \iota_{V_{F,\pi}} \omega = \omega(\pi(- d^\Lambda \beta_{F, \pi}, \cdot ), \cdot ) = d^\Lambda \beta_{F, \pi},$$
which gives the result.
\end{proof}
We now show how the family of symplectic cohomology groups, the $d+d^\Lambda$ and $d d^\Lambda$ groups defined by Tseng and Yau~\cite{tseng2012cohomology} arise naturally when considering symplectic structures and their deformations using the bracket $\{\cdot, \cdot\}_\mu$. Recall that these cohomology groups are defined as 
$$ H^k_{d + d^\Lambda}(M) = \frac{{\rm ker}\, d_k \cap {\rm ker}\, d^\Lambda_k}{{\rm Im}\, dd^\Lambda_k}, \quad H^k_{d d^\Lambda}(M) = \frac{{\rm ker}\, dd^\Lambda_{k}}{{\rm Im}\, d_{k-1} + {\rm Im}\, d^\Lambda_{k+1}}.$$
These cohomology groups satisfy a number of useful properties, in particular there is a Hodge-type decomposition of $k$ forms (Ref.~\cite{tseng2012cohomology} Theorems 3.5, 3.16) for both $dd^\Lambda$ and $d+d^\Lambda$ cohomologies.
\begin{lem}
A 2-form $\eta$ is the derivative at $t=0$ of a 1-parameter family of symplectic forms $\omega_t \in \mathcal{S}_{\mu}(M)$ if and only if
$$ d\eta = d^\Lambda \eta = 0.$$
\end{lem}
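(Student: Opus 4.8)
The plan is to reduce both conditions to the statements that $\eta$ is closed and that its symplectic trace $\Lambda \eta = \iota_\pi \eta$ is (pointwise) constant. For the ``only if'' direction, suppose $\omega_t \in \mathcal{S}_\mu(M)$ is a smooth family with $\partial_t \omega_t|_{t=0} = \eta$. Each $\omega_t$ is closed, so differentiating $d \omega_t = 0$ gives $d\eta = 0$ at once. For the second condition I would differentiate the volume constraint: by the lemma above, $\mu$-unimodularity means $\omega_t^m / m! = c_t \mu$ for constants $c_t$, so differentiating at $t=0$ gives $\omega^{m-1} \wedge \eta / (m-1)! = \dot c_0\, \mu$. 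Invoking the primitive (Lefschetz) decomposition of the $2$-form $\eta$ into a primitive part $\eta_{\mathrm{prim}}$, satisfying $\omega^{m-1} \wedge \eta_{\mathrm{prim}} = 0$, and a multiple of $\omega$ proportional to $\Lambda \eta$, the left-hand side collapses to a constant multiple of $(\Lambda \eta)\, \omega^m / m! = (\Lambda\eta)\, c_0 \mu$. This forces $\Lambda \eta$ to be constant, and since $d\eta = 0$ we have $d^\Lambda \eta = d(\Lambda \eta) - \Lambda(d\eta) = d(\Lambda \eta)$, which therefore vanishes.

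For the ``if'' direction I would run this computation in reverse and then invoke the unobstructedness of symplectic deformations recalled just before the lemma. Given $d\eta = d^\Lambda \eta = 0$, I transport $\eta$ to an infinitesimal deformation $X \in A^2(M)$ of the bivector $\pi$ through the isomorphism $(\pi^\sharp)^{-1}$ identifying the second Poisson cohomology with $H^2(M)$; under this identification $[\pi, X] = 0$ holds precisely because $\eta$ is closed. The remaining tangency condition $d(\iota_X \mu) = 0$ is the infinitesimal form of $\mu$-unimodularity, which by the lemma and the same Lefschetz identity is equivalent to the first-order change of symplectic volume being a constant multiple of $\mu$, hence to $\Lambda \eta$ being constant, i.e.\ to $d^\Lambda \eta = 0$. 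Thus $X$ lies in $T_\pi \mathcal{S}_\mu(M) = \{ X \in A^2(M) \mid [\pi,X]=0,\ d\iota_X \mu = 0\}$, and the unobstructedness statement (Ref.~\cite{da1999geometric}, Section 18.6) guarantees that $X$ integrates to a genuine family $\pi_t \in \mathcal{S}_\mu(M)$; the corresponding symplectic forms $\omega_t$ then satisfy $\partial_t \omega_t|_{t=0} = \eta$.

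I expect the main obstacle to be the existence half of the ``if'' direction: producing an actual smooth family lying entirely in $\mathcal{S}_\mu(M)$, rather than merely matching the constraints infinitesimally. The naive linear path $\omega + t\eta$ is closed and, for small $t$, non-degenerate with the correct derivative, but $(\omega + t\eta)^m$ need not remain pointwise proportional to $\mu$ beyond first order, so this path escapes $\mathcal{S}_\mu(M)$; this is exactly why I lean on the unobstructedness result instead of an explicit construction. The one delicate bookkeeping point is matching the two unimodularity conditions, $d(\iota_X \mu) = 0$ on the bivector side and $d^\Lambda \eta = 0$ on the form side, where the sign and normalization conventions (notably $\n = (-1)^k d^\Lambda$ and $\iota_\pi \omega = -m$) must be tracked to make the Lefschetz identity relating $\omega^{m-1}\wedge \eta$ to $(\Lambda \eta)\, \omega^m$ come out with the correct constant.
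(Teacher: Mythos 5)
Your proof is correct and takes essentially the same route as the paper: both directions come down to the observation that, given $d\eta = 0$, first-order membership in $\mathcal{S}_\mu(M)$ is equivalent to $\Lambda\eta$ being constant, hence to $d^\Lambda \eta = d\Lambda\eta - \Lambda d\eta = 0$, with unobstructedness of symplectic deformations supplying existence; your primitive-decomposition justification of $\eta \wedge \omega^{m-1} \propto (\Lambda\eta)\,\mu$ and the bivector-side translation $d\iota_X\mu = 0 \Leftrightarrow d^\Lambda\eta = 0$ are simply more explicit versions of steps the paper asserts without proof. The Moser-type correction you flag as the main obstacle --- needed to keep the family inside $\mathcal{S}_\mu(M)$ beyond first order, since neither $\omega + t\eta$ nor the cited unobstructedness result respects the volume normalization --- is left equally implicit in the paper's own proof, so your attempt matches it in both substance and level of rigour.
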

\begin{proof}
The requirement $d\eta = 0$ is immediate, and since the deformation theory of symplectic forms is unobstructed, any such closed $\eta$ is the derivative of a 1-parameter family of symplectic forms. The condition that the volume form change by a simple scale factor is
$$ \eta \wedge \omega^{n-1} \propto (\iota_\pi \eta) \mu = {\rm const} \mu. \Rightarrow \iota_\pi \eta = \Lambda \eta = {\rm const}.$$
Now suppose that $d^\Lambda \eta = 0$. This is written explicitly as
$$d \Lambda \eta - \Lambda d \eta = 0$$
since $\eta$ must be closed, $d^\Lambda = 0$ implies $d\Lambda \eta=0$, which implies $\Lambda \eta$ is constant.
\end{proof}

\begin{thm} \label{thm:sym}
For a symplectic structure $\omega$ with symplectic volume form $\mu$, the group $H^2_{d + d^\Lambda}(M)$ characterizes infinitesimal deformations of $\omega$ in $\mathcal{S}_\mu(M)$, modulo those arising from the Hamiltonian flow of the bracket $\{\cdot, \cdot \}_\mu$ on $\mathcal{S}_\mu(M)$. The group $H^2_{d d^\Lambda}(M)$ characterizes distinct element of the space $T^\ast_\pi \mathcal{S}_\mu(M)$ that preserve $\omega$.
\end{thm}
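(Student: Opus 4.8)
The plan is to prove each of the two statements by directly matching the relevant spaces against the definitions of the Tseng--Yau cohomology groups recalled above, using Proposition~\ref{prop:admiss}, Proposition~\ref{prop:symflow}, and the preceding deformation lemma as the three inputs. No genuinely new computation should be required: once the spaces are set up correctly, both identifications are term-by-term comparisons with the quotients defining $H^2_{d+d^\Lambda}(M)$ and $H^2_{dd^\Lambda}(M)$.

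For the first statement, I would begin from the deformation lemma, which identifies the infinitesimal deformations of $\omega$ inside $\mathcal{S}_\mu(M)$ with the 2-forms $\eta$ satisfying $d\eta = d^\Lambda\eta = 0$; this is exactly the numerator ${\rm ker}\, d_2 \cap {\rm ker}\, d^\Lambda_2$ of $H^2_{d+d^\Lambda}(M)$. I would then invoke Proposition~\ref{prop:symflow}, which presents the flow as $\partial_t\omega = dd^\Lambda\beta_{F,\pi}$. Since the linear functions of Section~\ref{sec:admiss} realize every 2-form as a derivative $\beta_{F,\pi}$, the deformations produced by the flow are precisely the 2-forms in $dd^\Lambda\Omega^2(M) = {\rm Im}\, dd^\Lambda_2$. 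Quotienting the former by the latter yields $H^2_{d+d^\Lambda}(M)$ on the nose. The point requiring care is that $dd^\Lambda\beta_{F,\pi}$ depends only on the class of $\beta_{F,\pi}$ in $T^\ast_\pi\mathcal{S}_\mu(M)$: using the anticommutation $d^\Lambda d = -dd^\Lambda$ and $(d^\Lambda)^2=0$ one checks $dd^\Lambda(d\alpha)=0$ and $dd^\Lambda(d^\Lambda\sigma)=0$, so the map descends to the quotient $\Omega^2/(d\Omega^1 + d^\Lambda\Omega^3)$ supplied by Proposition~\ref{prop:admiss}.

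For the second statement, I would start from the identification $T^\ast_\pi\mathcal{S}_\mu(M) \cong \Omega^2(M)/(d\Omega^1(M) + d^\Lambda\Omega^3(M))$ of Proposition~\ref{prop:admiss}. A cotangent element $\beta_{F,\pi}$ yields a symplectic vector field exactly when the induced flow is stationary, that is, when $\mathcal{L}_{V_{F,\pi}}\omega = dd^\Lambda\beta_{F,\pi} = 0$ by Proposition~\ref{prop:symflow}; thus the admissible cotangent vectors preserving $\omega$ are represented by ${\rm ker}\, dd^\Lambda_2$. Since the cotangent space already quotients by $d\Omega^1 + d^\Lambda\Omega^3 = {\rm Im}\, d_1 + {\rm Im}\, d^\Lambda_3$, the distinct such elements form ${\rm ker}\, dd^\Lambda_2 / ({\rm Im}\, d_1 + {\rm Im}\, d^\Lambda_3) = H^2_{dd^\Lambda}(M)$.

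The main obstacle, and where I would spend the most effort, lies in pinning down the surjectivity and realizability inputs that make these quotients honest rather than formal: that every closed, $d^\Lambda$-closed 2-form is tangent to an actual curve in $\mathcal{S}_\mu(M)$ (furnished by the unobstructedness of symplectic deformation theory together with the deformation lemma), and that every 2-form arises as $\beta_{F,\pi}$ for some admissible $F$ (furnished by the linear functions and Proposition~\ref{prop:admiss}). Granting these, the two statements reduce to reading off the numerators and denominators of the Tseng--Yau groups, completing the proof.
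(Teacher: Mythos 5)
Your proposal is correct and takes essentially the same approach as the paper: the paper likewise obtains the first statement by combining the preceding deformation lemma (identifying tangent directions to $\mathcal{S}_\mu(M)$ with $\ker d \cap \ker d^\Lambda$ on 2-forms) with Proposition~\ref{prop:symflow} (flow-induced deformations are exactly ${\rm Im}\, dd^\Lambda$, since linear functions realize every 2-form as a derivative), and the second by combining Propositions~\ref{prop:symflow} and~\ref{prop:admiss}. Your explicit verifications that $dd^\Lambda$ annihilates $d\Omega^1(M)$ and $d^\Lambda\Omega^3(M)$ simply spell out well-definedness points the paper handles in the discussion preceding Proposition~\ref{prop:symflow}.
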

\begin{proof}
The condition for a 2-form $\beta$ to be an infinitesimal deformation of a symplectic structure is $d \beta=0$. A short calculation shows that it preserves the symplectic volume form if and only if $d^\Lambda \beta = 0$. Deformations arising as flow of the bracket  $\{\cdot, \cdot \}_\mu$ are of the form $ d^\Lambda \beta_{F, \pi}$ by Proposition~\ref{prop:symflow}. This yields the first statement. The second follows directly from Proposition~\ref{prop:symflow} and Proposition~\ref{prop:admiss}.
\end{proof}
\begin{rem}
The first part of Theorem~\ref{thm:sym} can be compared to the interpretation of the de Rham comology group $H^{2}(M, \mathbb{R})$ as the space of infinitesimal deformations of a symplectic structure, modulo diffeomorphisms. 
\end{rem}
\begin{rem}
If the symplectic structure defined by $\pi$ satisfies the strong Lefschetz property (equivalently the $dd^\Lambda$ Lemma~\cite{merkulov1998formality}) which states that the map on de Rham Cohomology
$$ H^k(M) \to H^{2n-k} , \quad \alpha \mapsto \omega^{n-k} \wedge \alpha,$$
is an isomorphism for all $k \leq n$. In this case we have (Ref.~\cite{tseng2012cohomology} Proposition 3.13), 
$$ H^k_{d + d^\Lambda}(M) \cong H^{k}(M, \mathbb{R}) ,$$
where $H^{k}$ is the ${\rm k}^{\rm th}$ de Rham cohomology group. 
\end{rem}

\end{document}